\newtheorem{theorem}{Theorem}
\newaliascnt{proposition}{theorem}
\newtheorem{proposition}[proposition]{Proposition}
\newaliascnt{lemma}{theorem}
\newtheorem{lemma}[lemma]{Lemma}
\newaliascnt{corollary}{theorem}
\newtheorem{corollary}[corollary]{Corollary}
\newaliascnt{definition}{theorem}
\newaliascnt{example}{theorem}
\newaliascnt{remark}{theorem}
\newtheorem{remark}[remark]{Remark}
\newcommand\jsd{JSD\@\xspace}
\newcommand\pg{PG\@\xspace}
\newcommand\smc{SMC\@\xspace}
\newcommand\hmm{HMM\@\xspace}
\newcommand\mcmc{MCMC\@\xspace}
\newcommand\pmcmc{PMCMC\@\xspace}
\newcommand{\wrt}{with respect to\@\xspace}
\newcommand{\ie}{i.e.\xspace}
\newcommand{\eg}{e.g.\xspace}
\newcommand{\cf}{cf.\xspace}
\newcommand{\pdf}{probability density function\xspace}
\newcommand\join[2]{\{#1,#2\}}
\DeclareMathOperator\osc{osc}
\renewcommand\mid{\,\vert\,}
\newcommand{\+}{\mathsf{T}}                      
\newcommand{\range}[2]{#1, \, \dots, \, #2}      
\newcommand{\prange}[2]{(#1, \, \dots, \, #2)}   
\newcommand{\crange}[2]{\{#1, \, \dots, \, #2\}}
\newcommand{\rmd}{\mathrm{d}}
\newcommand{\eqsp}{\;}
\newcommand{\eqdef}{\ensuremath{:=}}
\newcommandx\sequence[3][2=t,3=\zset]
\newcommand\card[1]{|#1|} 
\newcommand\I{\mathbb{I}}  
\newcommand\Prb{\mathbb{P}}
\newcommand\h{h} 
\newcommand\rset{\ensuremath{\mathbb{R}}}
\newcommand\nset{\ensuremath{\mathbb{N}}}
\newenvironment{hyp}[1]{
\begin{enumerate}[label=(\textbf{\sf #1}-\arabic*),resume=hyp#1]\begin{sf}}
{\end{sf}\end{enumerate}}
\newcommand\hypref[1]{{\textnormal\textbf{\sf\ref{#1}}}}
\newcommand\T{n}
\newcommand\Np{N}
\newcommand\Cpl{\Psi} 
\newcommand\wf{w}
\newcommand{\ewght}[2]{\omega_{#1}^{#2}}
\newcommand{\epart}[2]{X_{#1}^{#2}}
\newcommandx{\ewghtfunc}[4][1=]{w^{#1}(#3, #4)}
\newcommandx{\ewghtfuncfin}[5][1=]{\check w^{#1}(#3, #4, #5)}
\newcommandx{\kis}[4][1=]{r^{#1}(#3, #4)}
\newcommandx{\kisfin}[5][1=]{\check r^{#1}(#3, #5, #4)}
\newcommand{\Xset}{\ensuremath{\mathsf{X}}}
\newcommand{\Yset}{\ensuremath{\mathsf{Y}}}
\newcommand{\Xinitv}{\mu}
\newcommand\XXset{\Xset^{\T}}
\newcommand\J{\mathcal{J}}
\newcommand\W{\mathcal{W}}
\newcommand\one{\mathbf{1}}
\newcommand\bp{\partial}
\newcommand\si{s}
\newcommand\ui{u}
\newcommand\kernel[3]{#1(#2,#3)}
\newcommand\extended[1]{#1^+} 
\newcommand\compP{\mathcal{P}}
\newcommand\Y{y}
\newcommand\rate{\lambda}
\newcommand\ratetwo{\lambda'}
\newcommand\maxsumW{\beta}
\newcommand\maxW{\gamma}
\newcommand\fixL{L}  
\newcommand\p{p} 
\title{Blocking Strategies and Stability of Particle Gibbs Samplers}
\date{September 28, 2015}
\author{
  Sumeetpal S.\ Singh
  \and
  Fredrik Lindsten \and Eric Moulines
}
\begin{document}
\maketitle

\begin{abstract}
Sampling from the conditional (or posterior) probability distribution of the latent states of a Hidden Markov Model, given the realization of the observed process,
is a non-trivial problem in the context of Markov Chain Monte Carlo.
To do this \citet{AndrieuDH:2010} constructed a Markov kernel which leaves this conditional distribution invariant using a Particle Filter.
From a practitioner's point of view, this Markov kernel attempts to mimic the act of sampling all the latent state variables as one block from the posterior distribution but for models where exact simulation is not possible.
There are some recent theoretical results that establish the uniform ergodicity of this Markov kernel and that the mixing rate does not diminish provided
the number of particles grows at least linearly with the number of latent states in the posterior.
This gives rise to a cost, per application of the kernel, that is quadratic in the number of latent states which
could be prohibitive for long observation sequences. We seek to answer an obvious but important question: is there a different implementation with a cost per-iteration that grows
linearly with the number of latent states, but which is still stable in the sense that its mixing rate does not deteriorate? We address this problem using blocking strategies,  which are easily
parallelizable, and prove stability of the resulting sampler.
\end{abstract}



\section{Introduction}
Let $\{(X_t,Y_t) \in (\Xset,\Yset), t \in \nset_+\}$ be a Markov chain evolving according to a (general state space) hidden Markov model (\hmm).
That is, the \emph{state process} $\sequence{X}[t][\nset_+]$ is
a Markov chain with \emph{state space} $\Xset$, evolving according
to a Markov transition kernel $M$ and with initial distribution $\Xinitv$ (\ie, $\Xinitv$ is the distribution of the state at time $t=1$).
The sequence $\sequence{X}[t][\nset_+]$ is not directly observed and inference needs to be carried out based on the
observations $\sequence{Y}[t][\nset_+]$ only. Conditionally on $\sequence{X}[t][\nset_+]$, the observations $\sequence{Y}[t][\nset_+]$
are independent. The conditional density of $Y_t$ given $X_t = x_t$, with respect to
some dominating measure is denoted by $g(x_t, y_t)$.

We will work under the assumption that a \emph{fixed} sequence of observations
$y_{1:\T} \eqdef \prange{y_1}{y_\T}$ is available, where $\T$ is some final time point. The key object of interest
is then the \emph{joint smoothing distribution}  (\jsd) $\phi(\rmd x_{1:\T})$, which is the probability distribution of $X_{1:\T} \eqdef \prange{ X_1 }{ X_{\T} }$ conditioned on
$Y_{1:\T} = y_{1:\T}$.
%
%
Markov chain Monte Carlo can be used to simulate from the \jsd, for example
the Gibbs scheme that uses Metropolis-Hastings samplers to update the state variables $\{ X_t \}_{t=1}^\T$ one-at-a-time,
with all the other variables kept fixed \citep{CarterK:1994,Fruhwirth-Schnatter:1994}.
However, the typically strong dependencies between consecutive states in the state sequence can cause this method to mix very slowly. 
As a result, this solution is often deemed as inefficient.

However, recent developments in \emph{sequential Monte Carlo} (SMC) methods
have had a significant impact on the practise of \mcmc.
The \smc methodology---which combines sequential importance sampling and resampling--- has long been established as a key technique
for approximating the \jsd in general \hmm{s}, see \eg, \citet{DoucetGA:2000,delmoral:2004,DoucetJ:2011}
 for introductions, applications,
and theoretical results. In a seminal paper by \citet{AndrieuDH:2010}, this key strength of \smc was exploited to construct effective (\smc based) high-dimensional \mcmc kernels,
resulting in so called particle \mcmc (\pmcmc) methods.

We will consider specifically an instance of \pmcmc referred to as \emph{particle Gibbs} (\pg)
\citet{AndrieuDH:2010} (see also \citet{ChopinS:2015}).
The \pg algorithm of \citet{AndrieuDH:2010} defines 
(via an \smc construction) a Markov kernel which leaves the full \jsd invariant. From a practitioner's point of view, \pg
can be regarded as a technique to mimic the behaviour of simulating all the state variables $X_{1:\T}$ as one block from the \jsd for models where exact simulation (from the \jsd) is not possible.

There are some recent theoretical results for the \pg (Markov) kernel $\compP$ to back-up its good observed performance.  \citet{ChopinS:2015}
show it is uniformly ergodic but do not shed light on the dependence of the rate on time $\T$ and particle number $\Np$.
The \pg kernel has been shown to be minorised by the \jsd, i.e. $\compP \geq \textrm{const.}\times \phi$, under weak conditions \citep{LindstenDM:2015,AndrieuLV:2015} (thus also
implying uniform ergodicity). Under stronger (but common) forgetting conditions on the \hmm, an explicit lower bound for the minorising constant,
$ (1-\frac{1}{c(\Np-1) +1})^{\T}$,
has also been established in these works; here, 
$1\geq c>0$ is a model-dependent
constant, $\Np$ is the number of particles (of the \smc sampler) used to contruct the \pg kernel and $\T$ is the number of observations. (See \citet{Kuhlenschmidt:2014} for an alternative proof of this rate, and a central limit theorem, using more routine SMC analysis techniques.) 
While the method is indeed uniformly geometrically ergodic for
any $\Np\geq 2$, the explicit rate reveals that the convergence deteriorates as $\T$ increases if $\Np$ is kept fixed.
This effect,
which is also clearly visible in practice \citep{LindstenS:2013,ChopinS:2015},
is related to the well known path-degeneracy issue of \smc samplers, see \eg, \cite{DoucetJ:2011}.
For the rate not to deteriorate, $\Np$ must increase linearly with $\T$, giving rise to an algorithm that costs $\T^2$ per-iteration (or application) of the \pg kernel, which may be impractical when $\T$ is large. (Note that a \pg kernel implemented with $\Np$ particles has a cost per-iteration of $\Np\T$.)

We seek to answer an obvious but important question: is there a different implementation of the \pg kernel with a cost per-iteration that grows
linearly with $\T$, but which is still stable in the sense that its convergence rate does not deteriorate with $\T$? Secondly, and equally important, is the issue of parallelizing the 
\pg kernel to save on wall-clock, or execution, time. We provide an affirmative answer to the posed question and we use blocking to achieve both aims.

\subsection{Summary of main results and related work}
In the literature \pg is often presented (and invoked) as an exact approximation of a \emph{fully blocked} sampler and
it is contrasted with a single-state Gibbs sampler.
However, there is an intermediate route in-between these two extremes, namely to use \pg as a component of a (partially)
blocked Gibbs sampler. (This possibility
was indeed pointed as a potential extension in the seminal \pmcmc paper \citep[p.~294]{AndrieuDH:2010}.) As we will demonstrate in this paper,
using \pg within a blocked Gibbs sampler can result in very efficient samplers. We now give a \emph{simplified} interpretation of our results.

In light of the convergence properties of the \pg kernel discussed above, the convergence rate of the blocked \pg kernel will now depend on the block size $L$ and not on the
total number of observations $\T$. Thus the particle approximation of the ideal blocked Gibbs sampler does not deteriorate with 
total number of observations $\T$ provided the block sizes themselves do not increase with $\T$.
The main insight which we exploit next is that blocking
can also be used to control the convergence properties of the ideal Gibbs sampler for an \hmm. Specifically, we show that under certain forgetting properties of the \hmm it is possible to select
a blocking scheme, using \emph{overlapping} blocks, which results in a uniform rate of convergence for the ideal Gibbs sampler.
Importantly, we show that that the rate of convergence after $k$ complete sweeps is (see Theorems \ref{thm:mr:contraction}, \ref{thm:idealStructuredRates})
$$
\vert \phi \varphi - \mu \compP_{\mathrm{Ideal}}^k \varphi \vert \leq \rate_{\mathrm{Ideal}}^k \sum_{i=1}^\T \osc_i(\varphi)
$$
where $\compP_{\mathrm{Ideal}}$ is the Markov kernel defined by one complete sweep and  the rate $\rate_{\mathrm{Ideal}}$ improves with increasing the overlap between blocks but is independent of $\T$.
Furthermore, we translate this result to the (more practical) case in which exact sampling
from the block-conditionals is not possible and \pg kernels are used to simulate from each of
these conditionals instead. In this case the rate becomes (see \autoref{thm:simplenonidealrates}) $$\rate_{\mathrm{PG}}=\rate_{\mathrm{Ideal}}+\mathrm{const.} \times \epsilon_\Np$$ where 
$0 \leq \epsilon_\Np \leq 1$ quantifies the effect of departing from the ideal block sampler when using the \pg kernels instead. Specifically,  $\epsilon_\Np \downarrow 0$ as the particle number $\Np$ increases, i.e. as the \pg kernel better approximates the ideal block sampler, but $\epsilon_\Np \uparrow 1$ as the size of the blocks increase.
We analyse different blocking schemes (\autoref{thm:simplenonidealrates}) and discuss in particular how the blocking can be selected to
open up for straightforward parallelization of the sampler.

Our analysis is based on Wasserstein estimates, see e.g.\ recent works by \citet{WangW:2014, RebeschiniH:2014}.
\citet{WangW:2014} study the convergence properties of a Gibbs sampler in high dimensions under a Dobrushin condition.
Our work is in the same vein, but we are in particular interested in \emph{verifying} a related condition for
the case of a blocked Gibbs sampler for an \hmm. Furthermore, we study the convergence of the non-ideal blocked Particle Gibbs
sampler, for which the results by \citet{WangW:2014} do not apply.
\citet{RebeschiniH:2014} generalise the Dobrushin comparison theorem and consider applications in high-dimensional filtering.

It should be noted that refined \pg algorithms, incorporating explicit updates of the particle
ancestry either as part of the forward \smc recursion \citep{LindstenJS:2014} or
in a separate backward recursion \citep{Whiteley:2010,WhiteleyAD:2010}, have been developed.
These modified \pg samplers have empirically been shown to work well with small number of particles
and to be largely robust to $\T$. Nevertheless, to date, no theoretical guarantee for the stability
of these algorithms has been given.
For specific blocking schemes, blocked \pg is straightforwardly parallelizable.
It is worth noting that we can replace our blocked \pg kernels with 
the corresponding \pg kernels that  incorporate updates of the particle
ancestry as discussed above.

The rest of this paper is organised as follows.  Section 2 presents the \hmm , the definition of the \jsd, the blocking schemes and particle implementation of the 
ideal blocked Gibbs sampler. Section 3 presents the main results on the uniform ergodicty and rates of convergence of the various samplers. Section 4 presents the proof of the stability of the blocked \pg sampler.
The proofs of supporting technical results are given in the appendices.

\section{Block Sampling and Particle Gibbs}

\subsection{Hidden Markov models}\label{sec:hmm}
Let set $\Xset$ be the state space of the time- and space-homogeneous \hmm. 
We will work under the assumption that a fixed sequence of observations
$y_{1:\T} \eqdef \prange{y_1}{y_\T}$ is available, where $\T$ is some final time point.
The key object of interest
is then the \jsd, that is the probability distribution of $X_{1:\T}$ conditioned on
$Y_{1:\T} = y_{1:\T}$. This will be the target distribution for the various Monte Carlo sampling
algorithms that we shall consider throughout this work.
With,
$$
 p_\Xinitv(y_{1:\T}) \eqdef \int \Xinitv(\rmd x_1) g(x_1,y_1) \prod_{t=2}^\T \kernel{M}{x_{t-1}}{\rmd x_t} g(x_t,y_t) \eqsp,
$$
denoting the density of the observations $Y_{1:\T}$ (with respect to some dominating measure)
we can write the density of the \jsd as
\begin{align}
  \label{eq:jsd-def}
    \phi(x_{1:\T}) \eqdef \frac{1}{p_\Xinitv(y_{1:\T})} \Xinitv(x_1) g(x_1,y_1) \prod_{t=2}^\T \kernel{m}{x_{t-1}}{x_t} g(x_t,y_t)  \eqsp,
\end{align}
where, by abuse of notation, we use the same symbol $\phi$ both for the \jsd and for its density. 
Note that $\kernel{m}{x_{t-1}}{x_t}$ is the density of $\kernel{M}{x_{t-1}}{\rmd x_t}$
\wrt some $\sigma$-finite dominating measure on $\Xset$.
For notational simplicity, we do not make explicit the dependence of $\phi$ on the fixed observation sequence in the notation
in \eqref{eq:jsd-def}.

We will analyse the stability of our samplers under the following set of strong, but standard, mixing assumptions 
\citep{delmoral:2004,LindstenDM:2015,AndrieuLV:2015}

\begin{hyp}{S}
\item \label{asmp:strong-mixing-m}  There exists a probability measure $\nu$ on $\Xset$ such that 
$\kernel{m}{x_{t-1}}{x_t}$ is the density of $\kernel{M}{x_{t-1}}{\rmd x_t}$ \wrt $\nu$.
Furthermore, there exist positive constants $\sigma_{-}$ and $\sigma_{+}$ and
  an integer $\h \in \nset$ such that
  \begin{enumerate}
  \item \label{asmp:strong-mixing-m-plus} $m(x,x') \leq \sigma_{+}$ for all $x, x' \in\Xset$,
  \item \label{asmp:strong-mixing-m-minus} 
  $\int \nu(dx_{2})\ldots \nu(dx_{h}) \left( \prod_{j=1}^{\h}  m(x_{j}, x_{j+1})  \right)  \geq \sigma_{-}$ for all $x_1, x_{\h+1} \in\Xset$.
  \end{enumerate}
  \item \label{asmp:strong-mixing-g} There exists a constant $\delta \geq 1$ such that for all $y \in \Yset$,
    \(\sup_{x} g(x,y) \leq \delta^{1/\h} \inf_x g(x,y) \eqsp.\)
\end{hyp}

\subsection{Block sampling} \label{sec:gibbs}

Before giving the algorithmic statements for the blocked Gibbs samplers that are analysed in this article,
we introduce some notation that will be frequently used in the sequel.
Let $I\eqdef \crange{1}{\T}$ be the ``index set'' of the latent variables $X_{1:\T}$.
Let $\Xset^{\T}$ be the $\T$-fold Cartesian product of the set~$\Xset$ and let the tuple $x= ( x_{i} : i \in I ) \in \Xset^{\T}$. For $J\subset I$, we then write
$x_{J}\eqdef ( x_{i} : i\in J ) \in \Xset^{|J|}$ (the restricted tuple). We also write $x_{-i}$ as a shorthand for $x_{I\setminus \{i\}}$.
The complement of $J$ in $I$ is denoted by $J^c \eqdef I\setminus J$. Given $y= ( y_{i} : i \in J ) \in \Xset^{|J|}$ and $z= ( z_{i} : i \in J^c ) \in \Xset^{|J^c|}$, we
define $x=\join{y}{z}=\join{z}{y} \in \Xset^{\T}$  to be the tuple such that 
$x_J=y$ and $x_{J^c}=z$.

Let $\phi_{x}^{J}$ denote (a version of) the regular
conditional distribution of the variables $X_{J}$ conditionally on $X_{J^c}=x_{J^c}$
under $\phi$ in \eqref{eq:jsd-def}. 
The Markov property of the \hmm implies that $\phi_x^J$ depends on $x$ only through the \emph{boundary points}
$x_{\bp J}$ where $\bp J$ denotes the set of indices which constitutes the boundary of the set $J$
\begin{align*}
  \bp J \eqdef \{ t \in J^c : t+1\in J \text{ or } t-1 \in J \}  \eqsp.
\end{align*}
%
The Gibbs sampler generates samples from the \jsd $\phi$ by iteratively sampling from
its conditional distributions.
Let $\J \eqdef \crange{J_1}{J_m}$ be a cover of $I$.
A \emph{blocked}, deterministic scan Gibbs sampler proceeds by sampling from the block-conditionals
$\phi_{x}^{J}$, $J \in \J$, in turn in some prespecified order.
More precisely, assuming that we apply the blocks in the order $J_1$, $J_2$, etc. and 
if the initial configuration of the sampler is given by $x' \in \Xset^{\T}$, then the Gibbs sampler output is the Markov
chain $\{X[k],\, k\in\nset\} $ with $X[0] = x'$ and, given
\begin{align} \label{eq:sampledprocess}
  X[lm+k-1] = x \in \Xset^{\T}, \quad l\in\nset,\, k\in\crange{1}{m}  \eqsp,
\end{align}
$X[lm+k] = X'$ where
$X'_{J^c_{k}} = x_{J^c_{k}}$ and we simulate
$X'_{J_{k}} \sim \phi_{x}^{J_{k}}(\cdot)$.
More generally, we can simulate $X'_{J}$ from some kernel $\kernel{ Q^J }{ x }{ \cdot }$ with 
the conditional distribution $\phi_x^J$ as its invariant measure. That is, for any $x\in \Xset^{\T}$
\begin{align}
  \label{eq:gibbs:invariance-property}
  &\int \phi_x^J(\rmd x'_J) \kernel{Q^J}{ \join{ x_{J^c}}{ x'_J }}{A} = \phi_x^J(A), &
  &\text{for all measurable $A\subset \Xset^{\card{J}}$} \eqsp.
\end{align}
Since $\phi_x^J$ depends on $x$ only through the boundary points $x_{\bp J}$,
it is natural to assume that $\kernel{ Q^J }{ x }{ \cdot }$ depends on $x$ only through $x_{J\cup\bp J}$.
For notational simplicity, we  define 
\[
\extended{J} \eqdef J \cup \bp J
\]
for any subset $J \subset I$.
Thus for any $(x,z)\in\Xset^{\T}\times\Xset^{\T}$  
we have,
  $\kernel{Q^J}{x}{A} = \kernel{Q^J}{ \join{ x_{\extended{J}} }{ z_{I\setminus\extended{J}} }}{A}$.
We write $\kernel{Q^J}{x_{\extended{J}}}{\rmd x'_J}$ in place of
$\kernel{Q^J}{x}{\rmd x'_J}$ when wishing to 
 to emphasize the dependence of the kernel on the components in $\extended{J}$ of current configuration $x$.
When we have $\kernel{Q^J}{x}{\rmd x'_J} = \phi_x^J (\rmd x'_J )$
for every $J \in \J$, we refer to the sampler as an \emph{ideal Gibbs sampler}.
It follows that the Markov kernel corresponding to updating the block $J$ is given by,
\begin{numcases}{\kernel{P^J}{x}{\rmd x'} \eqdef}
  \phi_x^J (\rmd x'_J ) \times \delta_{x_{J^c}}(\rmd x_{J^c}') \eqsp, & \text{for the ideal Gibbs kernel,} \label{eq:gibbs:P-def}\\
  \kernel{Q^J}{x_{\extended{J}}}{\rmd x_J'} \times \delta_{x_{J^c}}(\rmd x_{J^c}') \eqsp, & \text{for the non-ideal Gibbs kernel.} \label{eq:gibbs:P-def2}
\end{numcases}
Formally, the subsets in $\J$ can be an arbitrary cover of $I$. However,
there are certain blocking schemes that are likely to be of most practical interest
and these schemes therefore deserve some extra attention. To exemplify this, we consider
the following restrictions on the blocks in $\J$.
\begin{hyp}{B}
\item \label{asmp:algs:ordered_blocks}Each $J \in \J$ is an interval, \ie, $J = \crange{s}{u}$ for some
$1\leq s \leq u \leq \T$.
Furthermore, the blocks $\range{J_1}{J_m}$ are ordered 
in the sense that, for any $1\leq j < k \leq m$, $\min(J_j)<\min(J_k)$ and $\max(J_j) < \max(J_k)$.
\item \label{asmp:algs:overlap}Consecutive blocks may overlap, but non-consecutive blocks do not overlap and
are separated. That is, for $1\leq j < k \leq m$ with $k-j \geq 2$, 
$\max(J_j) < \min(J_k) - 1$.
\end{hyp}
In addition to ordering the blocks according to their minimum element, \hypref{asmp:algs:ordered_blocks} avoids the case where one block is a strict subset of some other block. An illustration of \hypref{asmp:algs:overlap}, which requires that $J_{k-1}$ and $J_{k+1}$ do not cover $J_k$, is shown in Figure \ref{fig:algs:example_blocking}.

The Gibbs sampling scheme that (perhaps) first comes to mind is a systematic sweep
from left to right. Under \hypref{asmp:algs:ordered_blocks}, this 
implies that the kernel corresponding to one \emph{complete sweep} of the Gibbs sampler is given by:
\begin{paragraph}{L-R}
Assume \hypref{asmp:algs:ordered_blocks}.
The Left-to-Right Gibbs kernel is given by $\compP  \eqdef P^{J_{1}} \cdots P^{J_{m}}$. \\
\end{paragraph}%


Assuming both \hypref{asmp:algs:ordered_blocks}~and~\hypref{asmp:algs:overlap},
another blocking scheme that is of practical importance is the one that updates
all the odd-numbered blocks first, then all the even-numbered blocks (or the other way around).
We refer to this scheme as the \emph{Parallel} (PAR) Gibbs sampler:
\begin{paragraph}{PAR} Assume \hypref{asmp:algs:ordered_blocks}--\hypref{asmp:algs:overlap}.
The Parallel Gibbs kernel is given by $\compP \eqdef \compP_\text{odd} \compP_\text{even}$ where
\begin{align*}
  &\begin{cases}
    \compP_{\text{odd}} \eqdef P^{J_1} P^{J_3} \cdots P^{J_m}, \\
    \compP_{\text{even}} \eqdef P^{J_2} P^{J_4} \cdots P^{J_{m-1}},
  \end{cases}
  &&\text{or} &
  &\begin{cases}
    \compP_{\text{odd}} \eqdef P^{J_1} P^{J_3} \cdots P^{J_{m-1}}, \\
    \compP_{\text{even}} \eqdef P^{J_2} P^{J_4} \cdots P^{J_{m}},
  \end{cases}
\end{align*}
for odd/even $m$, respectively. \\
\end{paragraph}%

The reason for why we call this sampler \emph{parallel} is that, under
\hypref{asmp:algs:ordered_blocks}-\hypref{asmp:algs:overlap}, 
 it is possible to update all the odd blocks in parallel, followed by a parallel update of all the
even blocks. This is because two consecutive odd (or even) blocks are separated by at least one element in $I$. 
Hence, if we have a total of $C/2$ processing units, we can simulate from both $\compP_\text{odd}$
and $\compP_\text{even}$ (and thus from $\compP$) in constant time for any number of blocks $m\leq C$.
\autoref{fig:algs:example_blocking} shows an example of a blocking configuration which satisfies
\hypref{asmp:algs:ordered_blocks}-\hypref{asmp:algs:overlap}. This is the typical scenario that we have
in mind for the PAR sampler.

\begin{figure}[ptb]
  \centering
  \tikzstyle{block}=[draw, thick, minimum width=3cm, minimum height = 0.2cm, align=center]

\begin{tikzpicture}[node distance=4cm,auto]
  \node[block] (J1) at (0,0) {$J_1$};
  \node[block,right=.8cm of J1] (J3) {$J_3$};
  \node[block,right=.8cm of J3] (J5) {$J_5$};
  \node[block] (J2) at (2,-0.62) {$J_2$};
  \node[block,right=.8cm of J2] (J4) {$J_4$};
  \draw[thick] (-1.5,-1)--(-1.5,-1.3);
  \draw[thick] (-1.2,-1)--(-1.2,-1.3);
  \draw[thick] (8.8,-1)--(8.8,-1.3);
  \draw[thick] (9.1,-1)--(9.1,-1.3);
  \draw[thick] (-1.5,-1.15)--(9.1,-1.15);
  \node[anchor=west] at (-1.55,-1.6) {$1\; \cdots$};
  \node[anchor=east] at (9.16,-1.6) {$\cdots\; n$};
\end{tikzpicture}%
  \vspace{-2ex}
  \caption{Example block configuration.}
  \label{fig:algs:example_blocking}
\end{figure}
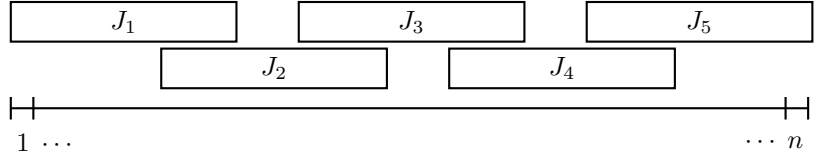

Reversibility in \mcmc is an important consideration. In Section \ref{sec:algs:pg} we use \pg to define the kernels
$\kernel{Q^J}{x_{\extended{J}}}{\rmd x_J'}$ for each $J$ and the \pg kernel is known to be reversible \citep{ChopinS:2015}.
As such it is simple to define reversible block samplers using the following simple fact.

\begin{lemma} 
\label{lem:reverse}
  Let $\J = \crange{J_1}{J_m}$ be an arbitrary cover of $I$ and, for each $J \in \J$, let
  $\kernel{P^J}{x}{\rmd x'} = \kernel{Q^J}{x_{\extended{J}}}{\rmd x_J'} \times \delta_{x_{J^c}}(\rmd x_{J^c}')$
  be the (possibly non-ideal) Gibbs kernel updating block $J$ only. Assume that $Q^J$
  is reversible \wrt to $\phi_x^J$ for all $J \in \J$. Then,
  \begin{align*}
    \phi(\rmd x)P^{J_1}\cdots P^{J_m}(x,\rmd x') = \phi(\rmd x') P^{J_m} \cdots P^{J_1}(x',\rmd x).
  \end{align*}
\end{lemma}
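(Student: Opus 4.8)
The plan is to reduce the claim to two facts. The first is that each single-block kernel $P^J$ is itself reversible with respect to the full \jsd $\phi$, \ie\ that $\phi(\rmd x) P^J(x,\rmd x') = \phi(\rmd x') P^J(x',\rmd x)$ as measures on $\Xset^\T \times \Xset^\T$. The second is a purely algebraic ``composition'' fact: if $P^{J_1}, \dots, P^{J_m}$ are each reversible with respect to a common measure $\phi$, then the composition reverses order, \ie\ $\phi(\rmd x) P^{J_1}\cdots P^{J_m}(x, \rmd x') = \phi(\rmd x') P^{J_m}\cdots P^{J_1}(x', \rmd x)$. The lemma then follows immediately by combining the two.

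For the first fact, I would fix $J \in \J$ and write the \jsd in the factorised form $\phi(\rmd x) = \phi(\rmd x_{J^c})\, \phi_x^J(\rmd x_J)$, where $\phi(\rmd x_{J^c})$ denotes the marginal of $\phi$ on the complement and $\phi_x^J$ the regular conditional. Substituting into $\phi(\rmd x) P^J(x, \rmd x') = \phi(\rmd x_{J^c})\, \phi_x^J(\rmd x_J)\, Q^J(x_{\extended{J}}, \rmd x'_J)\, \delta_{x_{J^c}}(\rmd x'_{J^c})$ and invoking the hypothesised reversibility of $Q^J$ \wrt $\phi_x^J$ --- applied with the complement coordinates $x_{J^c}$ (hence the boundary $x_{\bp J}$) held fixed --- swaps the roles of $x_J$ and $x'_J$ in the conditional/kernel pair. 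The key bookkeeping step is that the Dirac factor $\delta_{x_{J^c}}(\rmd x'_{J^c})$ forces $x'_{J^c} = x_{J^c}$, so in particular $x_{\bp J} = x'_{\bp J}$; since both $\phi_x^J$ and $Q^J$ depend on the configuration only through these boundary coordinates, the conditional and kernel evaluated at $x$ and at $x'$ coincide on the relevant support. After recombining $\phi(\rmd x'_{J^c})\, \phi_{x'}^J(\rmd x'_J) = \phi(\rmd x')$ and using the symmetry $\delta_{x_{J^c}}(\rmd x'_{J^c}) = \delta_{x'_{J^c}}(\rmd x_{J^c})$, one recovers $\phi(\rmd x') P^J(x', \rmd x)$, establishing reversibility of $P^J$.

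For the second fact, I would argue by induction on $m$, the $m=1$ case being the reversibility of $P^{J_1}$ just established. For the inductive step, writing $P^{J_1}\cdots P^{J_m} = (P^{J_1}\cdots P^{J_{m-1}}) P^{J_m}$ and inserting the intermediate variable $z$, I would first apply the induction hypothesis to the prefix $P^{J_1}\cdots P^{J_{m-1}}$ to turn $\phi(\rmd x)(P^{J_1}\cdots P^{J_{m-1}})(x, \rmd z)$ into $\phi(\rmd z)(P^{J_{m-1}}\cdots P^{J_1})(z, \rmd x)$, and then apply reversibility of $P^{J_m}$ to the factor $\phi(\rmd z)P^{J_m}(z, \rmd x')$, yielding $\phi(\rmd x') P^{J_m}(x', \rmd z)$; integrating out $z$ then assembles $\phi(\rmd x')(P^{J_m}\cdots P^{J_1})(x', \rmd x)$, as required.

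I expect the main obstacle to lie entirely in the first fact: namely, making the informal manipulations above measure-theoretically rigorous, in particular justifying the factorisation of $\phi$ into marginal-times-conditional and handling the Dirac mass on the complement coordinates carefully (the product of a kernel in $x'_J$ with $\delta_{x_{J^c}}(\rmd x'_{J^c})$ must be read as a kernel on $\Xset^\T$, and the Fubini-type interchanges need $\phi_x^J$ to be a genuine regular conditional distribution with the stated boundary dependence). The composition argument is routine once reversibility of the individual blocks is in hand.
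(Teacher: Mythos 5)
Your proposal is correct, and in fact the paper offers nothing to compare it against: Lemma~1 is stated as a ``simple fact'' with no proof given anywhere in the text or appendices. Your two-step argument supplies exactly the standard reasoning the authors are implicitly relying on: first, each block kernel $P^J$ is reversible with respect to the full \jsd $\phi$, which follows from the disintegration $\phi(\rmd x) = \phi(\rmd x_{J^c})\,\phi_x^J(\rmd x_J)$, the hypothesised reversibility of $Q^J$ \wrt $\phi_x^J$ with the boundary held fixed, and the observation that the Dirac factor $\delta_{x_{J^c}}(\rmd x'_{J^c})$ pins $x'_{\bp J} = x_{\bp J}$ so that conditional and kernel may be re-indexed by $x'$ and the factorisation reassembled as $\phi(\rmd x')P^J(x',\rmd x)$; second, the inductive order-reversal identity for compositions of kernels each reversible \wrt a common measure, which is routine once one works with the joint measure on $(x,z,x')$ and integrates out the intermediate variable. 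Both steps are sound as you outline them, and your closing caveat is well placed: the only genuine content is the measure-theoretic bookkeeping in the first step, which the paper's standing assumption that $\phi_x^J$ is a regular conditional distribution depending on $x$ only through $x_{\bp J}$ makes unproblematic.
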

For example,
for the PAR scheme, the kernel $\frac{1}{2}(\compP_{\text{odd}} \compP_{\text{even}} + \compP_{\text{even}} \compP_{\text{odd}} )$
is reversible.
\autoref{lem:reverse} can be used to define other reversible samplers.

\subsection{Particle Gibbs}\label{sec:algs:pg}
Our primary motivation for studying the convergence properties of blocked Gibbs samplers
for general \hmm{s} is the development of \pmcmc. These methods provide systematic and efficient
Markov kernels which can be used to simulate from the block-conditionals $\phi_x^J$ for ``reasonably large'' blocks,
making the blocking strategy practically interesting.
In particular, we will make use of the \pg sampler \citep{AndrieuDH:2010}
to define $Q^J$ and hence the non-ideal Gibbs kernel in~\eqref{eq:gibbs:P-def}.
The \pg sampler of \citet{AndrieuDH:2010} is a Markov kernel that preserves the invariance of the full \jsd.
In this section we briefly review the \pg sampler
and discuss how the standard \pg algorithm needs to be modified (see Algorithm \ref{alg:cpf}) when targeting one of the
block-conditionals $\phi_x^J$ instead of the full joint smoothing distribution $\phi$. 
(Although not detailed as in Algorithm \ref{alg:cpf}, blocking was mentioned by \citet[page~294]{AndrieuDH:2010} as a possible extension
of their original \pg sampler. However, there was no discussion or mention of the greatly improved stability of the sampler
which is our main interest here.)

The \pg sampler can be used to construct a uniformly ergodic Markov kernel on $\Xset^{\card{J}}$
which leaves $\phi_x^J$ invariant. This construction is based on \smc.
 We denote the \pg Markov kernel for block $J$ by $Q^J_\Np$, where $\Np$ denotes
a precision parameter; specifically the number of particles used in the underlying \smc sampler.
We work under assumption \hypref{asmp:algs:ordered_blocks}, \ie that $J$ is an interval.
Furthermore, for the sake of illustration,
assume that $J$ is an internal block, \ie, $J = \crange{\si}{\ui}$ for $\si > 1$ and $\ui < \T$.
Obvious modifications to the algorithmic statement are needed for $\si = 1$ and/or $\ui = \T$.

The conditional \pdf of $X_{\si:\ui}$, given $X_{\si-1} = x_{\si-1}$, $X_{\ui+1} = x_{\ui+1}$, and $Y_{\si:t} = y_{\si:t}$ is
\begin{align}
  \phi_{x}^{\si:u}(x_{\si:u})  
  = \frac{1}{p(y_{\si:u}\mid x_{\si-1}, x_{\ui+1})}
  \left\{ \prod_{j=\si}^u  m(x_{j-1}, x_j) g(x_j, y_j) \right\} m(x_u, x_{u+1}).
\end{align}
To aid the description of the \pg kernel, we define for $t \in J$
the intermediate probability density
\begin{align}
  \pi^{\si:t}_{x_{\si-1}}(x_{\si:t}) \eqdef p(x_{\si:t} \mid x_{\si-1}, y_{\si:t})
  = \frac{1}{p(y_{\si:t}\mid x_{\si-1})}
  \prod_{j=\si}^t  m(x_{j-1}, x_j) g(x_j, y_j),
\end{align}
which is the conditional \pdf of $X_{\si:t}$ given $X_{\si-1} = x_{\si-1}$ and $Y_{\si:t} = y_{\si:t}$.
It follows that the block-conditional density is given by
\begin{align}
  \label{eq:particle-actual-target}
  \phi_{x}^{\si:u}(x_{\si:u}) \propto  \pi^{\si:u}_{x_{\si-1}}(x_{\si:u}) m(x_u, x_{u+1}) . 
\end{align}

The sampling procedure used in the \pg sampler is reminiscent of a standard \smc sampler,
see \eg \citet{DoucetGA:2000,delmoral:2004,DoucetJ:2011,CappeMR:2005}
and the references therein. Here we review a basic method, though it should be noted that the \pg sampler
can be generalised to more advanced methods, see \citet{AndrieuDH:2010,ChopinS:2015}.
As in a standard \smc sampler, the \pg algorithm approximates the sequence of ``target distributions''
$\{\pi^{s:t}_{x_{s-1}},\, t = \range{\si}{\ui} \}$ sequentially by collections of weighted particles.
The key difference between \pg and standard \smc is that, in the former, one particle is set
deterministically according to the current configuration of the sampler.
Let this fixed particle/state trajectory (also called the reference trajectory) be denoted by $x^\star \in \Xset^{\T}$. 
Then, to simulate from the \pg kernel $\kernel{Q_N^J}{x^\star_{\extended{J}}}{\rmd x_J'}$, we proceed as follows.

Initially, we simulate particles $X_{\si}^i \sim r_\si(x_{\si-1}^\star, \cdot)$, $i = \range{1}{\Np-1}$ independently from some proposal density $r_\si(x_{\si-1}, x_{\si})$.
The proposal density may depend on the fixed observation sequence, but we omit
the explicit dependence in the notation for brevity.
Note that all the particles $\{X_{\si}^i\}_{i=1}^\Np$ share
a common ancestor at time $\si-1$, namely the fixed boundary point $x_{\si-1}^\star$.
Note also the we simulate only $\Np-1$ particles in this way.
The $\Np$th particle is set deterministically according to the current configuration: $X_\si^\Np = x_\si^\star$.

To account for the discrepancy between the proposal density and the target density $\pi_{x_{s-1}^\star}^\si$,
importance weights are computed in the usual way: $\omega_{\si}^i = \wf_{\si}(x_{\si-1}^\star, X_{\si}^i)$ for
$i = \range{1}{\Np}$, where the weight function is given by
\begin{align}
  \label{eq:particle-weight-t}
  \wf_{t}(x_{t-1}, x_t) 
  \eqdef
  \frac{g(x_t, y_t) m(x_{t-1}, x_t)}{r_t(x_{t-1}, x_t)} \eqsp.
\end{align}
The weighted particles $\{ (X_\si^i,  \omega_\si^i) \}_{i=1}^\Np$ provide an approximation
of the target distribution $\pi_{x_{\si-1}^\star}^\si$, in the sense that
\begin{align*}
  \widehat \pi_{x_{\si-1}^\star}^\si(f) \eqdef \sum_{i=1}^\Np \frac{ \ewght{\si}{i} }{ \sum_{\ell=1}^N \ewght{\si}{\ell} } f(X_{\si}^i)
\end{align*}
is an estimator of $\int f(x) \pi_{x_{\si-1}^\star}^\si(x)\rmd x$ for any measurable function $f: \Xset \to \rset$.

We proceed inductively.
Assume that we have at hand a weighted sample $\{ (X_{\si:t-1}^i, \ewght{t-1}{i} ) \}_{i=1}^\Np$
approximating the target distribution $\pi_{x_{\si-1}^\star}^{\si:t-1}$ at time $t-1$. This weighted sample is then propagated sequentially \emph{forward in time}. This is done by sampling, for each particle $i \in \crange{1}{\Np-1}$, an \emph{ancestor index} $A_t^i$ with (conditional) probability
\begin{equation}
  \label{eq:ancestor-t}
  \Prb( A_t^i = j )
  = \frac{ \ewght{t-1}{j} }{ \sum_{\ell=1}^N \ewght{t-1}{\ell}} \eqsp, \quad j \in \{1,\dots,N\} \eqsp.
\end{equation}
Given the ancestor, a new particle position is sampled as
$X_{t}^{i} \sim r_t(X_{t-1}^{A_t^i}, \cdot)$,
where $r_t(x_{t-1}, x_t)$ is a proposal density on $\Xset$. Again,
note that we only generate $\Np-1$ particles in this way. The $\Np$th particle and its ancestor index are set
deterministically as $X_t^\Np = x_t^\star$ and $A_t^\Np = \Np$.
The particle trajectories (\ie, the ancestral paths of the particles $X_{t}^{i}$, $i \in \crange{1}{\Np}$)
are constructed sequentially by associating the current particle $X_{t}^i$ with the particle trajectory of its ancestor:
\(
\epart{\si:t}{i} \eqdef (  \epart{\si:t-1}{A_t^i}, \epart{t}{i} ) .
\)
It follows by construction that the $\Np$th particle trajectory coincides with the current configuration:
$\epart{\si:t}{\Np} = x_{\si:t}^\star$ for any $t\in J$.

Finally, the particles are assigned importance weights.
For $t < \ui$, we compute $\omega_{t}^i = \wf_{t}(x_{t-1}^\star, X_{t}^i)$ for
$i = \range{1}{\Np}$, where the weight function is given by \eqref{eq:particle-weight-t}.
At the final iteration of block $J$, we need to take into account the fact that the target distribution
is $\phi_{x^\star}^{\si:\ui}$ and not $\pi_{x_{\si-1}^\star}^{\si:\ui}$; the two being related according to \eqref{eq:particle-actual-target}.
We can view the fixed boundary state $x_{\ui+1}^\star$ as an ``extra observation'' and,
consequently, the weights are computed according to
\begin{align}
  \label{eq:particle-weight-t}
  \ewght{\ui}{i}= 
  \frac{ m(\epart{\ui}{i}, x_{\ui+1}^\star) g(\epart{\ui}{i}, y_\ui) m(\epart{\ui-1}{A_\ui^i}, \epart{\ui}{i}) }{r_\ui (\epart{\ui-1}{A_\ui^i}, \epart{\ui}{i}) }
  =  m(\epart{\ui}{i}, x_{\ui+1}^\star) \wf (\epart{\ui-1}{A_\ui^i}, \epart{\ui}{i})
  \eqsp.
\end{align}
Similarly to the fact that the proposal densities are allowed to depend on the fixed observations $y_{\si:\ui}$,
it is possible to let the final proposal density $r_u$ depend on the fixed boundary point $x_{\ui+1}^\star$,
but again we do not make this explicit in the notation for simplicity.


After a complete pass of the above procedure, the current configuration of the sampler
is updated by replacing states $x_J^\star$ (recall that $J = \si:\ui$) by a draw from the particle approximation of $\phi_{x^\star}^{J}$.
That is, $\epart{J}{\prime}$ is sampled from among the
particle trajectories at time $\ui$,
with probabilities given by their importance weights, \ie,
\begin{equation}
  \label{eq:probability-selection}
  \Prb( \epart{J}{\prime} = \epart{J}{i}  )
  = \frac{\ewght{\ui}{i}}{\sum_{\ell=1}^N \ewght{\ui}{\ell}} \eqsp, \quad i \in \{1,\dots,N\} \eqsp,
\end{equation}
and $x^\star \gets \join{ x^\star_{J^c} }{ \epart{J}{\prime} }$ is taken as the new configuration of the sampler.
The block \pg sampling procedure is summarized in Algorithm~\ref{alg:cpf}.
Note that the procedure associates each $x^\star_{\extended{J}}$ with a probability distribution
on $\Xset^{\card{J}}$; this is the \pg kernel $Q_N^J$.
As shown by \citet{AndrieuDH:2010}, the conditioning on a reference trajectory implies that the \pg kernel leaves the
conditional distribution $\phi_x^J$ invariant in the sense of \eqref{eq:gibbs:invariance-property}.
Quite remarkably, this invariance property holds for any $N\geq 1$ (though, $\Np \geq 2$ is required
for the kernel to be ergodic; see \citet{AndrieuDH:2010,LindstenDM:2015,AndrieuLV:2015}).

{\def\baselinestretch{1.25}
\begin{algorithm}[ptb]
  \caption{\pg sampler for $\phi_x^J$ for $J = \crange{\si}{\ui}$ (non-boundary block)}
  \label{alg:cpf}
  \begin{algorithmic}[1]
    \REQUIRE Observations $\Y_J$, fixed boundary states $x^\star_{\bp{J}}$, and reference states $x_{J}^\star$.
    \ENSURE Draw from the \pg Markov kernel 
    $\kernel{Q^J_\Np}{x^\star_{\extended{J}}}{\rmd x_J^\prime}$.
    \STATE Draw $ \epart{\si}{i} \sim r_\si( x^\star_{\si-1}, \cdot)$ for $i = \range{1}{\Np-1}$
    and set $\epart{\si}{\Np} = x^\star_{\si}$.
    \STATE Set $\ewght{\si}{i} = \wf_{\si}(x^\star_{\si-1}, \epart{\si}{i})$ for $i = \range{1}{\Np}$.
    \FOR{$t = \si+1$ \TO $\ui$}
    \STATE Draw $A_t^{i}$ with $ \Prb ( A_t^{i} = j ) = \ewght{t-1}{j} / \sum_{\ell=1}^{\Np} \ewght{t-1}{\ell}$,
    for  $i =\range{1}{\Np-1}$.%
    \STATE Draw $ \epart{t}{i} \sim r_t(\epart{t-1}{A_t^{i}}, \cdot)$ for $i = \range{1}{\Np-1}$.
    \STATE Set $\epart{t}{\Np} = x^\star_{t}$ and $A_t^{\Np} = \Np$.
    \IF{$t < u$}
    \STATE Set $\ewght{t}{i} = \wf_t( \epart{t-1}{A_t^{i}}, \epart{t}{i})$ for $i = \range{1}{\Np}$.
    \ELSE
    \STATE Set $\ewght{\ui}{i} = m(\epart{\ui}{i}, x^\star_{\ui+1}) \wf_\ui(\epart{\ui-1}{A_\ui^{i}}, \epart{\ui}{i})$ for $i = \range{1}{\Np}$.
    \ENDIF
    \STATE Set $\epart{\si:t}{i} = (\epart{\si:t-1}{A_t^{i}}, \epart{t}{i})$ for $i = \range{1}{\Np}$.
    \ENDFOR
    \STATE Draw $K$ with $ \Prb(K = j) = \ewght{\ui}{j} / \sum_{\ell=1}^{\Np} \ewght{\ui}{\ell}$, $j \in \{1,\dots,\Np\}$.%
    \RETURN $X_{J}^\prime \eqdef X_{J}^{K}$.
  \end{algorithmic}
\end{algorithm}
\def\baselinestretch{1.0}
}

Empirically, it has been found that the mixing of the \pg kernel can be improved significantly by
updating the ancestor indices $A_t^N$, for $t \in \{1,\dots,T\}$, either as part of a separate backward recursion \citep{WhiteleyAD:2010},
or in the the forward recursion
\citep{LindstenJS:2014} itself. 
Although we do not elaborate on the use of these modified \pg algorithms in this work,
the stability results of the blocked \pg sampler presented in the subsequent sections also hold when the \pg kernel
is replaced by one of these modified algorithms (which might result in better empirical performance).

\section{Main Results:  Convergence of the Block Samplers}\label{sec:mainresults}
In this section we state the main convergence results for the blocked Gibbs samplers detailed
in Section~\ref{sec:gibbs}. After introducing our notation and stating some known preliminary results in Section~\ref{sec:mainresults:preliminaries},
we start in Section~\ref{sec:mainresults:wasserstein} by deriving Wasserstein estimates for the (ideal) blocked Gibbs kernels.
We then investigate in Sections~\ref{sec:mainresults:ideal} and \ref{sec:mainresults:pgibbs} how these estimates result in contraction
rates for the ideal and Particle Gibbs block samplers, respectively.


\subsection{Preliminaries, notation, and definitions}\label{sec:mainresults:preliminaries}
For a function $f : \Xset^{\T} \mapsto \rset$, the oscillation of $f$ \wrt\ the
$i$-th coordinate is denoted by
\begin{align}
  \label{eq:bkg:def-lipschitz}
  \osc_{i}(f)=\sup_{\substack{ x,y \in \Xset^{\T}\\ x_{-i}=y_{-i} }} |f(x)-f(y)| \eqsp.
\end{align} while $\osc(f)=\sup_{x, z} |f(x)-f(z)|$ denotes the total oscillation of $f$.
Note that 
\begin{align}
  \label{eq:bkg:bound-osc}
  f(x)-f(z) \leq \sum_{i\in I} \osc_i(f) \I_{[x_i \neq z_i]}.
\end{align}
For a matrix $A$, recall that 
$\| A\| _{\infty}=\max_{i}\sum_{j}|a_{i,j}| = \max_{i}[A\one]_i$ where the last equality holds only if all elements are non-negative. 
The norm is sub-multiplicative, i.e. $\| AB\| _{\infty} \leq\| A\| _{\infty} \| B\| _{\infty} $.
%
%
%
Let $\mu$ and $\nu$ be two probability measures on $\Xset$. With $\Psi$ being a probability measure on $\Xset \times \Xset$
we say that $\Psi$ is a \emph{coupling} of $\mu$ and $\nu$ if $\int \Cpl(\cdot, \rmd x) = \mu(\cdot)$  and $\int \Cpl(\rmd x, \cdot) = \nu(\cdot)$.

We review some well-known techniques for the analysis of Markov chains (see, \eg, \citet{Follmer:1982}). Let $P$ be a Markov kernel on $\XXset$.
The matrix $W$ is a \emph{Wasserstein matrix} for $P$ if for any function $f$ of finite oscillation,
\begin{align}
  \label{eq:defn_wasser}
  \osc_j(Pf) &\leq \sum_{i\in I} \osc_i(f) W_{ij},  & &\text{for all $j \in I$} \eqsp.
\end{align}
If $P$ and $Q$ are two Markov kernels with Wasserstein matrices $V$ and $W$, respectively, then
$WV$ is a Wasserstein matrix for the composite kernel $PQ$.
%

The convergence rate of an \mcmc procedure can be characterised
in terms of a corresponding Wasserstein matrix for the Markov transition kernel, through the following (well-known) result.
For any
probability distributions $\mu$ and $\nu$ and any function $f$ with finite oscillation, we have
\begin{align}
  |\mu P^k f - \nu P^k f| &\leq \sum_{i,j\in I}  \osc_i(f) W_{ij}^k \Cpl(X_j \neq Z_j) \nonumber \\
 & \leq  \left( \sum_{i\in I} \osc_i(f) \right) \| W \|_{\infty}^k \max_{j \in I} \Cpl(X_j \neq Z_j) 
  \label{eq:wassergivescontraction}
\end{align}
for any $k \geq 1$ and for any coupling $\Cpl$ of $\mu$ and $\nu$.
(Note that $\Cpl(X_j \neq Z_j) = \int \Cpl(\rmd x, \rmd z) \I_{[x_j \neq z_j]}$ is the probability of not coupling element $j$ under the coupling $\Cpl$.) 
This result can be verified by using \eqref{eq:bkg:bound-osc} and iterating the inequality \eqref{eq:defn_wasser}.
It follows from \eqref{eq:wassergivescontraction} that $\| W \|_{\infty} < 1$ implies a geometric rate of contraction of the kernel $P$.




\subsection{Wasserstein estimates for the ideal block sampler}\label{sec:mainresults:wasserstein}
Our convergence results, both for the ideal and for the \pg samplers, rely on constructions of Wasserstein matrices for the \emph{ideal} Gibbs kernels.
Indeed, as we shall see in Section~\ref{sec:mainresults:pgibbs}, the \pg block sampler can be viewed
as an $\epsilon$-perturbation of the ideal block sampler, and this is exploited in the convergence analysis.
Let $P^J$ denote the ideal Gibbs kernel that updates block $J$ only, as defined in \eqref{eq:gibbs:P-def},
and let $W^J$ be a Wasserstein matrix for $P^J$. The following lemma reveals the structure of $W^J$.

\begin{lemma}\label{lem:structure-W-general}
  A Wasserstein matrix for the ideal Gibbs kernel updating block $J$ can be chosen to satisfy,
  \begin{align}
    \label{eq:W-What-structure}
    W_{i,j}^J  &=
    \begin{cases}
      \I_{[i=j]}, & i \in J^c, \\
      0, & i \in J,\, j\in I\setminus \partial J, \\
      \times, & i \in J,\, j\in\partial J,
    \end{cases}
  \end{align}
  where  $\times$ denotes elements which are in general in the interval $[0,1]$. 
\end{lemma}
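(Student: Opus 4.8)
The plan is to verify the defining inequality \eqref{eq:defn_wasser} one column $j\in I$ at a time, reading off the claimed entries as I go. The two facts I would lean on are that the ideal kernel in \eqref{eq:gibbs:P-def} leaves the coordinates in $J^c$ fixed and that $\phi_x^J$ depends on $x$ only through the boundary $x_{\partial J}$. Writing the updated function as
$$(P^J f)(x) = \int \phi_x^J(\rmd x'_J)\, f(\join{x_{J^c}}{x'_J}) \eqsp,$$
I observe that the right-hand side depends on $x$ only through $x_{J^c}$ (directly in the argument of $f$, and through $\phi_x^J$ via $x_{\partial J}\subseteq J^c$). Hence for $j\in J$ the map $x\mapsto (P^Jf)(x)$ is constant in $x_j$, so $\osc_j(P^Jf)=0$ and the $j$-th column can be taken identically zero; this matches the stated structure, since $j\in J\subseteq I\setminus\partial J$ forces $W_{ij}=0$ for $i\in J$ and $W_{ij}=\I_{[i=j]}=0$ for $i\in J^c$.

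For a column $j\in J^c$ I would fix $x,y$ with $x_{-j}=y_{-j}$ and estimate the increment through a coupling $\Gamma$ of $\phi_x^J$ and $\phi_y^J$, so that
$$(P^Jf)(x)-(P^Jf)(y)=\int \Gamma(\rmd x'_J,\rmd y'_J)\,\big[f(\join{x_{J^c}}{x'_J})-f(\join{y_{J^c}}{y'_J})\big] \eqsp.$$
The two arguments of $f$ differ only in coordinate $j$ (within $J^c$) and, possibly, in coordinates $i\in J$; applying \eqref{eq:bkg:bound-osc} and using $x_j\neq y_j$ then gives
$$(P^Jf)(x)-(P^Jf)(y)\leq \osc_j(f)+\sum_{i\in J}\osc_i(f)\,\Gamma(x'_i\neq y'_i) \eqsp.$$
Choosing $W_{jj}=1$ and $W_{ij}=0$ for $i\in J^c\setminus\{j\}$, and setting $W_{ij}=\sup \Gamma(x'_i\neq y'_i)\in[0,1]$ for $i\in J$ (the supremum taken over all admissible pairs $x,y$, for any fixed selection of couplings), then delivers $\osc_j(P^Jf)\le\sum_i\osc_i(f)W_{ij}$, with entries in $[0,1]$ as required.

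Finally I would check that the $J$-rows vanish off the boundary. If $j\in J^c\setminus\partial J$, then $x_{\partial J}=y_{\partial J}$, so $\phi_x^J=\phi_y^J$ and I may take $\Gamma$ to be the diagonal coupling; this makes $\Gamma(x'_i\neq y'_i)=0$ for every $i\in J$, leaving only $W_{jj}=1$, exactly the middle case of the claim. The remaining columns $j\in\partial J$ produce the $\times$ entries in rows $i\in J$. I expect this boundary case to be the only real obstacle: there the conditionals $\phi_x^J$ and $\phi_y^J$ genuinely differ, so the diagonal coupling is unavailable and one must commit to a nontrivial coupling. Since the lemma only asserts membership in $[0,1]$, any family of couplings closes the argument; the sharper, quantitative control of these $\times$ entries under \hypref{asmp:strong-mixing-m}--\hypref{asmp:strong-mixing-g} is what the subsequent refined estimates are designed to supply.
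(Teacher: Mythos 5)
Your proof is correct and follows essentially the same route as the paper: a coupling of $\phi_x^J$ and $\phi_y^J$ combined with the bound \eqref{eq:bkg:bound-osc}, with the diagonal (perfect) coupling available precisely when $j\notin\partial J$ because $\phi_x^J$ depends on $x$ only through $x_{\partial J}$. The only cosmetic difference is that you dispose of the columns $j\in J$ up front by noting $P^Jf$ is constant in $x_j$, whereas the paper absorbs this case into the same coupling computation; the substance is identical.
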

The interpretation of components that are either 0 or 1 is noteworthy as these cannot be improved further. (Recall, the smaller the row sum of the Wasserstein matrix the better.) For example, the lemma states that when the
states $x$ and $z$
differ in a single component, say $x_j \neq z_j$, and if $j \in J$, then this error is not propogated when computing the difference   $P^Jf(x)-P^Jf(z)$. In this sense the given Wasserstein matrices are the `element-wise minimal' ones.


\begin{proof}[Proof (\autoref{lem:structure-W-general})]
From \eqref{eq:gibbs:P-def} we have
\[
\kernel{P^J}{x}{\rmd x'} =
\phi_x^J(\rmd x_J')\delta_{x_{J^c}}(\rmd x_{J^c}')
 \eqsp.
\]
A candidate Wasserstein matrix $W^{J}$
can be found via a coupling argument.
For any pair $(x,z)$ such that $x_{-j}=z_{-j}$ and $x_{j}\neq z_{j}$,
let $\Cpl_{j,x,z}^{J}$ be a coupling of
$\phi^J_x$ and $\phi_z^J$.
Consider now 
\begin{align*}
| P^Jf(x)-P^Jf(z) | &\leq \int \Cpl_{j,x,z}^{J}(\rmd x_{J}',\rmd z_{J}')\delta_{x_{J^c}}(\rmd x'_{J^c})\delta_{z_{J^c}}(\rmd z'_{J^c}) | f(x')-f(z')| \\
&\leq \sum_{i\in I}\osc_{i}(f)\int \Cpl_{j,x,z}^{J}(\rmd x_{J}',\rmd z_{J}')\delta_{x_{J^c}}(\rmd x'_{J^c})\delta_{z_{J^c}}(\rmd z'_{J^c}) \I_{[x_{i}' \neq z_{i}']}\\
&= \sum_{i\in J}\osc_{i}(f)\int \Cpl_{j,x,z}^{J}(\rmd x_{i}',\rmd z_{i}') \I_{[x_{i}'\neq z_{i}']} + \sum_{i\in J^c}\osc_{i}(f) \I_{[x_{i}\neq z_{i}]}\\
&= \sum_{i\in J}\osc_{i}(f)\Cpl_{j,x,z}^{J}( X_{i}'\neq Z_{i}') + \mathbb{I}_{[j\in J^c]}\osc_{j}(f) \eqsp,
\end{align*}
where the second line follows from \eqref{eq:bkg:bound-osc} and the last line follows since $x_{-j}=z_{-j}$, \ie,
at most one term from the second sum in the penultimate line will be non-zero.

Thus, for $i\in J$ we can set
\begin{align}
  \label{eq:BCW:WJdefn}
  W_{i,j}^{J} \eqdef
  \!\!
  \sup_{
    {\scriptsize \begin{array}{c}
        x,z\in \Xset^{\T} \\
        x_{-j}=z_{-j} 
      \end{array}}} 
  \!\!
  \Cpl_{j,x,z}^{J}(X'_i \neq Z'_i).
\end{align}
and for $i\notin J$, $ W_{i,j}^{J} \eqdef \I_{[i=j\in J^c]}$.
Furthermore, since $\phi_x^J$ depends on $x$ only through the boundary points $x_{\bp{J}}$,
it follows that for $j \notin \bp{J}$ the coupling $\Cpl_j$ can be made perfect:
$\Cpl_{j,x,z}(X_i' = Z_i') = 1$ for any $i\in J$.
Therefore, it is evident from \eqref{eq:BCW:WJdefn} that $W_{i,j}^J = 0$ for $i\in J$ and $j\in I\setminus \bp{J}$.
\end{proof}

Specifically, if $J = \si:\ui$ is an interval (\cf \hypref{asmp:algs:ordered_blocks}) it follows that $W^J$ is structured as (blanks correspond to zeros),
\begin{align}
  \label{eq:structure-W-ideal}
  W^J = \left[ {\scriptsize
      \begin{array}{ccccccccc}
        1& & & & & & & & \\
        &\ddots & & & & & & & \\
        && 1& & & & & & \\
        && \times&0 & \cdots& 0& \times& & \\
        && \vdots&\vdots & \ddots& \vdots& \vdots& & \\
        && \times&0 & \cdots& 0& \times& & \\
        && & & & & 1& & \\
        && & & & & & \ddots& \\
        && & & && & & 1
      \end{array} }
  \right] \eqsp,
\end{align}
with obvious modifications for the ``boundary blocks'' ($\si = 1$ and/or $\ui = \T$).
The ``square of zeros'' correspond to rows and columns $i\in J$ and $j\in J$, respectively. The columns of $\times$'s correspond to $W^J_{i,j}$ with $i\in J$ and $j\in \partial J$.


It remains to compute the
non-zero, off-diagonal elements of $W^J$. To this end, we use the strong, but standard, mixing conditions 
\hypref{asmp:strong-mixing-m} and \hypref{asmp:strong-mixing-g}, given in Section~\ref{sec:hmm}.
This allows us to express the ``unknown'' elements of $W^J$ in terms of the mixing coefficients of the model.

\begin{lemma}\label{prop:coupling-HMM}
  Assume \hypref{asmp:algs:ordered_blocks} and let $J = \si:\ui$.
  Assume \hypref{asmp:strong-mixing-m} and \hypref{asmp:strong-mixing-g} and define the constant $\alpha \in [0,1)$ as
  \begin{align}
    \alpha \eqdef 1 - \delta^{ \frac{1-\h}{\h} } {\textstyle \frac{\sigma_-}{\sigma_+} } \eqsp,
  \end{align}
  where  $\delta$, $\sigma_-$, $\sigma_+$, and $\h$ are defined in \hypref{asmp:strong-mixing-m} and \hypref{asmp:strong-mixing-g}.
  Then, $W^J$ defined as in \eqref{eq:W-What-structure} and with 
  \begin{align*}
    W^J_{i,j} =
    \begin{cases}
      \alpha^{\lfloor \h^{-1} (i-j) \rfloor}, & j = \si-1 \text{ (if $\si > 1$)}, \\
      \alpha^{\lfloor \h^{-1} (j-i) \rfloor}, & j = \ui+1 \text{ (if $\ui < \T$)},
    \end{cases}
  \end{align*}
  for $i \in J$ and $j \in \partial J$ is a Wasserstein matrix for the ideal Gibbs block-transition kernel $P^J$.
\end{lemma}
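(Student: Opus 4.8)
The plan is to build the required coupling directly on the block-conditionals and read off the entries of $W^J$ from the characterization already obtained in \autoref{lem:structure-W-general}: for $i\in J$ and $j\in\partial J$ one has $W^J_{i,j}=\sup_{x_{-j}=z_{-j}}\Cpl^J_{j,x,z}(X'_i\neq Z'_i)$, where $\Cpl^J_{j,x,z}$ is \emph{any} coupling of $\phi^J_x$ and $\phi^J_z$. Hence it suffices, for each admissible pair $(x,z)$ differing only in the boundary coordinate $j$, to produce a coupling under which coordinate $i$ disagrees with probability at most $\alpha^{\lfloor\h^{-1}(i-j)\rfloor}$. I treat $j=\si-1$; the case $j=\ui+1$ is symmetric.

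First I would expose the Markov (bridge) structure of $\phi^{\si:\ui}_x$. By \eqref{eq:particle-actual-target} this is the law of the inhomogeneous Markov chain on $J$ with endpoints held fixed. Introducing the backward functions $B_t$, defined by $B_\ui(x_\ui)=m(x_\ui,x_{\ui+1})$ and $B_t(x_t)=\int m(x_t,x_{t+1})g(x_{t+1},y_{t+1})B_{t+1}(x_{t+1})\,\nu(\rmd x_{t+1})$, the one-step forward transition of this chain is $K_t(x_{t-1},\rmd x_t)\propto m(x_{t-1},x_t)g(x_t,y_t)B_t(x_t)\,\nu(\rmd x_t)$. The crucial observation is that $B_t$, and therefore every $K_t$, depends on $x$ only through the right boundary $x_{\ui+1}$; when $j=\si-1$ the two configurations share this right boundary, so the $x$- and $z$-bridges run with \emph{identical} transition kernels and differ only in their initial state $X_{\si-1}=x_{\si-1}$ versus $Z_{\si-1}=z_{\si-1}$.

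The crux is an $\h$-step Doeblin minorization for this bridge kernel: I claim $K^{(\h)}_t(x_{t-\h},\rmd x_t)\ge(1-\alpha)\,\eta_t(\rmd x_t)$ with $1-\alpha=\delta^{(1-\h)/\h}\sigma_-/\sigma_+$ and $\eta_t$ independent of $x_{t-\h}$. Writing the $\h$-step numerator as $g(x_t,y_t)\,D(x_{t-\h},x_t)\,B_t(x_t)$, where $D$ integrates the remaining $\h-1$ emission factors and the $\h$ transition factors over the intermediate variables, one bounds the \emph{last} transition by $\sigma_+$ and each intermediate $g$ by its supremum; since $\int m(\cdot,x)\,\nu(\rmd x)=1$ the intermediate transition product telescopes to one, giving $D(x_{t-\h},x_t)\le\sigma_+\prod g_l^+$ with no $x_t$-dependence. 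Conversely, lower-bounding each intermediate $g$ by its infimum and invoking the integrated lower bound in \hypref{asmp:strong-mixing-m} for the full $\h$-edge transition chain gives $D(x_{t-\h},x_t)\ge\sigma_-\prod g_l^-$. Taking the ratio against the normalization, the $\h-1$ emission factors contribute $\prod(g_l^-/g_l^+)\ge\delta^{(1-\h)/\h}$ by \hypref{asmp:strong-mixing-g}, the transitions contribute $\sigma_-/\sigma_+$, and the leftover $x_t$-dependence collapses exactly into the probability measure $\eta_t(\rmd x_t)\propto g(x_t,y_t)B_t(x_t)\,\nu(\rmd x_t)$. I expect this normalization bookkeeping — recognizing that the intermediate transition product integrates to one and that the residual $x_t$-factor is precisely the right minorizing density, while matching the exponent $(\h-1)/\h$ on $\delta$ to the claimed $\alpha$ — to be the main obstacle.

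Finally I would couple and conclude. With the $\h$-step minorization in hand and the two bridges sharing every kernel, I build $\Cpl^J_{j,x,z}$ in $\h$-step increments from the boundary $j=\si-1$ by the standard Doeblin coupling (as in \citet{Follmer:1982}): on each block the two chains coalesce with probability at least $1-\alpha$ by a common draw from $\eta_t$, and once coalesced they are driven by common randomness and stay equal thereafter because their kernels coincide. Hence non-coalescence by coordinate $i$ has probability at most $\alpha$ raised to the number of disjoint $\h$-blocks fitting in $\{j+1,\dots,i\}$, namely $\alpha^{\lfloor\h^{-1}(i-j)\rfloor}$, which dominates $\Cpl^J_{j,x,z}(X'_i\neq Z'_i)$; taking the supremum over $(x,z)$ in \eqref{eq:BCW:WJdefn} yields $W^J_{i,j}\le\alpha^{\lfloor\h^{-1}(i-j)\rfloor}$. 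The case $j=\ui+1$ follows by time-reversal, since $\phi^{\si:\ui}_x$ is equally a bridge run from right to left whose backward-in-time kernels depend only on the shared left boundary, producing $\alpha^{\lfloor\h^{-1}(j-i)\rfloor}$; boundary blocks ($\si=1$ or $\ui=\T$) merely drop one endpoint term and the corresponding column of $\partial J$, requiring no further change.
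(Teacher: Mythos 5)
Your proof is correct and takes essentially the same route as the paper's: the same coupling characterization of $W^J_{i,j}$ from \autoref{lem:structure-W-general}, the same decomposition of $\phi_x^J$ into inhomogeneous Markov kernels depending only on the shared boundary point, the same $\h$-step Doeblin minorisation with constant $\delta^{(1-\h)/\h}\sigma_-/\sigma_+$ (your $\eta_t \propto g(x_t,y_t)B_t(x_t)\,\nu(\rmd x_t)$ is exactly the paper's minorising measure $\propto p(x_{\ui+1},y_{t:\ui}\mid x_t)$), and the same $\h$-skeleton coupling (the paper's \autoref{lem:h-coupling}) yielding the exponent $\lfloor \h^{-1}(i-j)\rfloor$, with the $j=\ui+1$ case handled by the analogous backward decomposition. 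The only cosmetic difference is that you carry out the minorisation bound on unnormalised backward functions $B_t$, whereas the paper manipulates the normalised conditional densities directly.
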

\begin{proof}
  See Appendix~\ref{app:proofs:additional}.
\end{proof}

\subsection{Contraction of the ideal block sampler}\label{sec:mainresults:ideal}

Let
\(
\bp \eqdef \bigcup_{J\in\J} \bp{J}
\)
denote the set of all boundary points.
We start by stating a general geometric convergence result which holds for an arbitrary cover $\J$ of $I$.

\begin{hyp}{A}
\item\label{asmp:mr:lambda-ideal}  For all $J \in \J$,
$  \max_{i \in J\cap \bp} \sum_{j \in \bp{J}} W_{i,j}^J \leq \rate < 1 \eqsp. $
\end{hyp}
\begin{theorem}\label{thm:mr:contraction}
  Assume~\hypref{asmp:mr:lambda-ideal}
  and let $W^J$ satisfy \eqref{eq:W-What-structure} for all $J \in \J$.
  Let $\J = \crange{J_1}{J_m}$ be an \emph{arbitrary  cover} of $I$ and let $\W \eqdef W^{J_m}\cdots W^{J_1}$ be the
  Wasserstein matrix for $\compP = P^{J_1}\cdots P^{J_m}$, \ie, for 
  one complete sweep of the ideal Gibbs sampler.
  Then, for $k \geq 1$, $\| \W^k \|_{\infty} \leq \lambda^{k-1} \| \W \|_{\infty}$.
\end{theorem}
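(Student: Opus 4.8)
The plan is to reduce the statement to two structural properties of the one-sweep Wasserstein matrix $\W = W^{J_m}\cdots W^{J_1}$ and then conclude by submultiplicativity. Write $\one$ for the all-ones vector, so that $\|A\|_\infty = \max_i [A\one]_i$ for any matrix $A$ with non-negative entries, and let $D$ be the diagonal $0/1$ matrix with $D_{jj} = \I_{[j\in\bp]}$ selecting the boundary coordinates $\bp = \bigcup_{J\in\J}\bp J$. The two facts I would establish are: (i) every column of $\W$ indexed outside $\bp$ vanishes, i.e.\ $\W = \W D$; and (ii) the boundary rows of $\W$ have small sum, $\max_{i\in\bp}[\W\one]_i \le \rate$. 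Granting these, $\W = \W D$ gives $\W^k = \W(D\W)^{k-1}$, and since $D\W$ has $i$-th row equal to that of $\W$ when $i\in\bp$ and zero otherwise, $\|D\W\|_\infty = \max_{i\in\bp}[\W\one]_i \le \rate$ by (ii). Submultiplicativity of $\|\cdot\|_\infty$ then yields $\|\W^k\|_\infty \le \|\W\|_\infty \, \|D\W\|_\infty^{k-1} \le \rate^{k-1}\|\W\|_\infty$, which is the claim (the case $k=1$ being trivial).

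For (i), I would track the partial products $\W^{(t)} = W^{J_t}\cdots W^{J_1}$ and prove by induction on $t$ that each row $i$ of $\W^{(t)}$ is supported on $\{i\}\cup\bigcup_{s\le t}\bp J_s$. The mechanism is the structure in \eqref{eq:W-What-structure}: applying $W^{J_t}$ leaves every row $i\notin J_t$ unchanged (identity row), while for $i\in J_t$ it replaces row $i$ by a non-negative combination of rows indexed by $\bp J_t \subseteq \bp$. Since $\J$ is a cover, each coordinate $i$ lies in some block; letting $t^\star(i)$ be the last block containing $i$, row $i$ is overwritten at step $t^\star(i)$ and then left untouched, so the final row $i$ of $\W$ is supported on $\bp J_{t^\star(i)}\cup\bigcup_{s<t^\star(i)}\bp J_s \subseteq \bp$. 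This is exactly $\W = \W D$.

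For (ii), the key observation — and the crux of the argument — is that $\bp J \subseteq \bp$ for every block $J$, so whenever a boundary coordinate is updated it is averaged only against other boundary coordinates. Concretely, writing $v^{(t)} = \W^{(t)}\one$, I would show the invariant $v^{(t)}_j \le 1$ for all $j\in\bp$ and all $t$ by induction: a boundary $j\notin J_t$ is unchanged, while a boundary $j\in J_t$ satisfies $j\in J_t\cap\bp$, so $v^{(t)}_j = \sum_{l\in\bp J_t} W^{J_t}_{jl} v^{(t-1)}_l \le \rate \max_{l\in\bp J_t} v^{(t-1)}_l \le \rate \le 1$, using $\bp J_t\subseteq\bp$ together with the inductive hypothesis and \hypref{asmp:mr:lambda-ideal}. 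Applying this at the last block $t^\star(i)$ containing a given $i\in\bp$ gives $[\W\one]_i = v^{(t^\star(i))}_i \le \rate$, which is (ii).

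The main obstacle is subtle and lives in (ii): the partial products $\W^{(t)}$ need not be norm-contracting — interior rows of a block can have row sums exceeding one, so $v^{(t)}$ can grow on non-boundary coordinates — and a naive bound would let these large values feed back into the boundary estimate. What rescues the argument is precisely the containment $\bp J \subseteq \bp$: a boundary coordinate only ever ingests other boundary values, which the invariant keeps at most $1$, so \hypref{asmp:mr:lambda-ideal} can be applied cleanly at each overwrite. Identifying and exploiting this closed-ness of $\bp$ under the averaging is the heart of the proof; the remaining steps are bookkeeping with the identity/zero block structure of \eqref{eq:W-What-structure}.
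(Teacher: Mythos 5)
Your proposal is correct and takes essentially the same route as the paper: you first show that every column of $\W$ outside $\bp$ vanishes (this is the paper's \autoref{lem:mask}, proved there via first-block indices rather than your row-support induction), which yields $\| \W^k \|_\infty \leq \| \W \|_\infty \| M\W \|_\infty^{k-1}$ for the boundary mask, and you then bound $\max_{i \in \bp} [\W \one]_i \leq \rate$ by induction over the partial products $W^{J_t} \cdots W^{J_1}$, exploiting $\bp J \subseteq \bp$ and \hypref{asmp:mr:lambda-ideal} exactly as the paper does. The only cosmetic difference is that the paper's inductive invariant carries the bound $\rate$ on already-covered boundary points while yours carries the bound $1$ on all of $\bp$ and extracts $\rate$ at the last block containing each index; both hinge on the same closedness of $\bp$ under the averaging.
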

\begin{proof}
  See Appendix~\ref{app:proofs:main}.
\end{proof}


The following result now characterises the convergence of the law of the sampled output of the ideal block sampler.

\begin{corollary} \label{cor:obvious}
  Let $\mu$ and $\nu$ be two probability distributions on $\Xset^{\T}$ and let $\Cpl$ be an arbitrary coupling of $\mu$ and $\nu$.
  Under the same conditions as in \autoref{thm:mr:contraction} we have, for any $k \geq 1$ and any $f$ of finite oscillation,
  \[
  |\mu \compP^k f - \nu \compP^k f| \leq
  \| \W \|_\infty \rate^{k-1} \max_{j\in I} \{ \Cpl(X_j \neq Z_j) \} \sum_{i\in I} \osc_i(f).
  \]
\end{corollary}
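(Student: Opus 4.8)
The plan is to combine the generic Wasserstein-to-contraction estimate \eqref{eq:wassergivescontraction} with the sharpened bound on iterates of $\W$ supplied by \autoref{thm:mr:contraction}. First I would recall that $\W = W^{J_m}\cdots W^{J_1}$ is a Wasserstein matrix for $\compP$ by the composition rule stated just after \eqref{eq:defn_wasser} (Wasserstein matrices multiply under composition of kernels). Consequently $\W^k$ is a Wasserstein matrix for $\compP^k$, and the general inequality \eqref{eq:wassergivescontraction}, applied to the kernel $\compP$ with Wasserstein matrix $\W$, gives
\[
|\mu \compP^k f - \nu \compP^k f| \leq \left( \sum_{i\in I} \osc_i(f) \right) \| \W^k \|_\infty \max_{j \in I} \Cpl(X_j \neq Z_j)
\]
for any coupling $\Cpl$ of $\mu$ and $\nu$.

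The second and essentially only substantive step is to replace $\| \W^k \|_\infty$ by the submultiplicative-type bound from \autoref{thm:mr:contraction}, namely $\| \W^k \|_\infty \leq \rate^{k-1}\| \W \|_\infty$. Substituting this directly into the displayed inequality yields exactly the claimed estimate
\[
|\mu \compP^k f - \nu \compP^k f| \leq \| \W \|_\infty \rate^{k-1} \max_{j\in I} \{ \Cpl(X_j \neq Z_j) \} \sum_{i\in I} \osc_i(f),
\]
which completes the proof. Since the hypotheses of \autoref{thm:mr:contraction} (assumption \hypref{asmp:mr:lambda-ideal} together with the structural form \eqref{eq:W-What-structure} of each $W^J$) are assumed to hold, the bound on $\| \W^k \|_\infty$ is available verbatim, and the coupling $\Cpl$ is arbitrary exactly as in the statement.

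I anticipate no genuine obstacle here: the corollary is a direct specialization of the already-established machinery, and the entire content is the chaining of two prior results. The one point that warrants a sentence of care is the logical ordering of the two invocations — \eqref{eq:wassergivescontraction} is stated for a single kernel $P$ with a single Wasserstein matrix $W$, so I would be explicit that we instantiate it with $P \gets \compP$ and $W \gets \W$, and only afterward feed in the iterate bound $\| \W^k \|_\infty \leq \rate^{k-1}\| \W \|_\infty$ rather than the trivial submultiplicative bound $\| \W \|_\infty^k$. This substitution is what converts the naive geometric rate $\| \W \|_\infty^k$ into the stated rate $\| \W \|_\infty \rate^{k-1}$, and it is precisely the improvement that \autoref{thm:mr:contraction} buys us, so it is worth flagging as the heart of the argument even though the calculation itself is immediate.
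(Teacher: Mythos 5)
Your proposal is correct and is exactly the argument the paper intends (the paper omits the proof as immediate): instantiate \eqref{eq:wassergivescontraction} with $P \gets \compP$ and $W \gets \W$, keeping the bound in terms of $\|\W^k\|_\infty$ (from the first line of \eqref{eq:wassergivescontraction}) rather than the cruder $\|\W\|_\infty^k$, and then substitute $\|\W^k\|_\infty \leq \rate^{k-1}\|\W\|_\infty$ from \autoref{thm:mr:contraction}. Your closing remark correctly identifies the only point of care, namely that the improvement over the naive geometric rate comes precisely from using the iterate bound of \autoref{thm:mr:contraction} instead of submultiplicativity.
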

\autoref{cor:obvious} verifies two important facts. First is that if we are interested only in the convergence of certain marginals of the sampled process $\{ X[k], k \in \nset\}$ in \eqref{eq:sampledprocess} to the corresponding marginal of the \jsd, then the convergence rate is \emph{independent} of the dimension $\T$ of the \jsd. Secondly, for convergence in total variation norm of the law of $X[k]$ to the full \jsd, we attain a
bound on the error which is $O( \T \rate^k )$. That is, the error grows slowly (linearly) with the dimension $\T$ of the \jsd.

Under the conditions of \autoref{prop:coupling-HMM} 
we can clearly see the benefit of blocking for verifying condition \hypref{asmp:mr:lambda-ideal}.
As an illustration, let $\h=1$ in \ref{asmp:strong-mixing-m}. 
The condition for contraction requires that for any $i \in J \cap \partial$ (assuming $J$ is an internal block),
\begin{align}
  \label{eq:illustration-1}
  \sum_{j \in \partial J} W_{i,j}^J = \alpha^{i-(\si-1) } + \alpha^{(\ui+1)-i} < 1\eqsp,
\end{align}
where $\alpha \in [0,1)$ is defined in \autoref{prop:coupling-HMM}.
First of all, we note that it is possible to ensure $\sum_{j \in \partial J} W_{i,j}^J < 1$ for \emph{any} $i\in J$
by increasing the block size $L \eqdef \ui-\si+1$. Indeed, the maximum of \eqref{eq:illustration-1} for $i\in J$ is attained
for $i = \si$ or $i = \ui$, for which $\sum_{j \in \partial J} W_{i,j}^J = \alpha + \alpha^L$.
Secondly, however, we note that \autoref{prop:coupling-HMM} also reveals the benefit of using \emph{overlapping} blocks.
Indeed, since we only need to control \eqref{eq:illustration-1} for $i \in J \cap \partial$, we can select
the blocking scheme so that the set of boundary points $\partial$ excludes indices $i$ close to the boundary of block $J$.
Consequently, by using overlapping blocks we can control both terms in \eqref{eq:illustration-1}---and thus the overall convergence rate
of the algorithm---by increasing $L$.


\autoref{thm:mr:contraction} assumes no specific structure for $\J$ other than it being a cover. As such, it cannot provide a sharper estimate of the contraction since it caters for all blocking structures. In order to refine the contraction estimate we impose the blocking structure formalised by \hypref{asmp:algs:ordered_blocks} and \hypref{asmp:algs:overlap},
as illustrated in Figure~\ref{fig:algs:example_blocking}, and study the interplay between block size, overlap, and convergence. The theorem below improves the estimate of the decay
of errors per complete sweep from $\rate$ in \autoref{thm:mr:contraction} to $\rate^2$ for the blocking structure of Figure~\ref{fig:algs:example_blocking}.


\begin{theorem} \label{thm:idealStructuredRates}
Assume \hypref{asmp:algs:ordered_blocks},~\hypref{asmp:algs:overlap}, and \hypref{asmp:mr:lambda-ideal}. Then, for any $k\geq 1$: 
\begin{itemize}
\item For the ideal PAR sampler, 
\begin{equation}
\| \mathcal{W}^{k} \|_\infty \leq\| \mathcal{W}\|_\infty \lambda^{2(k-1)}
\label{eq:PARidealrate}
\end{equation}
and $\| \mathcal{W}\|_\infty \leq 2$, where $\lambda$ is defined as in \hypref{asmp:mr:lambda-ideal}.
\item For the ideal L-R sampler,
\begin{equation}
  \| \mathcal{W}^{k}\|_\infty \leq    \| \mathcal{W}\|_\infty \beta^{k-1}
  \label{eq:LRidealrate}
\end{equation}
and $\| \mathcal{W}\|_\infty \leq 1+ \lambda$,
where $\beta = \max_{k\in2:m}\lambda a_{k}+b_{k}$,
$a_{k}=W_{\partial_{+}J_{k-1},\partial_{-}J_{k}}^{J_{k}}$, $b_{k}=W_{\partial_{+}J_{k-1},\partial_{+}J_{k}}^{J_{k}}$.
\end{itemize}
\end{theorem}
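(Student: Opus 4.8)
The plan is to bound, for each sweep kernel $\compP$, the quantity $\|\W^k\|_\infty$, which since $\W$ has nonnegative entries equals $\max_{i\in I}[\W^k\one]_i$. I would first record two elementary geometric consequences of \hypref{asmp:algs:ordered_blocks}--\hypref{asmp:algs:overlap}: (a) non-consecutive blocks are disjoint, so the odd-indexed blocks are mutually disjoint and likewise the even-indexed ones; and (b) since consecutive blocks overlap, each boundary point lies strictly inside an adjacent block — for PAR every boundary of an even block lies in an odd block and vice versa, while for L-R the left boundary $\partial_-J_k=\min(J_k)-1$ of each block lies inside its predecessor $J_{k-1}$. I then read off the action of a single $W^J$ from \eqref{eq:W-What-structure}: as a matrix it is the identity on rows $i\in J^c$, and on rows $i\in J$ it replaces the entry by a nonnegative combination, with weights $W^J_{i,\cdot}$, of the entries at the two boundary columns $\partial J$; by \hypref{asmp:mr:lambda-ideal} that row sum is at most $\rate$ whenever $i\in J\cap\partial$.

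Both displays then follow from one template, applied to a sampler-specific read set $\mathcal{B}\subset\partial$. First (a monotone lift), I would show that $\W u$ depends on a nonnegative vector $u$ only through $u|_{\mathcal{B}}$, with nonnegative coefficients, so that $[\W u]_i\leq(\max_{j\in\mathcal{B}}u_j)\,[\W\one]_i$ for every $i$; maximising over $i$ gives $\|\W u\|_\infty\leq(\max_{j\in\mathcal{B}}u_j)\|\W\|_\infty$. Second (a boundary contraction), I would prove $\max_{j\in\mathcal{B}}[\W u]_j\leq\rho\max_{j\in\mathcal{B}}u_j$ with $\rho=\rate^2$ for PAR and $\rho=\maxsumW$ for L-R. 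Iterating the two steps with $u=\W^{k-1}\one$ (so that $\max_{\mathcal{B}}\W^{k-1}\one\leq\rho^{k-1}$) yields $\|\W^k\|_\infty\leq\|\W\|_\infty\rho^{k-1}$, which is exactly \eqref{eq:PARidealrate} and \eqref{eq:LRidealrate}.

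For PAR I take $\mathcal{B}$ to be the union of the boundaries of the odd blocks and use $\W=W_{\mathrm{even}}W_{\mathrm{odd}}$, so that as a matrix product $\W u$ first overwrites the odd blocks and then the even ones (disjointness of the odd blocks makes $W_{\mathrm{odd}}$ a single simultaneous overwrite, and likewise for $W_{\mathrm{even}}$). Since every even-block boundary lies in an odd block, $[W_{\mathrm{odd}}u]$ at such a point is a $\rate$-bounded combination of $u|_{\mathcal{B}}$; applying the $\rate$-bound a second time at the outer even block produces the factor $\rate^2$. This crossing structure is precisely what validates both the read-through-$\mathcal{B}$ claim and the two-hop contraction, and bounding each row sum of $\W\one$ by its two boundary entries gives $\|\W\|_\infty\leq2$. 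For L-R I take $\mathcal{B}=\{\partial_+J_k:k<m\}$, the right boundaries. Sweeping left to right, the value $e_k:=[\W u]_{\partial_-J_{k+1}}$ at the left boundary of the next block satisfies $e_k=W^{J_k}_{\partial_-J_{k+1},\partial_-J_k}e_{k-1}+W^{J_k}_{\partial_-J_{k+1},\partial_+J_k}u_{\partial_+J_k}$; because $\partial_-J_{k+1}\in J_k\cap\partial$ the two coefficients sum to at most $\rate$, and an induction gives $e_k\leq\rate\max_{j\in\mathcal{B}}u_j$. Feeding this into a right-boundary value, $[\W u]_{\partial_+J_k}=a_{k+1}e_k+b_{k+1}u_{\partial_+J_{k+1}}\leq(\rate\,a_{k+1}+b_{k+1})\max_{j\in\mathcal{B}}u_j$, and maximising over $k$ gives the contraction with $\rho=\maxsumW$; the same one-step expansion with $u=\one$ and $e_{k-1}\leq\rate$ bounds each row sum of $\W$ by $1+\rate$.

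The main obstacle is the L-R case: one must identify that the only ``fresh'' reads occur at the right boundaries while the left boundaries already carry contracted values, set up the scalar recursion for $e_k$, and in particular verify that the rows at which \hypref{asmp:mr:lambda-ideal} is invoked ($\partial_-J_{k+1}$ in the $e_k$ step and $\partial_+J_{k-1}$ in the $a_k,b_k$ step) genuinely lie in $J_k\cap\partial$ — this is exactly where \hypref{asmp:algs:ordered_blocks}--\hypref{asmp:algs:overlap} are used. For PAR the analogous delicate point is justifying that disjointness makes $W_{\mathrm{odd}}$ and $W_{\mathrm{even}}$ genuine simultaneous overwrites, so that the two $\rate$-factors decouple. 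Boundary blocks ($\min(J_1)=1$ and $\max(J_m)=n$) simply drop the corresponding boundary column and only improve the estimates.
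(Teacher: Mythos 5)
Your proposal is correct and is essentially the paper's own argument in different clothing: your ``read set'' $\mathcal{B}$ together with the monotone-lift/boundary-contraction template is exactly the paper's mask-matrix device ($\W r = \W(Mr)$, justified by \autoref{lem:mask}, followed by $\|\W^k\|_\infty \leq \|\W\|_\infty \|M\W\|_\infty^{k-1}$), with the same choices $\mathcal{B}=\partial_{\mathrm{odd}}$ for PAR and $\mathcal{B}=\partial_{+}$ for L-R. Your two-hop $\lambda^2$ estimate for PAR and your scalar recursion for $e_k$ in the L-R case reproduce the paper's computations (the latter is precisely its induction \eqref{eq:pf:rk-result}--\eqref{eq:pf:rk-lr} specialized to left boundary points), the only cosmetic caveat being that the ``simultaneous overwrite'' property of $\W_{\mathrm{odd}}$, $\W_{\mathrm{even}}$ needs the separation of non-consecutive blocks from \hypref{asmp:algs:overlap}, not merely their disjointness.
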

\begin{proof}
  See Appendix~\ref{app:proofs:main}.
\end{proof}

\begin{remark}\label{rem:interpretIdeal}
There is parity in the two rates of \autoref{thm:idealStructuredRates} since it can be shown that $\beta\approx\lambda^{2}$. For example let each
block be the same length $L$ and the overlap between all adjacent
blocks $J_{k-1},J_{k}\in\mathcal{J}$ be fixed, $|J_{k-1}\cap J_{k}|=p$.
Under strong mixing \hypref{asmp:strong-mixing-m}--\hypref{asmp:strong-mixing-g},
$b_{k}=\alpha^{\lfloor h^{-1}(L-p) \rfloor},$
$a_{k}=\alpha^{\lfloor h^{-1}(p+1) \rfloor}$,
$\lambda=\alpha^{\lfloor h^{-1}(L-p)\rfloor}+\alpha^{\lfloor h^{-1}(p+1) \rfloor}$
and $\beta/\lambda^{2}\rightarrow1$ as $L$ increases and $p$ is fixed.
\end{remark}


\subsection{Contraction of the Particle Gibbs block sampler}\label{sec:mainresults:pgibbs}
We now turn our attention to the Particle Gibbs block sampler.
In this section we state a 
 main result (\autoref{thm:simplenonidealrates}) that parallels \autoref{thm:idealStructuredRates} for the \pg kernel. 
For the sake of interpretability, we specialize the result to the case of a common block size $\fixL$ and common overlap $\p$ between successive blocks (see also \autoref{rem:interpretIdeal}). A version of \autoref{thm:simplenonidealrates} without 
this assumption, nor strong mixing, is presented in Section~\ref{sec:maingeneralresults:pgibbs}; see
Theorems~\ref{thm:parallelPgibbsGeneral}~and~\ref{thm:sequentialPgibbsGeneral}.
(\autoref{thm:simplenonidealrates} is a corollary of these theorems.)
\begin{hyp}{B}
\item For all $J\in\J$, $|J|=\fixL$ and for all consecutive $J_{k-1}, J_k\in\J$, $|J_{k-1} \cap J_k| = \p$. \label{asmp:commonblocks}
\end{hyp}

Note that \hypref{asmp:commonblocks} implies that $\T$ is assumed to satisfy $\T = (L-p)m + p$.

\begin{theorem} \label{thm:simplenonidealrates}
Assume \hypref{asmp:algs:ordered_blocks}--\hypref{asmp:commonblocks} and \hypref{asmp:strong-mixing-m}--\hypref{asmp:strong-mixing-g}.
Let $\compP$ denote the Markov kernel corresponding to one complete sweep of either the PAR sampler or the L-R sampler,
and assume that each block is updated by simulating from the PG kernel (Algorithm~\ref{alg:cpf})
using a bootstrap proposal: $r(x,x') = m(x,x')$.
Let $\mu$ and $\nu$ be two probability distributions on $\Xset^{\T}$ and let $\Cpl$ be an arbitrary coupling of $\mu$ and $\nu$.
Then, for any $f$ of finite oscillation and any $k \geq 1$
\begin{equation}
|\mu \compP^k f - \nu \compP^k f| \leq
\rate_\mathrm{PG}^{k} \times \Cpl(X \neq Z) \sum_{i\in I} \osc_i(f),
\label{eq:thm:simplenonidealrates}
\end{equation}
where:
\begin{itemize}
\item For the Particle Gibbs PAR sampler,
\begin{equation}
  \label{eq:thm:simplenonidealrates_2}
  \rate_{\mathrm{PG}} \leq
  \rate(\maxsumW \vee 1)+\epsilon \left(2\rate +25\epsilon +8(\maxsumW \vee 1)\right)  .
\end{equation}
\item For the Particle Gibbs L-R sampler,
\begin{equation}
\rate_{\mathrm{PG}} \leq \rate + \alpha^{\lfloor h^{-1} (\fixL-\p+1) \rfloor }+ 2\epsilon\frac{3(\maxsumW \vee 1)+1+\rate}{1-2\epsilon-\alpha^{\lfloor  h^{-1}(\p+1) \rfloor}}
\end{equation} provided $2\epsilon+\alpha^{\lfloor  h^{-1} (\p+1) \rfloor}<1$,
\end{itemize}
provided that $\lambda < 1$.
In the above,
\begin{align}
\rate &= 2 \alpha^{\lfloor h^{-1} (\p + 1) \rfloor}, &
\maxsumW &= \alpha^{\lfloor \h^{-1} \rfloor } + \alpha^{\lfloor  \h^{-1} (\fixL - \p +1) \rfloor },
\end{align}
and
\(
  \epsilon = \epsilon(\Np,L) = 1- (1-\frac{1}{c(N-1)+1})^L,
\)
for some constant $c$ (specified in Proposition~\ref{lem:pgibbs-ergodicity} below) which is independent of $\T$, $\Np$, $L$ and $p$.
\end{theorem}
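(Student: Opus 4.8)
The plan is to control the convergence through the Wasserstein-matrix machinery of \eqref{eq:wassergivescontraction}: it suffices to exhibit a Wasserstein matrix $\W_{\mathrm{PG}}$ for one complete sweep $\compP$ of the PG block sampler (PAR or L-R) and to show that iterating it $k$ times contracts geometrically at rate $\rate_{\mathrm{PG}}$. Since $\compP$ is a product of the per-block non-ideal kernels $Q^J_\Np$, and a product of kernels admits the (reversed) product of their Wasserstein matrices as recalled after \eqref{eq:defn_wasser}, the whole argument reduces to: (i) building a good per-block matrix $W^J_{\mathrm{PG}}$ for $Q^J_\Np$; (ii) bounding the $\|\cdot\|_\infty$-norm of the sweep product; and (iii) an iteration lemma converting $\|\W_{\mathrm{PG}}^k\|_\infty$ into a clean $\rate_{\mathrm{PG}}^k$, exactly as \autoref{thm:mr:contraction} and \autoref{thm:idealStructuredRates} do for the ideal sampler. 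Concretely I would first prove the general Theorems~\ref{thm:parallelPgibbsGeneral}~and~\ref{thm:sequentialPgibbsGeneral} and then obtain \autoref{thm:simplenonidealrates} as the announced corollary, by substituting the explicit strong-mixing entries of \autoref{prop:coupling-HMM} and the common-block geometry of \hypref{asmp:commonblocks}.

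Step (i) is the heart of the matter and is where the perturbation parameter $\epsilon$ enters. The key input is the uniform ergodicity of the block PG kernel from Proposition~\ref{lem:pgibbs-ergodicity}, which under the bootstrap proposal $r(x,x')=m(x,x')$ and \hypref{asmp:strong-mixing-m}--\hypref{asmp:strong-mixing-g} yields a minorisation $\kernel{Q^J_\Np}{x^\star}{\cdot} \geq (1-\epsilon)\,\phi_{x^\star}^J(\cdot)$, uniform in the reference trajectory $x^\star_J$, with $\epsilon = 1-(1-\tfrac{1}{c(\Np-1)+1})^{\fixL}$. Writing $\kernel{Q^J_\Np}{x^\star}{\cdot} = (1-\epsilon)\phi^J_{x^\star}(\cdot) + \epsilon\,\kernel{R}{x^\star}{\cdot}$ and coupling two reference configurations $x,z$ by drawing from the $\phi$-parts through the \emph{ideal} coupling of \autoref{lem:structure-W-general} on a shared $(1-\epsilon)$-event and independently from the residual otherwise, the not-coupling probabilities satisfy $\Cpl^{\mathrm{PG}}_{j,x,z}(X_i'\neq Z_i') \leq (1-\epsilon)W^J_{i,j} + \epsilon$ for $i\in J$. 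Hence $W^J_{\mathrm{PG}}$ coincides with the ideal matrix $W^J$ of \eqref{eq:W-What-structure} off the interior, but the previously-zero interior block $\{i\in J,\,j\in J\}$ is now filled with entries $\leq\epsilon$ and the $\partial J$-columns are inflated by at most $\epsilon$. This is precisely the ``$\epsilon$-perturbation of the ideal sampler'' picture.

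Step (ii) then reduces to bounding row sums of the sweep product. For the PAR sampler the odd (resp. even) blocks are mutually separated (\hypref{asmp:algs:overlap}), so $\W_{\mathrm{PG}}$ factorises into an odd- and an even-block piece whose per-block matrices act on disjoint coordinate groups and whose half-sweep norm is read off block-locally; tracking how an interior $\epsilon$-error left by the odd half-sweep is transported (through the overlap columns, themselves of size controlled by $\rate$ and $\maxsumW$) by the even half-sweep produces both the linear $\epsilon$ contributions and the quadratic $\epsilon\cdot\epsilon$ term in \eqref{eq:thm:simplenonidealrates_2}. For the L-R sampler the blocks are updated sequentially and interior errors are carried from $J_{k-1}$ into $J_k$ through the overlap; summing this cascade as a geometric series in the per-step amplification $2\epsilon+\alpha^{\lfloor \h^{-1}(\p+1)\rfloor}$ yields the resolvent factor $1/(1-2\epsilon-\alpha^{\lfloor \h^{-1}(\p+1)\rfloor})$, valid under the stated smallness condition. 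Finally the iteration lemma of step (iii) upgrades the one-sweep estimate to $\rate_{\mathrm{PG}}^k$, and \eqref{eq:wassergivescontraction} delivers \eqref{eq:thm:simplenonidealrates}.

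The main obstacle is step (ii): a priori the sweep product contains $m\approx\T/(\fixL-\p)$ perturbed factors, so a naive sub-multiplicative bound would let the $\epsilon$-errors accumulate multiplicatively across the whole chain and reintroduce exactly the $\T$-dependence we are trying to eliminate. The substance of the proof is therefore to exploit the locality and sparsity of $W^J_{\mathrm{PG}}$ — each block's interior errors can leak only into its immediate neighbours via the overlap — to show that the relevant row sums stabilise block-by-block (PAR) or telescope into a convergent geometric series (L-R), so that $\rate_{\mathrm{PG}}$ depends only on the local parameters $\fixL$, $\p$, $\alpha$, $\epsilon$ and not on $\T$. Pinning down the explicit constants (the $25$, the $8$, the leading factors of $2$) then amounts to carefully accounting for these overlap cross-terms.
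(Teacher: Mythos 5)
Your high-level architecture coincides with the paper's: the minorisation of Proposition~\ref{lem:pgibbs-ergodicity} turned into an $\epsilon$-perturbed per-block Wasserstein matrix (your step (i) is exactly the coupling lemma behind hypothesis \hypref{asmp:WJperturbed-1}), general one-sweep bounds for PAR and L-R (Theorems~\ref{thm:parallelPgibbsGeneral} and~\ref{thm:sequentialPgibbsGeneral}), then specialisation under \hypref{asmp:strong-mixing-m}--\hypref{asmp:strong-mixing-g} and \hypref{asmp:commonblocks}, and finally \eqref{eq:wassergivescontraction} with submultiplicativity. However, there is a genuine gap in your step (ii): ``carefully accounting for the overlap cross-terms'' cannot produce the stated constants. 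With your per-block matrix ($\widehat{W}^J_{i,j} \leq W^J_{i,j}+\epsilon$ for $i\in J$ and $j\in J\cup\partial J$), every row sum inside a block unavoidably acquires an additive $\epsilon\,|J\cup\partial J| \approx \epsilon \fixL$, because the coordinate-wise Wasserstein formalism charges the \emph{single} minorisation-failure event once per column, i.e.\ $\fixL$ times over. This is precisely what the general theorems deliver when applied directly to the $X$-system: terms like $\epsilon(\fixL+2)^2$ and $\epsilon \fixL(1\vee\maxsumW)$ for PAR, and an L-R denominator $1-(L_1+1)\epsilon-\maxW$ with $L_1=\p$. The resulting rate grows quadratically (PAR) or linearly (L-R) in $\epsilon \fixL$ and does \emph{not} match \eqref{eq:thm:simplenonidealrates_2}, whose $\epsilon$-coefficients ($25$ and $8$) are free of $\fixL$, nor the L-R denominator $1-2\epsilon-\alpha^{\lfloor h^{-1}(\p+1)\rfloor}$; it would also translate into a stability requirement $\Np \gtrsim \fixL^2$ instead of the paper's $\Np \gtrsim \fixL$.

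The missing idea is the paper's lumping trick. For the analysis only, the states $X_{1:\T}$ are regrouped into the $\Xi$-system (Remark~\ref{rem:Xisysdefn}): each block's left overlap, middle, and right overlap become single super-states, so every lumped block has at most $3$ coordinates and the lumped overlap is $1$. One must then re-derive the ideal Wasserstein matrix \eqref{eq:lumped-W-ideal} for the lumped blocks (the analogue of \autoref{prop:coupling-HMM}) and verify, via part~2 of Proposition~\ref{lem:pgibbs-ergodicity}, that the PG kernel is an $\epsilon(\Np,|J_k|)$-perturbation \emph{in the lumped coordinates}: the perturbation size still depends on the true block length $\fixL$ through the minorisation, but the number of columns charged with $\epsilon$ shrinks to at most $5$. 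Applying the general theorems to the $\Xi$-system with effective block size $3$ and overlap $1$ then yields exactly $25=(3+2)^2$, the $8$, and $2\epsilon=(L_1+1)\epsilon$ with $L_1=1$; the statement follows from the crude bounds $\sum_{i=1}^{2m-1}\osc_{\Xi_i}(f)\leq\sum_{i\in I}\osc_i(f)$ and $\max_j \Cpl(\Xi_j\neq\check{\Xi}_j)\leq\Cpl(X\neq Z)$ (which is why the theorem features the whole-vector miscoupling probability). Without this change of coordinates your outline proves only a strictly weaker, $\fixL$-inflated version of the theorem.
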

\begin{proof}
  The proof is given in \autoref{sec:maingeneralresults:pgibbs}.
\end{proof}


\autoref{thm:simplenonidealrates} also applies to the ideal sampler (set $\epsilon=0$). In terms of sufficiency for contraction,  the requirement that the non-$\epsilon$ terms of this theorem be less than 1 is stronger than \hypref{asmp:mr:lambda-ideal}; this should not be surprising since the analysis is catered for the 
non-ideal PG kernel
and thus is inherently more conservative.
For common blocks lengths $\fixL$ and overlap  $\p$, \hypref{asmp:mr:lambda-ideal} requires $\alpha^{\lfloor \frac{\p+1}{\h} \rfloor }+\alpha^{\lfloor \frac{\fixL-\p}{\h} \rfloor }<1$.
The non-$\epsilon$ terms of \autoref{thm:simplenonidealrates} are shrunk by increasing the overlap of blocks and then increasing block size with overlap fixed.
 Alternatively, if $p$ is a constant fraction of $L$, then $\rate$ tends to zero as $L$ increases.
We remark that the non-$\epsilon$ rates of \autoref{thm:simplenonidealrates} are not as sharp as they could be as given in
\autoref{thm:parallelPgibbsGeneral} and \autoref{thm:sequentialPgibbsGeneral} below.
The upper bounds for the non-$\epsilon$ terms were chosen for the sake of simplicity and interpretability since they are functions of system forgetting ($\alpha$ and $\h$), common block lengths $\fixL$, and overlap $\p$ only.

\section{Proof of the Contraction of the Particle Gibbs Block Sampler}\label{sec:maingeneralresults:pgibbs}
This section is dedicated to the proof of \autoref{thm:simplenonidealrates}. Furthermore, it provides a more general version of \autoref{thm:simplenonidealrates} that
avoids the common block length and overlap structure of assumption \hypref{asmp:commonblocks}.
The general results given below are stated in terms of Wasserstein estimates for the \emph{non-ideal} blocked Gibbs sampler $\widehat{W}$,
under the assumption \hypref{asmp:WJperturbed-1} that $\widehat W$ is an $\epsilon$-perturbation of a Wasserstein matrix for the ideal block sampler.
Specifically, let $\widehat W^J$ be a Wasserstein matrix for the non-ideal block-transition kernel defined in \eqref{eq:gibbs:P-def2}.
By an analogous argument as in Lemma~\ref{lem:structure-W-general} it follows that
$\widehat W^J$ has a similar structure as $W^J$, but with
(possibly) non-zero entries also for rows and columns $i\in J$ and $j\in J$, respectively, which motivates the following assumed structure.

\begin{hyp}{A}
\item 
  \label{asmp:WJperturbed-1}
  There exists a (common) constant $\epsilon \in [0,1)$ and, for each $J \in \J$, a matrix $W^J$ satisfying \eqref{eq:W-What-structure}
  such that, for any $J \in \J$,
  \(
  \widehat{W}_{i,j}^{J} \eqdef W_{i,j}^{J} + \epsilon\,\mathbb{I}_{[i\in J,j\in J_{+}]}
  \)
  is a Wasserstein matrix for the non-ideal block-transition kernel updating block $J$.
\end{hyp}

In Proposition~\ref{lem:pgibbs-ergodicity} below we show that \hypref{asmp:WJperturbed-1} indeed holds for the PG kernel,
with $W^J$ being a Wasserstein matrix for block $J$ of the ideal sampler (as in \autoref{prop:coupling-HMM})
and where the perturbation $\epsilon$ depends on (grows with) block size $|J|$ and (decreases with) the number of particles $\Np$, but is independent of the length of the total data record $\T$ or the specific \hmm observations pertaining to each block. This is key to the stability of the PG block sampler as $\T\rightarrow \infty$ for fixed $\Np$.

\begin{theorem}
\label{thm:parallelPgibbsGeneral}
Assume \hypref{asmp:algs:ordered_blocks},~\hypref{asmp:algs:overlap}, \hypref{asmp:WJperturbed-1}, PAR and assume that the number of
blocks $m = \card{\J}$ is odd.\footnote{Clearly, an analogous result holds for even $m$.} 
Let $J_{-k}=\cup_{J\in \{ \J\setminus J_{k} \}}J$ and $L=\max_{J\in\J}|J|$. Then 
$\widehat{\W}$, the Wasserstein matrix of one complete sweep (defined analogously to \autoref{thm:mr:contraction},) satisfies
\[
[\widehat{\mathcal{W}}\mathbf{1}]_{i}\leq
\begin{cases}
  \rate^{2}+\epsilon\left(\rate(L+4)+\epsilon\left(L+2\right)^{2}+L(1\vee\maxsumW)\right)  ,
  & i \in  J_{-k}^{c}, \, k\text{ even}  ,\\
  \rate+\epsilon\left(L+2\right)  ,
  & i\in  J_{-k}^{c}, \, k\text{ odd}  ,\\
  \rate\maxsumW+\epsilon\left(\maxsumW(L+2)+2\rate+\epsilon\left(L+2\right)^{2}+L(1\vee\maxsumW)\right)  ,
  & i\in J_{k}\cap J_{-k}, \, k\text{ even} ,
\end{cases}
\]
where 
\begin{eqnarray*}
\rate &=& \max_{J_{k}\in\J}\max_{i\in  J_{-k}^{c}}W_{i,\partial_{-}J_{k}}^{J_{k}}+W_{i,\partial_{+}J_{k}}^{J_{k}},\\
\maxsumW &=& \max_{J\in\J}\max_{i\in J}\sum_{j\in\partial J}W_{i,j}^{J}
\end{eqnarray*}
and $W^{J_1}_{i,\partial_{-} J_1} = W^{J_m}_{i,\partial_{+} J_m } = 0$ by convention.
\end{theorem}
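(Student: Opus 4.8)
The plan is to assemble $\widehat{\W}$ from the per-block matrices via the composition rule for Wasserstein matrices and then bound its row sums $[\widehat{\W}\one]_i$ directly, since for the nonnegative matrix $\widehat{\W}$ one has $\|\widehat{\W}\|_\infty=\max_i[\widehat{\W}\one]_i$. Writing $\compP=\compP_\text{odd}\compP_\text{even}$ with $\compP_\text{odd}=P^{J_1}P^{J_3}\cdots P^{J_m}$ and $\compP_\text{even}=P^{J_2}\cdots P^{J_{m-1}}$, the composition rule gives $\widehat{\W}=\widehat{W}_\text{even}\widehat{W}_\text{odd}$, where $\widehat{W}_\text{odd}=\widehat{W}^{J_m}\cdots\widehat{W}^{J_1}$ and $\widehat{W}_\text{even}=\widehat{W}^{J_{m-1}}\cdots\widehat{W}^{J_2}$. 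The first step I would carry out is a decoupling lemma: by \hypref{asmp:algs:overlap} the odd blocks are pairwise separated, and since (by \hypref{asmp:WJperturbed-1} together with \eqref{eq:W-What-structure}) each $\widehat{W}^{J}$ acts as the identity on rows $i\in J^c$ and is supported on columns $\extended{J}$ for rows $i\in J$, the product $\widehat{W}_\text{odd}$ collapses to a matrix whose $i$-th row equals $\widehat{W}^{J_k}_{i,\cdot}$ when $i\in J_k$ (odd $k$) and equals the identity row otherwise; the same holds for $\widehat{W}_\text{even}$. I would prove this by induction on the number of factors, checking that when $i\in J_a$ the neighbouring factor's support $\extended{J_b}$ misses $J_a$ (shared boundary points are harmless, as they are interior to neither block).

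The second step is to compute the row-sum vector $v\eqdef\widehat{W}_\text{odd}\one$. From the decoupled structure, $v_j=1$ if $j$ lies in no odd block, while $v_j=\sum_{l\in\partial J_k}W^{J_k}_{j,l}+\epsilon|\extended{J_k}|$ if $j\in J_k$ for odd $k$. Distinguishing whether $j$ is private to $J_k$ or sits in an overlap, and using $|\extended{J_k}|\leq L+2$, this yields $v_j\leq\rate+\epsilon(L+2)$ for $j\in J_{-k}^c$ and $v_j\leq\maxsumW+\epsilon(L+2)$ for $j\in J_k\cap J_{-k}$. The crucial observation, which I would isolate as a separate lemma, is that every boundary point $\partial_\pm J_k$ of an even block $J_k$ is \emph{private} to an adjacent odd block: because $\J$ covers $I$ and non-consecutive blocks are separated (\hypref{asmp:algs:overlap}), the index $\partial_-J_k=\min J_k-1$ must belong to $J_{k-1}$ and to no other block, so $\partial_-J_k\in J_{-(k-1)}^c$ and hence $v_{\partial_-J_k}\leq\rate+\epsilon(L+2)$ (similarly for $\partial_+J_k$, with the end blocks governed by the conventions $W^{J_1}_{i,\partial_-J_1}=W^{J_m}_{i,\partial_+J_m}=0$). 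This is exactly where overlapping blocking pays off, and I expect it to be the main obstacle: one must rule out both that a boundary point lies in no block (impossible by covering) and that it lies in a second block (impossible by separation).

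The final step computes $[\widehat{\W}\one]_i=[\widehat{W}_\text{even}v]_i$ case by case. For $i\in J_{-k}^c$ with $k$ odd, $\widehat{W}_\text{even}$ acts as the identity on this row, so $[\widehat{\W}\one]_i=v_i\leq\rate+\epsilon(L+2)$, which is the middle bound. For $i$ in an even block $J_k$ (the other two cases) I would split $[\widehat{W}_\text{even}v]_i=\sum_{j\in\partial J_k}W^{J_k}_{i,j}v_j+\epsilon\sum_{j\in\extended{J_k}}v_j$. In the first sum the boundary values satisfy $v_{\partial_\pm J_k}\leq\rate+\epsilon(L+2)$ by the private-boundary lemma, while $\sum_{j\in\partial J_k}W^{J_k}_{i,j}$ is bounded by $\rate$ when $i\in J_{-k}^c$ and by $\maxsumW$ when $i\in J_k\cap J_{-k}$; this produces the leading terms $\rate^2$ and $\rate\maxsumW$ respectively, together with an $\epsilon(L+2)\rate$ (resp.\ $\epsilon(L+2)\maxsumW$) correction. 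In the second sum I bound $v_j\leq(1\vee\maxsumW)+\epsilon(L+2)$ on the at-most-$L$ interior columns and $v_j\leq\rate+\epsilon(L+2)$ on the two boundary columns, so the interior contributes $\epsilon L(1\vee\maxsumW)$, the boundary $2\epsilon\rate$, and the $\epsilon^2$ cross-terms assemble via $L(L+2)+2(L+2)=(L+2)^2$ into $\epsilon^2(L+2)^2$.

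Collecting these contributions reproduces the three displayed bounds exactly. Beyond the covering/separation lemma of the second step, the only remaining work is this $\epsilon$-bookkeeping, which is routine once the decoupled structure of $\widehat{W}_\text{odd},\widehat{W}_\text{even}$ and the private-boundary property are in hand; the analogous statement for even $m$ follows by swapping the roles of odd and even factors.
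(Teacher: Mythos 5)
Your proof is correct and takes essentially the same route as the paper's: the paper likewise factors $\widehat{\W}=\widehat{\W}_{\text{even}}\widehat{\W}_{\text{odd}}$, uses \hypref{asmp:algs:ordered_blocks}--\hypref{asmp:algs:overlap} to treat each block within a sweep separately (your ``decoupling lemma''), relies on the fact that boundary points of even blocks are private to adjacent odd blocks and vice versa (your ``private-boundary lemma''), and then performs the same $\epsilon$-bookkeeping. The only cosmetic difference is that the paper proves the slightly more general \autoref{lem:parallelPgibbs}, propagating a three-valued vector $r$ through the two sweeps, and obtains the theorem as the special case $r=\mathbf{1}$, whereas you work with $\mathbf{1}$ directly.
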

\begin{proof}
See Appendix~\ref{app:proofs:main}.
\end{proof}

\begin{theorem} \label{thm:sequentialPgibbsGeneral}
Assume \hypref{asmp:algs:ordered_blocks},~\hypref{asmp:algs:overlap}, \hypref{asmp:WJperturbed-1} and L-R.
Let $J_{-k}=\cup_{J\in \{ \J\setminus J_{k} \}}J$, $L=\max_{J\in\J}|J|$
and $L_{1}=\max_{k \in 2:m}|J_{k-1}\cap J_{k}|$. 
Then $\widehat{\mathcal{W}}$ satisfies
\[
[\widehat{\mathcal{W}}\mathbf{1}]_{i}\leq
\begin{cases}
  \rate+c\epsilon, & i\in J_{-k}^{c},\, k\in1:m,\\
  \ratetwo+2c\epsilon, & i\in J_{k-1}\cap J_{k},\, k\in2:m,
\end{cases}
\]
where,
\begin{alignat*}{1}
  \rate & =\max_{J_{k}\in\J}
  \max_{i\in  J_{-k}^{c}}
  \frac{W_{i,\partial_{+}J_{k}}^{J_{k}}}{1-W_{i,\partial_{-}J_{k}}^{J_{k}}} ,\\
  \ratetwo & = \max_{k\in 2:m}\max_{i\in J_{k-1}\cap J_{k}}\rate W_{i,\partial_{-}J_{k}}^{J_{k}}+W_{i,\partial_{+}J_{k}}^{J_{k}} ,\\
  c &= \frac{L(\maxsumW(\rate\vee1)\vee1)+1+\rate}{1-(L_{1}+1)\epsilon-\maxW},\\
  \maxsumW & =\max_{J\in\J}\max_{i\in J}\sum_{j\in\partial J}W_{i,j}^{J} ,\\
  \maxW & =\max_{k\in2:m}\max_{i\in J_{k}\cap J_{k-1}^{c}}W_{i,\partial_{-}J_{k}}^{J_{k}} ,
\end{alignat*}
provided that 
$\maxW + (L_{1}+1)\epsilon < 1$. (By convention set $W^{J_1}_{i,\partial_{-} J_1} = W^{J_m}_{i,\partial_{+} J_m } = 0$.)
\end{theorem}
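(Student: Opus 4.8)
The plan is to read off the row sums $[\widehat{\W}\one]_i$ by tracking the all‑ones vector as it is hit, left to right, by the factors of $\widehat{\W}=\widehat W^{J_m}\cdots\widehat W^{J_1}$ (the Wasserstein matrix of the sweep $\compP=P^{J_1}\cdots P^{J_m}$, in the reversed order of \autoref{thm:mr:contraction}). Setting $v^{(0)}=\one$ and $v^{(k)}=\widehat W^{J_k}v^{(k-1)}$ so that $[\widehat{\W}\one]_i=v^{(m)}_i$, the first observation is that, by the structure \eqref{eq:W-What-structure} and \hypref{asmp:WJperturbed-1}, $\widehat W^{J_k}$ acts as the identity on every row $i\in J_k^c$. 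Hence $v^{(k)}_i=v^{(k-1)}_i$ whenever $i\notin J_k$, so $v^{(m)}_i=v^{(k^\ast)}_i$ where $k^\ast(i)$ is the largest block index with $i\in J_{k^\ast}$. Under \hypref{asmp:algs:overlap} each index lies in at most two consecutive blocks, so $k^\ast(i)=k$ exactly when $i\in J_{-k}^c$ (private to $J_k$) or when $i\in J_{k-1}\cap J_k$; these are precisely the two cases in the statement.

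First I would record the one–step recursion. For $i\in J_k$, \eqref{eq:W-What-structure} and \hypref{asmp:WJperturbed-1} give
\[
v^{(k)}_i=\epsilon\sum_{j\in \extended{J_k}}v^{(k-1)}_j+W^{J_k}_{i,\partial_- J_k}\,v^{(k-1)}_{\partial_- J_k}+W^{J_k}_{i,\partial_+ J_k}\,v^{(k-1)}_{\partial_+ J_k},\qquad i\in J_k.
\]
Two structural facts coming from \hypref{asmp:algs:ordered_blocks}--\hypref{asmp:algs:overlap} drive everything. First, the right boundary $\partial_+ J_k$, together with every coordinate of $J_k$ not already contained in $J_{k-1}$, has not yet been touched when $\widehat W^{J_k}$ is applied, so its $v^{(k-1)}$-value is still $1$. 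Second, the left boundary $\partial_- J_k=\min(J_k)-1$ lies in $J_{k-1}$ but, by the separation $\max(J_{k-2})<\min(J_k)-1$ of non-consecutive blocks, lies in neither $J_{k-2}$ nor $J_k$; thus $\partial_- J_k$ is \emph{private} to $J_{k-1}$, its value $v^{(k-1)}_{\partial_- J_k}$ is already final, and it falls under the private case one induction step earlier.

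This suggests proving both bounds by induction on $k$, carrying the invariant that every private coordinate of an already-processed block satisfies $v^{(m)}_j\le\rate+c\epsilon$. Granting this at the feeding point $\partial_- J_k$, substitution into the recursion for a private $i\in J_{-k}^c$ yields a non-$\epsilon$ part $W^{J_k}_{i,\partial_- J_k}\rate+W^{J_k}_{i,\partial_+ J_k}\le\rate$, which holds exactly because $\rate\ge W^{J_k}_{i,\partial_+ J_k}/(1-W^{J_k}_{i,\partial_- J_k})$ by definition; the analogous substitution for $i\in J_{k-1}\cap J_k$ reproduces $\ratetwo=\rate\,W^{J_k}_{i,\partial_- J_k}+W^{J_k}_{i,\partial_+ J_k}$ from its definition. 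The base case is block $J_1$, where the convention $W^{J_1}_{i,\partial_- J_1}=0$ removes the left-boundary term and leaves $v^{(1)}_i\le\epsilon|\extended{J_1}|+W^{J_1}_{i,\partial_+ J_1}\le\rate+c\epsilon$; the last block is handled symmetrically via $W^{J_m}_{i,\partial_+ J_m}=0$.

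The hard part will be the bookkeeping that pins down $c$ and forces the hypothesis $\maxW+(\minovlap+1)\epsilon<1$. Closing the private step requires the leftover $\epsilon$-mass to fit under $c\epsilon$, i.e.\ $\sum_{j\in \extended{J_k}}v^{(k-1)}_j+W^{J_k}_{i,\partial_- J_k}\,c\le c$. The block sum has at most $L+2$ terms: the $\le\minovlap$ left-overlap coordinates each bounded (through their own one-step recursion) by $\maxsumW(\rate\vee1)$ up to $\epsilon$-corrections, the unprocessed coordinates contributing $1$ each, and the feeding point contributing $\rate$; this produces the numerator $L(\maxsumW(\rate\vee1)\vee1)+1+\rate$. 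The subtlety is that this sum is self-referential: the $\le\minovlap$ overlap coordinates and the feeding point carry their own $c\epsilon$-dependence, giving an $\epsilon$-feedback of coefficient at most $(\minovlap+1)\epsilon$, while the direct damping $W^{J_k}_{i,\partial_- J_k}\le\maxW$ for private $i\in J_k\cap J_{k-1}^c$ contributes a further $\maxW\,c$. Collecting these gives the fixed-point inequality $c\ge \text{(numerator)}+\big(\maxW+(\minovlap+1)\epsilon\big)c$, whose solution is the stated $c$ and whose solvability is exactly $\maxW+(\minovlap+1)\epsilon<1$. For the overlap coordinates $W^{J_k}_{i,\partial_- J_k}$ may be as large as $1$, which is why that case closes only with the slacker bound $2c\epsilon$ rather than $c\epsilon$.
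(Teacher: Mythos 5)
Your proposal is correct and follows essentially the same route as the paper's proof: a left-to-right induction over the blocks that tracks three types of coordinates (private, left overlap, right overlap), feeds the already-final private value $\rate + c\epsilon$ of $\partial_- J_k$ into block $J_k$'s recursion, uses the definitions of $\rate$ and $\ratetwo$ to close the non-$\epsilon$ part, and determines $c$ via exactly the same fixed-point inequality whose solvability is the hypothesis $\maxW + (L_1+1)\epsilon < 1$. The only (cosmetic) deviation is writing $l_1$ where the statement uses $L_1$ for the maximal overlap.
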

\begin{proof}
See Appendix~\ref{app:proofs:main}.
\end{proof}

In order to prove Theorem~\ref{thm:simplenonidealrates} we can now make use of
\autoref{prop:coupling-HMM} (assuming \hypref{asmp:strong-mixing-m}--\hypref{asmp:strong-mixing-g} and \hypref{asmp:algs:ordered_blocks}--\hypref{asmp:commonblocks})
to identify the constants of Theorems~\ref{thm:parallelPgibbsGeneral} and \ref{thm:sequentialPgibbsGeneral}. 
However, a technical detail is to handle the dependence on the maximum block size $L=\max_{J\in\J}|J|$ and maximum overlap
$L_{1}=\max_{k \in 2:m}|J_{k-1}\cap J_{k}|$ (of \autoref{thm:sequentialPgibbsGeneral}). Indeed, a direct application of Theorems~\ref{thm:parallelPgibbsGeneral} and \ref{thm:sequentialPgibbsGeneral}
would suggest that the norm of $\widehat \W$ grows, respectively, quadraticly or linearly with $\epsilon L$.
To avoid this issue we will make use of the following trick: when applying Theorems~\ref{thm:parallelPgibbsGeneral} and \ref{thm:sequentialPgibbsGeneral}
we do not consider the original \hmm formulation, but an equivalent model that lumps consecutive states together (for the sake of the analysis),
thus effectively reducing the size of the blocks. Under \hypref{asmp:algs:ordered_blocks}--\hypref{asmp:commonblocks},
each block can be split into three distinct sections which are the left overlap, the middle of the block, and the right overlap. (See \autoref{fig:algs:example_blocking}.)  The exception are the end blocks which are split into two sections. By viewing each of these
parts as a single lumped state, we reduce the block size to $3$ and maximum overlap to $1$. 
For this scheme, the lumped states are given by
\begin{align*}
  \Xi_1 &= X_{1:L-p}, &\Xi_2 &= X_{L-p+1:L}, &   \\
 \Xi_3 &= X_{L+1:2L-2p}, & \Xi_4 &= X_{2L-2p+1:2L-p},& \Xi_5 &= X_{2L-p+1:3L-3p}, \\
 & \vdots 
\end{align*}
where $\Xi_1$ and $\Xi_2$ are the states of the two sections of block $1$, $\Xi_2$, $\Xi_3$ and $\Xi_4$ are the states of block $2$ etc. More precisely:
\begin{remark}
The $\Xi$-system groups the random variables $X_1,\ldots,X_{\T}$ of the $X$-system as $\Xi_{1},\ldots,\Xi_{2m-1}$, noting that 
\hypref{asmp:commonblocks} implies that $n = (L-p)m + p$, where
\begin{align*}
 \begin{cases}
   \Xi_1 = X_{1:L-p}, & \\
   \Xi_{2i} = X_{(L-p)i+1 : (L-p)i+p}, & 1 \leq i < m, \\
   \Xi_{2i-1} = X_{(L-p)(i-1)+p+1 : (L-p)i}, & 1 < i < m, \\
   \Xi_{2m-1} = X_{(L-p)(m-1)+p+1 : (L-p)m+p}.
 \end{cases}
\end{align*}
The index set for the $\Xi$-system is $I_{\Xi}=\{1,\ldots,2m-1\}$ and the cover $\J_\Xi$ of $I_{\Xi}$ has $m$ sets with (set $k$) $J_{\Xi,k} = \{2k-2, 2k-1, 2k\}\cap I_{\Xi}$.
\label{rem:Xisysdefn}
\end{remark}

To find a Wasserstein matrix for the $\Xi$-system we note that any conditional density of the states $\Xi_i$, $i\in J_{\Xi,k}$, conditionally on the
boundaries of the block $J_{\Xi,k}$ and the observation pertaining to that block,
is coupled analogously to the $X$-system; see the proof of \autoref{prop:coupling-HMM}
in the appendix.
Analogously to \autoref{prop:coupling-HMM}, a Wasserstein matrix for block $J_{\Xi,k}$ of the $\Xi$-system is thus given by the $(2m-1) \times (2m-1)$ matrix
\begin{align}
  \label{eq:lumped-W-ideal}
  W_{\Xi}^k = \left[ {\scriptsize
      \begin{array}{ccccccccc}
        1& & & & & & & & \\
        &\ddots & & & & & & & \\
        && 1& & & & & & \\
        && \alpha^{\lfloor h^{-1} \rfloor} &0 & 0& 0& \alpha^{\lfloor h^{-1} (L-p+1) \rfloor} & & \\
        && \alpha^{\lfloor h^{-1}(p+1) \rfloor} & 0 & 0& 0& \alpha^{\lfloor h^{-1}(p+1) \rfloor} & & \\
        && \alpha^{\lfloor h^{-1} (L-p+1) \rfloor} &0 & 0& 0& \alpha^{\lfloor h^{-1} \rfloor} & & \\
        && & & & & 1& & \\
        && & & & & & \ddots& \\
        && & & && & & 1
      \end{array} }
  \right] \eqsp,
\end{align}
where the ``square of zeros'' correspond to rows/columns $2k-2$, $2k-1$, and $2k$ (with obvious modifications for $k = 1$ or $k = 2m-1$).
The proof is omitted for brevity, but follows analogously to the proof of \autoref{prop:coupling-HMM}.

As a final ingredient to prove Theorem~\ref{thm:simplenonidealrates} we need to verify condition \hypref{asmp:WJperturbed-1}
for the \pg kernel.

\begin{proposition}
  \label{lem:pgibbs-ergodicity}
  Assume \hypref{asmp:strong-mixing-m}, \hypref{asmp:strong-mixing-g} and assume that the bootstrap proposal kernel
  is used in Algorithm~\ref{alg:cpf}: $r(x,x') = m(x,x')$. Then,
  \begin{enumerate}
  \item For any $J = \crange{\si}{\ui} \subseteq I$ with $\ui \geq \si$ and any $\Np \geq 2$,
    \[
    \kernel{Q_N^J}{x_{J_{+}}}{\rmd x_{J}'} \geq (1-\epsilon(\Np,|J|)) \eqsp \phi^J_{x}(\rmd x_{J}')
    \]
    where $\epsilon(\Np,L) = 1- (1-\frac{1}{c(N-1)+1})^L$,
    \[
    c = \left( 2 \delta \frac{\sigma_{+}}{\sigma_{-}} - 1 \right)^{-1} \in (0,1]
    \]
    and $\delta, \sigma_{-}$ and $\sigma_{+}$ are defined in
    \hypref{asmp:strong-mixing-m}, \hypref{asmp:strong-mixing-g}.
  \item Assume further \hypref{asmp:algs:ordered_blocks}--\hypref{asmp:commonblocks}. For $k = \range{1}{m}$, let
    Let $W^k_\Xi$ be given by \eqref{eq:lumped-W-ideal} (\ie, $W^k_\Xi$ is a Wasserstein matrix, based on the $\Xi$-system, for the ideal Gibbs kernel updating block~$J_k$).
    Then $\widehat W_\Xi^k = [\widehat{W}^k_{\Xi,i,j}]_{i,j\in 1:2m-1}$ with
    \begin{align}
      \label{eq:lumped-W-epsilon}
      \widehat{W}^k_{\Xi,i,j} = W^k_{\Xi,i,j} + \epsilon(N,|J_k|) \mathbb{I}_{\left[{\scriptsize \begin{array}{c} 2k-2 \leq i \leq 2k \\ 2k-3 \leq j \leq 2k+1 \end{array} }\right]}
    \end{align}
    is a Wasserstein matrix for the \pg kernel $Q_N^{J_k}$. 
  \end{enumerate}
\end{proposition}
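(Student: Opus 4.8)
The plan is to treat the two parts separately, noting that the second is a soft consequence of the first. Part~1, the one-block minorisation, is the analytic heart; Part~2 then follows from it by a general ``mixture-plus-coupling'' argument that converts any Doeblin-type minorisation into the additive $\epsilon$-perturbation of \hypref{asmp:WJperturbed-1}, applied in the lumped $\Xi$-coordinates of \autoref{rem:Xisysdefn}.

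For Part~1 I would first relabel the block as $\crange{1}{L}$ with $L=\card{J}$, write $x_0=x_{\si-1}^\star$ and $x_{L+1}=x_{\ui+1}^\star$ for the fixed boundaries, and simplify the weights via the bootstrap choice $r=m$: the interior weights collapse to $\ewght{t}{i}=g(\epart{t}{i},y_t)$ and the terminal weight to $\ewght{L}{i}=g(\epart{L}{i},y_L)\,m(\epart{L}{i},x_{L+1}^\star)$, while the target is $\phi_x^J(\rmd z_{1:L})\propto \{\prod_{t=1}^L m(z_{t-1},z_t)g(z_t,y_t)\}m(z_L,x_{L+1}^\star)$. The core estimate is a \emph{pointwise} lower bound on the density of the \pg output relative to $\phi_x^J$, which I would establish by induction on $L$, each level contributing one factor $1-\tfrac{1}{c(\Np-1)+1}$. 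The base case $L=1$ already exhibits the constant: writing $\omega(z)=g(z,y_1)m(z,x_2^\star)$ and $Z=\int \nu(\rmd z)\,m(x_0,z)\omega(z)$, the absolutely continuous part of $Q_\Np^J$ has density $(\Np-1)m(x_0,z)\omega(z)\,\E[(\omega^\Np+\omega(z)+S)^{-1}]$ with $S$ a sum of $\Np-2$ i.i.d.\ free weights; Jensen's inequality gives density ratio $\ge (\Np-1)Z/\{\omega^\Np+\omega(z)+(\Np-2)Z\}$, and maximising the denominator over $\omega^\Np,\omega(z)\le \omega_{\max}$ leaves a deficit of at most $\tfrac{1}{c(\Np-1)+1}$ precisely when $\omega_{\max}/Z\le \delta\,\sigma_+/\sigma_-$. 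The inductive step peels off one time level using the backward (genealogical) decomposition of the selected trajectory, reproducing the same Jensen-plus-weight-ratio estimate and multiplying the ratio by a further $1-\tfrac{1}{c(\Np-1)+1}$; after $L$ levels this yields $(1-\tfrac{1}{c(\Np-1)+1})^L=1-\epsilon(\Np,L)$.

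The weight-ratio input $\omega_{\max}/Z\le \delta\,\sigma_+/\sigma_-$ is where the strong-mixing assumptions enter: I would bound $\omega_{\max}\le \delta^{1/\h}\,g_-\,\sigma_+$ (with $g_-=\inf_x g(x,y)$) using \hypref{asmp:strong-mixing-m-plus} and \hypref{asmp:strong-mixing-g}, and bound $Z$ from below by discarding the observation factors down to $g_-$ and invoking the $\h$-step minorisation \hypref{asmp:strong-mixing-m-minus} to control the transition mass that survives the terminal constraint $m(\cdot,x_{L+1}^\star)$. This last point is the main obstacle: unlike the interior weights, the terminal factor $m(\cdot,x_{\ui+1}^\star)$ admits \emph{no} pointwise lower bound, so $\omega_{\max}/\omega_{\min}$ is genuinely infinite and the naive per-step reference-selection bound fails at the right boundary; it is exactly the integrated $\h$-step bound of \hypref{asmp:strong-mixing-m-minus} that rescues $Z$ and forces the combination $\delta\,\sigma_+/\sigma_-$, hence $c=(2\delta\,\sigma_+/\sigma_- -1)^{-1}\in(0,1]$, and explains the $\lfloor \h^{-1}\cdot\rfloor$ flooring in the companion matrices. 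Arranging the induction so that exactly $L$ identical factors emerge while paying the $\h$-step price only through the constant $c$ is the delicate bookkeeping I expect to spend the most effort on.

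For Part~2 I would no longer look inside the particle system. Given Part~1, write $\kernel{Q_\Np^J}{x_{\extended J}}{\cdot}=(1-\epsilon)\phi_x^J(\cdot)+\epsilon R_x(\cdot)$ with $R_x$ a genuine probability kernel and $\epsilon=\epsilon(\Np,\card J)$. For $x,z$ with $x_{-j}=z_{-j}$, decompose the increment of the non-ideal kernel \eqref{eq:gibbs:P-def2} into the term with $x_{J^c}$ varying (absorbed by the $\delta_{x_{J^c}}$ factor, i.e.\ the $J^c$-rows of $W^J$, for which $W^J_{j,j}=1$ when $j\in J^c$) and the term with the kernel acting on a \emph{fixed} function; the latter splits as $(1-\epsilon)[\phi_x^J-\phi_z^J]$, bounded by the ideal Wasserstein matrix $W^J$ of \autoref{lem:structure-W-general}, plus $\epsilon[R_x-R_z]$, which under any coupling of $R_x,R_z$ contributes at most $\epsilon\sum_{i\in J}\osc_i(f)$ and is present only when $j\in\extended J$ (since $Q^J$ depends on $x$ only through $x_{\extended J}$). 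Collecting terms gives $\osc_j(\widehat P^J f)\le \sum_i \osc_i(f)\,\widehat W_{i,j}^J$ with $\widehat W_{i,j}^J=W_{i,j}^J+\epsilon\,\I_{[i\in J,\,j\in\extended J]}$, i.e.\ \hypref{asmp:WJperturbed-1}. Specialising to the $\Xi$-system, where $J=J_{\Xi,k}=\{2k-2,2k-1,2k\}$ so that $\extended J=\{2k-3,\dots,2k+1\}$, and inserting the ideal matrix $W_\Xi^k$ from \eqref{eq:lumped-W-ideal} with $\epsilon=\epsilon(\Np,\card{J_k})$, reproduces \eqref{eq:lumped-W-epsilon} verbatim and completes the proof.
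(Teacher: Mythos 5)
Your Part~2 is correct and is essentially the paper's own argument: the paper proves a small general lemma stating that if $Q(x,\cdot)\geq(1-\epsilon)P(x,\cdot)$ for all $x$, then adding $\epsilon$ to a coupling-based Wasserstein matrix for $P$ yields one for $Q$, via exactly your mixture decomposition $Q=(1-\epsilon)P+\epsilon R$ and the coupling $(1-\epsilon)\Cpl_{j,x,z}+\epsilon R_{x,z}$ of $Q(x,\cdot)$ and $Q(z,\cdot)$; your localisation of the perturbation to rows $i\in J$ and columns $j\in\extended{J}$ (because $Q^J$ only moves $x_J$ and only depends on $x_{\extended{J}}$) is the same step the paper takes when specialising to the $\Xi$-system.

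The gap is in Part~1. The paper does not re-derive the minorisation of the \pg kernel from scratch: it invokes Theorem~1 of \citet{LindstenDM:2015}, which already gives
$\kernel{Q_\Np^J}{x_{\extended{J}}}{A}\geq(1-\epsilon(\Np,\card{J}))\,\phi_x^J(A)$ with
$\epsilon(\Np,\card{J})=1-\prod_{t=\si}^{\ui}\frac{\Np-1}{2B_t^J+\Np-2}$,
and then devotes its entire effort to proving $B_t^J\leq\delta\sigma_{+}/\sigma_{-}$ under \hypref{asmp:strong-mixing-m}--\hypref{asmp:strong-mixing-g}, where the new difficulty is the conditioning on the endpoint $x_{\ui+1}$ (handled by splitting into the cases $t\leq\ui-\h+1$ and $t>\ui-\h+1$). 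You instead propose to re-prove the whole minorisation by induction on $L$, and that is where the unproved content sits. Your base case is correct and coincides with the bound on $B_{\si}^J$ for $L=1$, but the inductive step---``peel off one time level \ldots multiplying the ratio by a further factor''---is precisely the main theorem of \citet{LindstenDM:2015} and does not follow by simple peeling: resampling couples the time levels (selection probabilities at different times share the same random weight sums, and the law of the surviving ancestral trajectory is not a product of one-step conditionals), and when the induction is carried out correctly the quantity to be controlled at time $t$ is not the single terminal ratio $\omega_{\max}/Z$ but the whole family
$\max_{0\leq\ell<\ui-t}\sup_{x_t}p(y_{t:t+\ell}\mid x_t)/p(y_{t:t+\ell}\mid x_{\si-1},y_{\si:t-1})$
together with the endpoint-conditioned ratio
$\sup_{x_t}p(y_{t:\ui},x_{\ui+1}\mid x_t)/p(y_{t:\ui},x_{\ui+1}\mid x_{\si-1},y_{\si:t-1})$---exactly the $B_t^J$ of the cited theorem. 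What you defer as ``delicate bookkeeping'' is therefore the core of the proof, not a technicality. Either carry out that induction in full (essentially reproducing the cited theorem, adapted to the terminal factor $m(\cdot,x_{\ui+1}^\star)$), or do what the paper does: cite it, and spend the proof on the genuinely new part, namely the bound $B_t^J\leq\delta\sigma_{+}/\sigma_{-}$, for which your ``$\h$-step rescue of $Z$'' is the right idea but must be executed for every $t\in J$ and every intermediate horizon $\ell$, not only for the terminal weight.
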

\begin{proof}
  See Appendix~\ref{app:proofs:additional}.
\end{proof}

\begin{remark}
  We emphasise that the lumping of state variables is used only for the sake of analysis,
  to improve the contraction rates in Theorems~\ref{thm:parallelPgibbsGeneral} and \ref{thm:sequentialPgibbsGeneral}
  by avoiding a dependence on the block size.
  For all practical purposes the lumping has no effect: when implementing the \pg kernel (Algorithm~\ref{alg:cpf}) we still use
  the original state variables. Consequently, lumping does not affect the ergodicity or the convergence rate of the \pg kernel.
  In particular, note that $\epsilon$ in \eqref{eq:lumped-W-epsilon} depends on the size of block $k$
  as expressed in the $X$-system (\,$|J_k|$) and not in the lumped $\Xi$-system (\,$|J_{\Xi,k}|$).
\end{remark}

We conclude this section by putting the pieces together to prove \autoref{thm:simplenonidealrates}.
\begin{proof}[Proof (\autoref{thm:simplenonidealrates})]
\autoref{thm:simplenonidealrates} is established by applying Theorems~\ref{thm:parallelPgibbsGeneral} and \ref{thm:sequentialPgibbsGeneral} to the $\Xi$-system.
The bound \eqref{eq:thm:simplenonidealrates} is established by using \eqref{eq:wassergivescontraction} for the $\Xi$-system:
\[
|\mu \compP^k f - \nu \compP^k f| \leq
\left( \sum_{i=1}^{2m-1} \sup_{ \{ \Xi,\check{\Xi} \in \Xset^{\T} : \Xi_{-i} =  \check{\Xi}_{-i} \} } | f(\Xi) - f(\check{\Xi})| \right)
\| \W_\Xi \|_\infty^{k}
\max_{j \in 1:2m-1} \Cpl({\Xi}_j \neq \check{\Xi}_j).
\]
Since $(\Xi_1,\ldots,\Xi_{2m-1})\in \Xset^{\T}$, we can crudely bound
the sum 
with $\sum_{i=1}^{\T} \sup_{\{x,y\in \Xset^{\T}: x_{-i}=y_{-i}\}} |f(x)-f(y)|$.
The final factor 
is also crudely bounded by
$\max_{j \in I} \Cpl({\Xi}_j \neq \check{\Xi}_j) \leq \Cpl({\Xi} \neq \check{\Xi})$, for any coupling $\Cpl$ of $\mu$ and $\nu$ with $(\Xi,\check{\Xi})\sim \Cpl$.

Now it remains to derive the expression for $\rate_{\mathrm{PG}}$ as an upper bound on $\|\W_\Xi \|_\infty$
for both cases PAR and L-R.
We detail the case PAR only, as L-R follows by analogous arguments. For case PAR we use \autoref{thm:parallelPgibbsGeneral} for the $\Xi$-system. 
\autoref{thm:parallelPgibbsGeneral} is applicable for the $\Xi$-system since the $\Xi$-system
satisfies \hypref{asmp:algs:ordered_blocks},~\hypref{asmp:algs:overlap} (see \autoref{rem:Xisysdefn}) and \hypref{asmp:WJperturbed-1}. The fact that the $\Xi$-system satisfies \hypref{asmp:WJperturbed-1} follows from \autoref{lem:pgibbs-ergodicity}.
Each block $J_{\Xi,k}$ of the $\Xi$-system has $3$ elements (except the initial and final blocks which both have $2$ elements each);
see \autoref{rem:Xisysdefn}. The specific values of $\rate$ and $\maxsumW$ of \autoref{thm:parallelPgibbsGeneral}
follow from this simple $3$-element block structure and the declared Wasserstein matrix in \eqref{eq:lumped-W-ideal}. The coefficient of the $\epsilon$-term in \eqref{eq:thm:simplenonidealrates_2} follows from a trivial bound of the three separate $\epsilon$-coefficients given in \autoref{thm:parallelPgibbsGeneral} using $L=3$.
 \end{proof}





\appendix
\section{Proofs of the Main Theorems} \label{app:proofs:main}

\begin{lemma} \label{lem:mask}
  Let $\J = \{J_1, \dots, J_m\}$ be an arbitrary cover of $I$. For each $J\in\J$, let $W^J$ be a matrix with structure \eqref{eq:W-What-structure}
  and let $\W = W^{J_m}\cdots W^{J_1}$.
  For $j \in I$, define, \(    a_j \eqdef \min \{k  : j \in J_k\} \) and \(  b_j = \min \{k  : j \in \partial J_k\}\),
  with the convention that $\min\{\emptyset\} = \infty$.
  If $a_j < b_j$, then $\W e_j = 0$.
\end{lemma}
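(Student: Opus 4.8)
The plan is to compute the column $\W e_j$ directly, by tracking the partial products against $e_j$ one block at a time. Define $v^{(0)} \eqdef e_j$ and $v^{(k)} \eqdef W^{J_k} v^{(k-1)}$, so that $v^{(m)} = W^{J_m}\cdots W^{J_1} e_j = \W e_j$ is exactly the vector we want to vanish. The whole argument rests on two structural features read off from \eqref{eq:W-What-structure}: first, every row $i \in J^c$ of $W^J$ equals $e_i^{\+}$, so left-multiplication by $W^J$ leaves all coordinates indexed by $J^c$ unchanged; second, every row $i \in J$ of $W^J$ is supported on the columns $\partial J$, and since $\partial J \subseteq J^c$, the updated value $v^{(k)}_i$ for $i \in J$ is a linear combination only of coordinates that the same multiplication preserves.

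First I would show by induction on $k$ that $v^{(k)} = e_j$ for all $0 \leq k \leq a_j - 1$. The hypothesis $a_j < b_j$ guarantees that $j \notin \partial J_k$ for every $k \leq a_j$ (since $k < b_j = \min\{k : j \in \partial J_k\}$), and $j \notin J_k$ for every $k < a_j$ (since $a_j = \min\{k : j \in J_k\}$). Hence, when processing a block $J_k$ with $k < a_j$, the coordinate $j$ lies in $J_k^c$ and is preserved, while every coordinate $i \in J_k$ is recomputed as $\sum_{\ell \in \partial J_k} W^{J_k}_{i,\ell}\, v^{(k-1)}_\ell$, which is zero because $v^{(k-1)} = e_j$ is supported on $\{j\}$ and $j \notin \partial J_k$. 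This keeps $v^{(k)} = e_j$.

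The decisive step is $k = a_j$, the first block containing $j$. Here $j \in J_{a_j}$, so coordinate $j$ (and every other coordinate $i \in J_{a_j}$) is overwritten by $\sum_{\ell \in \partial J_{a_j}} W^{J_{a_j}}_{i,\ell}\, v^{(a_j-1)}_\ell$. Since $v^{(a_j-1)} = e_j$ and $a_j < b_j$ forces $j \notin \partial J_{a_j}$, this sum is zero for every $i \in J_{a_j}$; meanwhile every coordinate $i \in J_{a_j}^c$ is preserved and equals $(e_j)_i = 0$ because $j \in J_{a_j}$. Hence $v^{(a_j)} = 0$, and since $W^{J_k} 0 = 0$ the zero vector propagates through the remaining factors, giving $\W e_j = v^{(m)} = 0$.

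The only real care required is bookkeeping: keeping the product order straight, so that $W^{J_1}$ acts first on $e_j$ (matching the sweep order $J_1, J_2, \dots$), and using $\partial J \subseteq J^c$ to be sure that boundary-driven updates read only coordinates that the same multiplication leaves untouched. Once these are in place the argument is purely combinatorial, and the case $b_j = \infty$ (coordinate $j$ never a boundary point of any block) is covered automatically, since the proof uses only the inequality $a_j < b_j$.
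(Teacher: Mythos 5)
Your proof is correct and is essentially the paper's own argument: both rest on the observation that, for $j \notin \partial J$, the structure \eqref{eq:W-What-structure} gives $W^J e_j = \I_{[j\in J^c]}\, e_j$, which is applied to every block $k < a_j$ (leaving $e_j$ unchanged), then at $k = a_j$ (annihilating the column, since $j \in J_{a_j}$ but $j \notin \partial J_{a_j}$), after which the zero vector propagates through the remaining factors. Your coordinate-by-coordinate induction merely unpacks that one-line identity, so the two proofs coincide in substance.
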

\begin{proof}
  Due to the structure of $W^J$ from \eqref{eq:W-What-structure} it follows that, for $j \notin \partial J$,
  \(
  W^J e_j = \I_{[j \in J^c]} e_j.
  \)
  If $a_j < b_j$, then 
  \begin{align*}
  \W e_j &= W^{J_m} \cdots W^{J_{b_j}} \cdots W^{J_{a_j}} \cdots W^{J_1} e_j \\
  &= 
  W^{J_m} \cdots W^{J_{b_j}} \cdots W^{J_{a_j}} e_j = 0,
  \end{align*}
  where the first equality follows from the fact that for any $k < a_j < b_j$, $j \notin J_k \cup \partial J_k$ by the definition of $a_j$ and $b_j$.
\end{proof}

\subsection{Proof of \autoref{thm:mr:contraction}}

From \autoref{lem:mask} it is clear that $\W e_j = 0$ for any $j \notin \partial$ (for which $b_j = \infty$).
Hence, with $M$ being the binary mask matrix with elements $M_{i,j}=\mathbb{I}_{[i=j\in\partial]}$, it holds that
 $\mathcal{W}r=\mathcal{W}(Mr)$ for any vector $r$.
Thus $\mathcal{W}\mathcal{W}^{k-1}r=\mathcal{W}(M\mathcal{W})^{k-1}r$
and hence $\| \mathcal{W}^{k}\|_\infty \leq \| \mathcal{W}\|_\infty \| M\mathcal{W}\|_\infty^{k-1}$.

Next, we consider
\(
  \| M\W \|_\infty = \max_{i\in\bp} \left\{ e_i^\+ \W \one \right\}.
\)
Define recursively, $L_0 \eqdef \emptyset$ and  $r_0 \eqdef \one$ and
\begin{align}
  \label{eq:pf:Lk-rk-def}
  \begin{cases}
    L_k \eqdef L_{k-1} \cup J_k, \\
    r_k \eqdef W^{J_k} r_{k-1},
  \end{cases} \text{for } k=\range{1}{m} .
\end{align}
We thus have $r_m = \W \one$.
Hence, the result follows if, for $k = \range{0}{m}$,
\begin{align}
  \label{eq:pf:rk-result}
  [r_k]_i \leq
  \begin{cases}
    1 & \text{if } i \in I\setminus L_k, \\
    \rate & \text{if } i \in L_k \cap \bp,
  \end{cases}
\end{align}
\noindent (note that nothing is said about $[r_k]_i$ for $i \in L_k \setminus \bp$).
Indeed, the fact that $\J$ is a cover of $I$ implies that $L_m = I$. Hence $[\W \one]_i = [r_m]_i \leq \rate$ for all $i \in \bp$.

It remains to prove \eqref{eq:pf:rk-result}.
For $k = 0$ the hypothesis is true by construction. We proceed inductively. Hence, assume that
the hypothesis is true for $k-1$. Consider
\begin{align}\label{eq:pf:r_k}
  [r_k]_i = [W^{J_k} r_{k-1}]_i =
  \begin{cases}
    [r_{k-1}]_i & \text{if } i \in J^c_k, \\
    \sum_{j\in\bp{J_k}} W_{i,j}^{J_k} [r_{k-1}]_j & \text{if } i \in J_k,
  \end{cases}
\end{align}
where we have made use of the structure of the Wasserstein matrix from Lemma~\ref{lem:structure-W-general}.
We need to consider different cases:
\begin{enumerate}
\item $i \in I\setminus L_k \Rightarrow [r_k]_i = [r_{k-1}]_i \leq 1$, where we first use \eqref{eq:pf:r_k}
  and then the induction hypothesis, and the fact that $L_k = L_{k-1} \cup J_k$.
\item $i \in ( L_{k-1}\setminus J_k ) \cap \bp \Rightarrow [r_k]_i = [r_{k-1}]_i \leq \rate$, where, again, we use
  \eqref{eq:pf:r_k} and the induction hypothesis.
\item $i \in J_k \cap \bp \Rightarrow
  [r_k]_i = \sum_{j\in\bp{J}} W_{i,j}^{J_k} [r_{k-1}]_j \leq \sum_{j\in\bp{J}} W_{i,j}^{J_k} \leq \rate$, where we use
  the fact that for $j \in \bp$, $[r_{k-1}]_j \leq 1$ and assumption~\hypref{asmp:mr:lambda-ideal} for the final inequality.
\end{enumerate}
This completes the proof.
\hfill $\qedsymbol$

\subsection{Proof of \autoref{thm:idealStructuredRates}}
\textbf{Case L-R:} The proof is similar to that of \autoref{thm:mr:contraction}, but to exploit the structure of the L-R
sampler we define the mask matrix $M$ as $M_{i,j}=\mathbb{I}_{[i=j\in\partial_{+}]}$
where $\partial_{+}=\cup_{J\in\mathcal{J}}\partial_{+}J$ is the set
of \emph{right boundary points}, only, of all blocks. If $j = \partial_{-} J_k$ is the left boundary of some block $k$, say, then
$j \in J_{k-1}$ by definition of the L-R sampler. Hence, from \autoref{lem:mask}
it follows that
$\mathcal{W}r=\mathcal{W}(Mr)$ for any vector $r$.
Thus, $\| \mathcal{W}^{k}\|_\infty \leq \| \mathcal{W}\|_\infty \| M\mathcal{W}\|_\infty ^{k-1}$.

Define $L_k$ and $r_k$ as in \eqref{eq:pf:Lk-rk-def}. Note that \eqref{eq:pf:rk-result} and \eqref{eq:pf:r_k}
hold for any cover $\J$, and in particular for the L-R sampler. Thus, for $i \in J_k$,
\begin{align}
  \label{eq:pf:rk-lr}
  [r_k]_i =  W_{i,\partial_{-} J_k}^{J_k} [r_{k-1}]_{\partial_{-}J_k} + W_{i,\partial_{+} J_k}^{J_k} [r_{k-1}]_{\partial_{+}J_k}
  \leq  W_{i,\partial_{-} J_k}^{J_k} \lambda + W_{i,\partial_{+} J_k}^{J_k} \eqsp,
\end{align}
where the inequality follows from the fact that, for the L-R sampler, $\partial_{-} J_k \in L_{k-1}$ and $\partial_{+} J_k \in I \setminus L_{k}$
and by using \eqref{eq:pf:rk-result}.
Hence, $[r_k]_{\partial_{+}J_{k-1}} \leq \beta$.
Since, for any $k \in 1:m$, $\partial_{+} J_{k-1}$ lies in at most one block (namely $J_k$), we conclude that $[r_m]_i \leq \beta$ for any
$i \in \partial_{+}$. Thus $\| M\mathcal{W}\|_\infty \leq \beta$.
The bound $\| \mathcal{W}\|_\infty \leq 1+ \lambda$ follows similarly from \eqref{eq:pf:rk-lr}.

\textbf{Case PAR:} 
For the PAR sampler we
redefine the mask $M$ to be $M_{i,j}=\mathbb{I}_{[i=j\in\partial_{odd}]}$,
where $\partial_{odd}$ is the set of all boundary points of odd blocks,
\ie, $\cup_{k\text{ odd}}\partial J_{k}$. Since any boundary point of an even block is the interior
of some odd block, it follows from \autoref{lem:mask} that
$\mathcal{W}r=\mathcal{W}(Mr)$ for any vector $r$.\footnote{This is most easily seen by noting that the PAR sampler is equivalent to a two-block sampler, comprising the composite blocks: $J_{\text{odd}} \eqdef \bigcup_{k \text{ odd}} J_k$ and $J_{\text{even}} \eqdef \bigcup_{k \text{ even}} J_k$. The statement
follows by applying \autoref{lem:mask} to these two blocks.}
%
%
%
To complete the proof we thus need to bound $\| M\mathcal{W}\|_\infty \leq\lambda^{2}$.
Let $i \in \partial_{\text{odd}}$ and let $J_k$ be the block such that $i \in J_k$. Note that $k$ is even and $J_k$ is
the only block containing~$i$. Thus,
\(
  [\mathcal{W}\mathbf{1}]_{i}=[\mathcal{W}_{even}\mathcal{W}_{odd}\mathbf{1}]_{i}=[W^{J_{k}}\mathcal{W}_{odd}\mathbf{1}]_{i} .
\)
Now assume $[\mathcal{W}_{odd}\mathbf{1}]_{i}\leq\lambda$
for all $i$ which are boundaries of even blocks (a fact we will prove next). Thus
\[
[W^{J_{k}}\mathcal{W}_{odd}\mathbf{1}]_{i}=W_{i,\partial_{-}J_{k}}^{J_{k}}[\mathcal{W}_{odd}\mathbf{1}]_{\partial_{-}J_{k}}+W_{i,\partial_{+}J_{k}}^{J_{k}}[W^{J_{k}}\mathcal{W}_{odd}\mathbf{1}]_{\partial_{+}J_{k}} \leq \lambda^{2}.
\]
To conclude: let $i$ be a boundary of an even block and let $i\in J_k$, the only odd block containing~$i$. Then $[\mathcal{W}_{odd}\mathbf{1}]_{i}=[W^{J_k}\mathbf{1}]_{i}=W^{J_k}_{i,\partial_{-}J_k}+W^{J_k}_{i,\partial_{+}J_k} \leq \lambda$.
\hfill $\qedsymbol$




\subsection{Proof of  \autoref{thm:parallelPgibbsGeneral}}
We prove the following more general version (while  \autoref{thm:parallelPgibbsGeneral} was stated for $r=\mathbf{1}$).
\begin{lemma} \label{lem:parallelPgibbs}
Let vector $r$ satisfy
\[
[r]_{i} =
\begin{cases}
  a, & i\in J_{-k}^{c}, \, k \text{ even}, \\
  a', & i\in J_{-k}^{c}, \, k \text{ odd}, \\
  b, & i\in J_{k}\cap J_{-k}, \, k \text{ even}, 
\end{cases}
\]
for some positive constants $a,a',b$. Then 
\[
[\widehat{\W}r]_{i}\leq
\begin{cases}
  \rate^{2}a+\epsilon\left(\rate c+2\rate a+2\epsilon c+La(1\vee\maxsumW)+\epsilon Lc\right) , 
  & i\in J_{-k}^{c}, \, k \text{ even}, \\
  \rate a+\epsilon\left(2a+L(a'\vee b)\right) ,
  & i\in J_{-k}^{c}, \, k \text{ odd}, \\
  \rate\maxsumW a+\epsilon\left(\maxsumW c+2\rate a+2\epsilon c+La(1\vee\maxsumW)+\epsilon Lc\right),
  & i\in J_{k}\cap J_{-k}, \, k \text{ even}, 
\end{cases}
\]
where $c=2a+L(a'\vee b)$ and the remaining constants were defined in \autoref{thm:parallelPgibbsGeneral}.
\end{lemma}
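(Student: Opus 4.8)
The plan is to exploit the two-phase structure of the PAR sweep. Since the Wasserstein matrix of a composition is the reverse product of the individual Wasserstein matrices, one complete sweep has Wasserstein matrix $\widehat{\W} = \widehat{\W}_{\text{even}}\widehat{\W}_{\text{odd}}$, and I will bound $\widehat{\W} r = \widehat{\W}_{\text{even}}(\widehat{\W}_{\text{odd}} r)$ by first computing $s \eqdef \widehat{\W}_{\text{odd}} r$ and then $\widehat{\W}_{\text{even}} s$. The first fact I would establish is that each phase acts block-locally: because under \hypref{asmp:algs:overlap} the odd blocks are pairwise disjoint and separated (so $J_a \cap J_{b,+} = \emptyset$ for distinct odd $a,b$), the factors of $\widehat{\W}_{\text{odd}}$ never overwrite the entries that a later factor reads. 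Hence $[\widehat{\W}_{\text{odd}} r]_i = [\widehat{W}^{J_k} r]_i$ whenever $i \in J_k$ with $k$ odd, and $[\widehat{\W}_{\text{odd}} r]_i = r_i$ when $i$ lies in no odd block; the same reduction holds for $\widehat{\W}_{\text{even}}$.

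The second ingredient is a purely geometric bookkeeping step, identifying the value of $r$ (and later $s$) at each position read by a block update. The three cases of $r$ partition $I$ into exclusive-even points (value $a$), exclusive-odd points (value $a'$), and overlap points lying in one odd and one even block (value $b$); here I use that, under \hypref{asmp:algs:overlap}, no index lies in more than two blocks. The key claim I must verify is that the boundary set $\partial J_k$ of an odd block consists of exclusive-even points (and, symmetrically, $\partial J_k$ of an even block consists of exclusive-odd points): indeed $\partial_- J_k = \min(J_k)-1$ lies in $J_{k-1}$ but, by the separation of $J_{k-2}$ from $J_k$, in no other block and not in the overlap $J_{k-1}\cap J_k$, hence it is exclusive to $J_{k-1}$; the argument for $\partial_+ J_k$ is identical.

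With these facts, the first pass is a direct calculation using the structure $\widehat{W}^{J_k}_{i,j} = W^{J_k}_{i,j} + \epsilon\,\I_{[i\in J_k,\, j\in J_{k,+}]}$ from \hypref{asmp:WJperturbed-1}. For $i \in J_k$ odd I split $[\widehat{W}^{J_k} r]_i$ into the boundary-column term $\sum_{j\in\partial J_k} W^{J_k}_{i,j} r_j = a\,(W^{J_k}_{i,\partial_- J_k} + W^{J_k}_{i,\partial_+ J_k})$ and the $\epsilon$-terms $\epsilon\sum_{j\in J_k} r_j + \epsilon\sum_{j\in\partial J_k} r_j$, bounding the former by $a\rate$ (if $i$ is exclusive, \ie $i\in J_{-k}^c$) or by $a\maxsumW$ (if $i$ is an overlap point), and the latter by $\epsilon(2a + L(a'\vee b)) = \epsilon c$ using $|J_k|\le L$ and the boundary values $a$. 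This yields $s_i \le a$ (exclusive even), $s_i \le a\rate + \epsilon c$ (exclusive odd), and $s_i \le a\maxsumW + \epsilon c$ (overlap). The second pass repeats the computation for even blocks applied to $s$: the boundary columns of an even block are now read at exclusive-odd positions, where $s \le a\rate + \epsilon c$, while the interior $\epsilon$-term is controlled via $s_j \le a(1\vee\maxsumW) + \epsilon c$ for $j\in J_k$. Collecting terms gives exactly the three claimed bounds, and setting $a=a'=b=1$ (so $c = L+2$) recovers \autoref{thm:parallelPgibbsGeneral}.

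The main obstacle I anticipate is not any single estimate but the disciplined bookkeeping: keeping straight, for each of the three target cases, which positions are read with which value, and ensuring the block-local reduction and the ``boundary-lies-in-neighbour's-exclusive-region'' classification are applied correctly at both passes. Once block-locality and the boundary classification are pinned down, the remainder is a mechanical accounting of the $\rate$, $\maxsumW$, and $\epsilon$ contributions.
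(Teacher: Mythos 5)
Your proposal is correct and takes essentially the same route as the paper's own proof: factor $\widehat{\W} = \widehat{\W}_{\text{even}}\widehat{\W}_{\text{odd}}$, reduce each phase to independent block-local updates via \hypref{asmp:algs:overlap}, note that boundaries of odd blocks lie in exclusive-even regions (and vice versa), and then carry out the same two-pass accounting of the $\rate$, $\maxsumW$ and $\epsilon$ contributions, arriving at the identical intermediate bound after the odd pass and the identical final bounds. The only difference is presentational: you spell out explicitly the block-locality and boundary-classification facts that the paper states tersely.
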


\begin{proof}
Note that $\widehat{\W}=\widehat{\W}_{\text{even}}\widehat{\W}_{\text{odd}}$ where
\begin{align*}
\widehat{\W}_{\text{odd}}&=\widehat{W}^{J_{m}}\cdots\widehat{W}^{J_{3}}\widehat{W}^{J_{1}} &
&\text{and} & \widehat{\W}_{\text{even}}&=\widehat{W}^{J_{m-1}}\cdots\widehat{W}^{J_{4}}\widehat{W}^{J_{2}} \eqsp.
\end{align*}
Let $r'=\widehat{\W}_{\text{odd}}r$ and $r''=\widehat{\W}_{\text{even}}r'$.
Recall that for any vector $s$, $\widehat{W}^{J_{k}}s$ differs from $s$
only in components indexed by $J_{k}$.
Thus, by \hypref{asmp:algs:ordered_blocks},~\hypref{asmp:algs:overlap},
 $\widehat{\W}_{\text{odd}}r$
can be studied by considering each term $\widehat{W}^{J_{k}}r$, $k$
odd, separately.  We have for $k$ odd and $i\in J_k$
\begin{align*}
[\widehat{W}^{J_{k}}r]_{i} & =(W_{i,\partial_{-}J_{k}}^{J_{k}}+\epsilon)r_{\partial_{-}J_{k}}+(W_{i,\partial_{+}J_{k}}^{J_{k}}+\epsilon)r_{\partial_{+}J_{k}}+\epsilon\sum_{j\in J_{k}}r_{j}\\
 & =(W_{i,\partial_{-}J_{k}}^{J_{k}}+W_{i,\partial_{+}J_{k}}^{J_{k}}+2\epsilon)a+\epsilon\sum_{j\in J_{k}}r_{j}\\
 & \leq(W_{i,\partial_{-}J_{k}}^{J_{k}}+W_{i,\partial_{+}J_{k}}^{J_{k}}+2\epsilon)a+\epsilon|J_{k}|(a'\vee b) \eqsp.
\end{align*}
The second line follows by the assumption on $r$ and the fact that
boundaries of an odd numbered block lie in the adjacent even blocks.
For $i\in J_{-k}^{c}$, $W_{i,\partial_{-}J_{k}}^{J_{k}}+W_{i,\partial_{+}J_{k}}^{J_{k}}\leq\rate$
and since $|J_{k}|\leq L$, 
\[
[\widehat{W}^{J_{k}}r]_{i}\leq\rate a+\epsilon\left(2a+L(a'\vee b)\right)  \eqsp.
\]
For $i\in J_{k}\cap J_{-k}$, $W_{i,\partial_{-}J_{k}}^{J_{k}}+W_{i,\partial_{+}J_{k}}^{J_{k}}\leq\maxsumW$
and thus
\[
[\widehat{W}^{J_{k}}r]_{i}\leq\maxsumW a+\epsilon\left(2a+L(a'\vee b)\right)  \eqsp.
\]
It follows that, $r'=\widehat{\W}_{\text{odd}}r$, 
\[
[r']_{i}\leq
\begin{cases}
  a, & i\in J_{-k}^{c}, \, k \text{ even}, \\
  \rate a+\epsilon\left(2a+L(a'\vee b)\right),  & i\in J_{-k}^{c}, \, k \text{ odd}, \\
  \maxsumW a+\epsilon\left(2a+L(a'\vee b)\right), & i\in J_{k}\cap J_{-k}, \, k \text{ even},
\end{cases}
\]

Similarly, $r''=\widehat{\W}_{\text{even}}r'$
can be studied by considering each term $\widehat{W}^{J_{k}}r'$, $k$
even, separately.
 Let $c=2a+L(a'\vee b)$.
For $k$ even and $i\in J_k$, 
\begin{align*}
  [\widehat{W}^{J_{k}}r']_{i} &
  =(W_{i,\partial_{-}J_{k}}^{J_{k}}+\epsilon)r'_{\partial_{-}J_{k}}
  +(W_{i,\partial_{+}J_{k}}^{J_{k}}+\epsilon)r'_{\partial_{+}J_{k}}+\epsilon\sum_{j\in J_{k}}r'_{j}\\
 & \leq(W_{i,\partial_{-}J_{k}}^{J_{k}}+W_{i,\partial_{+}J_{k}}^{J_{k}}+2\epsilon)
 (\rate a+\epsilon c)+\epsilon\sum_{j\in J_{k}} r'_{j}\\
 & \leq(W_{i,\partial_{-}J_{k}}^{J_{k}}+W_{i,\partial_{+}J_{k}}^{J_{k}}+2\epsilon)
 (\rate a+\epsilon c)+\epsilon|J_{k}|\left(a(1\vee\maxsumW)+\epsilon c\right) \eqsp.
\end{align*}
For $i\in  J_{-k}^{c}$, $W_{i,\partial_{-}J_{k}}^{J_{k}}+W_{i,\partial_{+}J_{k}}^{J_{k}}\leq\rate$
and thus,
\begin{align*}
[\widehat{W}^{J_{k}}r']_{i} & \leq\rate^{2}a+\epsilon\left(\rate c+2\rate a+2\epsilon c+La(1\vee\maxsumW)+\epsilon Lc\right) \eqsp.
\end{align*}
For $i\in J_{k}\cap J_{-k}$, $W_{i,\partial_{-}J_{k}}^{J_{k}}+W_{i,\partial_{+}J_{k}}^{J_{k}}\leq\maxsumW$
and thus,
\begin{align*}
[\widehat{W}^{J_{k}}r']_{i} & \leq\rate\maxsumW a+\epsilon\left(\maxsumW c+2\rate a+2\epsilon c+La(1\vee\maxsumW)+\epsilon Lc\right) \eqsp.
\end{align*}
In summary, $r''=\widehat{\W}_{\text{even}}r'$,
\[
[r'']_{i}\leq
\begin{cases}
  \rate^{2}a+\epsilon\left(\rate c+2\rate a+2\epsilon c+La(1\vee\maxsumW)+\epsilon Lc\right),
  & i\in J_{-k}^{c}, \, k \text{ even}, \\
  \rate a+\epsilon\left(2a+L(a'\vee b)\right),
  & i\in J_{-k}^{c}, \, k \text{ odd}, \\
  \rate\maxsumW a+\epsilon\left(\maxsumW c+2\rate a+2\epsilon c+La(1\vee\maxsumW)+\epsilon Lc\right),
  & i\in J_{k}\cap J_{-k}, \, k \text{ even}.
\end{cases}
\]

\end{proof}

\subsection{Proof of \autoref{thm:sequentialPgibbsGeneral}}
The proof is inductive. For any vector $r$, observe that $\widehat{W}^{J}r$
differs from $r$ only in components indexed by $J$. In particular,
for any $i \in J_1$, 
$[\widehat{W}^{J_{1}}\mathbf{1}]_{i}=W_{i,\partial_{+}J_{1}}^{J_{1}}+(|J_{1}|+1)\epsilon$.
Thus,
\begin{align*}
  [\widehat{W}^{J_{1}}\mathbf{1}]_{i}\leq
  \begin{cases}
    \rate+c\epsilon, & \text{for $i\in J_{-1}^{c}$}, \\
    \maxsumW+c\epsilon,  & \text{for $i\in J_{1}\cap J_{2}$}.
  \end{cases}
\end{align*}
We separately bound the terms for $i\in  J_{-1}^{c}$ and
$i\in J_{1}\cap J_{2}$ since
$W_{i,\partial_{+} J_{1}}^{J_{1}}$ can approach 1 as $i$ approaches the boundary
$\partial_{+} J_{1}$ (recall $\widehat{W}_{i,\partial_{+} J_{1}}^{J_{1}}=W_{i,\partial_{+} J_{1}}^{J_{1}}+\epsilon$ for $i \in J_1$).

Now assume 
\begin{align*}
[\widehat{W}^{J_{k-1}}\cdots\widehat{W}^{J_{1}}\mathbf{1}]_{i}\leq
\begin{cases}
  \rate+c\epsilon ,
  & \text{for  $i\in  J_{-(k-1)}^{c}$ (middle of block)}, \\ 
  \ratetwo+2c\epsilon  ,
 & \text{for $i\in J_{k-2}\cap J_{k-1}$ (left overlap)}, \\
 \maxsumW(\rate\vee1)+c\epsilon ,
 & \text{for $i\in J_{k-1}\cap J_{k}$ (right overlap)}.
\end{cases}
\end{align*}
We have
\begin{align}
\nonumber
[\widehat{W}^{J_{k}}\widehat{W}^{J_{k-1}}\cdots\widehat{W}^{J_{1}}\mathbf{1}]_{i} 
&=  \sum_{j\in J_{k}^{+}}\widehat{W}_{i,j}^{J_{k}}[\widehat{W}^{J_{k-1}}\cdots\widehat{W}^{J_{1}}\mathbf{1}]_{j} \\
\label{eq:proof-refined-lr-1}
&=  \sum_{j\in\partial J_{k}}(W_{i,j}^{J_{k}}+\epsilon)[\widehat{W}^{J_{k-1}}\cdots\widehat{W}^{J_{1}}\mathbf{1}]_{j}
  +\sum_{j\in J_{k}}\epsilon[\widehat{W}^{J_{k-1}}\cdots\widehat{W}^{J_{1}}\mathbf{1}]_{j} \eqsp.
\end{align}
The first term of the sum can be simplified to
\begin{align*}
\sum_{j\in\partial J_{k}}(W_{i,j}^{J_{k}}+{}&\epsilon)[\widehat{W}^{J_{k-1}}\cdots\widehat{W}^{J_{1}}\mathbf{1}]_{j}  \\
&= (W_{i,\partial_{-}J_{k}}^{J_{k}}+\epsilon)[\widehat{W}^{J_{k-1}}\cdots\widehat{W}^{J_{1}}\mathbf{1}]_{\partial_{-}J_{k}}
+(W_{i,\partial_{+}J_{k}}^{J_{k}}+\epsilon)[\widehat{W}^{J_{k-1}}\cdots\widehat{W}^{J_{1}}\mathbf{1}]_{\partial_{+}J_{k}} \\
&= (W_{i,\partial_{-}J_{k}}^{J_{k}}+\epsilon)(\rate+c\epsilon)+(W_{i,\partial_{+}J_{k}}^{J_{k}}+\epsilon) \\
&= W_{i,\partial_{-}J_{k}}^{J_{k}}\rate+W_{i,\partial_{+}J_{k}}^{J_{k}}+\epsilon(W_{i,\partial_{-}J_{k}}^{J_{k}}c+\rate+c\epsilon+1) \eqsp.
\end{align*}
The second term of \eqref{eq:proof-refined-lr-1} is
\begin{align*}
\nonumber
\sum_{j\in J_{k}}\epsilon[\widehat{W}^{J_{k-1}}\cdots\widehat{W}^{J_{1}}\mathbf{1}]_{j} &= \sum_{j\in J_{k-1}\cap J_{k}}\epsilon[\widehat{W}^{J_{k-1}}\cdots\widehat{W}^{J_{1}}\mathbf{1}]_{j}+\sum_{j\in J_{k-1}^{c}\cap J_{k}}\epsilon\\
\nonumber
 & \le |J_{k-1}\cap J_{k}|\epsilon(\maxsumW(\rate\vee1)+c\epsilon)+\epsilon|J_{k-1}^{c}\cap J_{k}|\\
 & \le \epsilon(\maxsumW(\rate\vee1)\vee1)|J_{k}|+c\epsilon^{2}|J_{k-1}\cap J_{k}| \eqsp.
\end{align*}
Thus 
\begin{align*}
[\widehat{W}^{J_{k}}\widehat{W}^{J_{k-1}}\cdots\widehat{W}^{J_{1}}\mathbf{1}]_{i} \leq {}&W_{i,\partial_{-}J_{k}}^{J_{k}}\rate+W_{i,\partial_{+}J_{k}}^{J_{k}}\\
&+\epsilon(W_{i,\partial_{-}J_{k}}^{J_{k}}c+\rate+c\epsilon+1
+(\maxsumW(\rate\vee1)\vee1)|J_{k}|+c\epsilon|J_{k-1}\cap J_{k}|) \eqsp.
\end{align*}
For $i\in J_{k}\cap J_{k+1}$, $W_{i,\partial_{-}J_{k}}^{J_{k}}\rate+W_{i,\partial_{+}J_{k}}^{J_{k}}\leq\maxsumW(\rate\vee1)$.
For $i\in  J_{-k}^{c},$ $W_{i,\partial_{-}J_{k}}^{J_{k}}\rate+W_{i,\partial_{+}J_{k}}^{J_{k}}\leq\rate$
by the definition of $\rate$. For $i\in J_{k-1}\cap J_{k},$
$W_{i,\partial_{-}J_{k}}^{J_{k}}\rate+W_{i,\partial_{+}J_{k}}^{J_{k}}\leq\ratetwo$
by the definition of $\ratetwo$. 

For $i\in J_{k}\cap J_{k-1}^{c},$ the coefficient of $\epsilon$
is by itself bounded by $c$ since
\[
\frac{\rate+1+(\maxsumW(\rate\vee1)\vee1)|J_{k}|}{1-W_{i,\partial_{-}J_{k}}^{J_{k}}-\epsilon-\epsilon|J_{k-1}\cap J_{k}|}\leq c \eqsp.
\]
For $i\in J_{k-1}\cap J_{k}$, the coefficient of $\epsilon$ is
\begin{align*}
 W_{i,\partial_{-}J_{k}}^{J_{k}}c
 {}&+\rate+c\epsilon+1+(\maxsumW(\rate\vee1)\vee1)|J_{k}|+c\epsilon|J_{k-1}\cap J_{k}|\\
 & \leq c+\rate+c\epsilon+1+(\maxsumW(\rate\vee1)\vee1)|J_{k}|+c\epsilon|J_{k-1}\cap J_{k}|)
  \leq2c \eqsp.
\end{align*}
\hfill $\qedsymbol$


\section{Additional Proofs} \label{app:proofs:additional}
\subsection{Proof of \autoref{prop:coupling-HMM}}

\begin{lemma}
\label{lem:h-coupling}
For each $i\in{1,\ldots,t}$, let $P_{i}(x_{i-1},\rmd x_{i})$ be a Markov
transition kernel on $\Xset$. For some integer $h>0$, assume that the
composite transition kernels $Q_{i}(x_{(i-1)h},\rmd x_{ih})=(P_{(i-1)h+1}\ldots P_{ih})(x_{(i-1)h},\rmd x_{ih})$
satisfy the following minorisation condition: there exists probability
measures $\nu_{i}(\rmd x)$ on $\Xset$ and a common constant $\alpha \in [0,1]$
such that $Q_{i}\geq\alpha\nu_{i}$. Then for the probability measures
(on the product space $\Xset^{t}$) $\prod_{i=1}^{t}P(x_{i-1}, \rmd x_{i})$
and $\prod_{i=1}^{t}P(y_{i-1},\rmd y_{i})$, and for any $x_{0}$ and $y_{0}$,
there exists a coupling $\Psi$ such that if $(X_{1:t},Y_{1:t})\sim\Psi$
then $\Pr(X_{i}\neq Y_{i})\leq(1-\alpha)^{\lfloor i/h\rfloor}$.
\end{lemma}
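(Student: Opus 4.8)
The plan is to construct $\Cpl$ as a minorisation (``splitting'') coupling that acts at the level of the $h$-blocks, followed by synchronous coupling once the two chains meet. The starting point is that $Q_i \ge \alpha\nu_i$ lets me split each composite kernel as $Q_i(x,\cdot) = \alpha\,\nu_i(\cdot) + (1-\alpha)R_i(x,\cdot)$, where $R_i(x,\cdot) \eqdef (1-\alpha)^{-1}\bigl(Q_i(x,\cdot) - \alpha\nu_i(\cdot)\bigr)$ is again a Markov kernel (the degenerate case $\alpha = 1$ forces coalescence in a single block and is immediate, so I assume $\alpha<1$). I would follow the two chains only at the block-boundary times $0,h,2h,\dots$, writing $U_i \eqdef X_{ih}$ and $V_i \eqdef Y_{ih}$ with $U_0 = x_0$, $V_0 = y_0$, and couple the boundary sequences as follows: while the chains have not yet met, at block $i$ I draw one variable $\xi_i \sim \mathrm{Bernoulli}(\alpha)$; if $\xi_i = 1$ I sample a single common endpoint $Z \sim \nu_i$ and set $U_i = V_i = Z$, while if $\xi_i = 0$ I sample $U_i \sim R_i(U_{i-1},\cdot)$ and $V_i \sim R_i(V_{i-1},\cdot)$ independently. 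Once the chains agree at a block boundary I drive them with identical randomness thereafter.

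The second step is to fill in the within-block states and verify the marginals. For each chain separately, given the block endpoints, I sample the interior states from the conditional (``bridge'') law that $P_{(i-1)h+1}\cdots P_{ih}$ induces given those two endpoints; after meeting, both chains use the same interior draws so their whole trajectories coincide. Marginal correctness then follows by induction over blocks: the endpoint $U_i$ is drawn from $\alpha\nu_i + (1-\alpha)R_i(U_{i-1},\cdot) = Q_i(U_{i-1},\cdot)$, and weighting the bridge by this mixture reconstitutes exactly the law of the block under $P_{(i-1)h+1}\cdots P_{ih}$, so the $X$-marginal of $\Cpl$ is $\prod_i P_i$ started at $x_0$, and similarly for $Y$. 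The bridges are well defined because $\alpha\nu_i \le Q_i(u,\cdot)$ and $(1-\alpha)R_i(u,\cdot) \le Q_i(u,\cdot)$, so $\nu_i$ and $R_i(u,\cdot)$ are absolutely continuous with respect to $Q_i(u,\cdot)$.

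It then remains to read off the estimate. Let $T \eqdef \min\{i\ge 1 : U_i = V_i\}$ be the index of the first coalescing block. Since an independent $\xi_i$ is used at every not-yet-coalesced block, $\Prb(T > k) = (1-\alpha)^k$. Synchronous coupling after meeting gives $X_j = Y_j$ for all $j \ge Th$, so for any time $i$, setting $k = \lfloor i/h\rfloor$ we have $Th \le kh \le i$ on the event $\{T \le k\}$, whence $\{X_i \neq Y_i\} \subseteq \{T > k\}$ and $\Prb(X_i \neq Y_i) \le (1-\alpha)^{\lfloor i/h\rfloor}$. An incomplete final block (when $h \nmid t$) needs no extra work, as its times inherit agreement from the last complete boundary.

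The step I expect to be the main obstacle is the marginal-consistency verification: the minorisation is assumed only for the composite $Q_i$ and not for the individual $P_j$, so coalescence can only be forced at block endpoints, and I must confirm --- through the bridge decomposition together with the absolute-continuity observations above --- that inserting a shared $\nu_i$-endpoint (or two independent residual endpoints) and then filling the interiors leaves the law of each chain undisturbed. The geometric decay of $\Prb(T>k)$ and the propagation of agreement across times are then routine.
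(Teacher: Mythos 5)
Your proposal is correct and follows essentially the same route as the paper's proof: a splitting (Bernoulli-mixture) coupling of the $h$-skeleton built from the minorisation $Q_i \geq \alpha\nu_i$, independent bridge filling of the within-block states before coalescence, synchronous coupling afterwards, and a geometric bound on the block coupling time giving $(1-\alpha)^{\lfloor i/h\rfloor}$. The only nitpick is that $\Prb(T>k)=(1-\alpha)^k$ should be the inequality $\Prb(T>k)\leq(1-\alpha)^k$ (the independent residual draws may coincide by chance), which is the direction you need anyway.
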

\begin{proof}
When $h=1$ the results follows standard arguments \citep{Lindvall:2002}
and is repeated here for the sake of completeness. 
The coupling $\Psi$ attempts to couple $(X_{1},Y_{1})$, followed
by $(X_{2},Y_{2})$ etc. Specifically, if $X_{i-1}=Y_{i-1},$ then
draw $X_{i}$ from $P_i(X_{i-1},\rmd x_{i})$ and set $Y_{i}=X_{i}$. If
$X_{i-1}\neq Y_{i-1},$ draw $(X_{i},Y_{i})$ from the measure
\[
\alpha \nu_i(\rmd x_{i})\delta_{x_{i}}(\rmd y_{i})+\left(1-\alpha\right)^{-1}\left(P_{i}(X_{i-1},\rmd x_{i})-\alpha \nu(\rmd x_{i})\right)\left(P_{i}(Y_{i-1},\rmd y_{i})-\alpha \nu(\rmd y_{i})\right)
\]
where $\delta_x$ is the atom measure.
It now follows that $\Pr(X_{i}\neq Y_{i})\leq(1-\alpha)\Pr(X_{i-1}\neq Y_{i-1})\leq(1-\alpha)^{i}.$

For $h>1$ we couple the skeleton process $(X_{h},Y_{h}),(X_{2h},Y_{2h}),\ldots$
using a similar scheme as for $h=1$. (Note that the $h$-skeleton
$X$-system has transition kernels $Q_{i}$ that satisfy a minorisation
condition which is to be used in the same manner as in the proof for $h=1$.) Let $i$ be the first instance that $X_{ih}=Y_{ih}$,
i.e. $X_{jh}\neq Y_{jh}$ for $j<i$. Then, simulate the future $X$-process,
$X_{k}$ for $k=ih+1,ih+2,\ldots$ from $P_{k}$ and set $Y_{k}=X_{k}$.
The non-skeleton terms $X_{k}$ and $Y_{k}$ for $k<ih$ are simulated
independently from their respective conditional laws.
Thus for $i\geq h$, $\Pr(X_{i}\neq Y_{i})\leq\Pr(X_{ h \lfloor i/h\rfloor}\neq Y_{h \lfloor i/h\rfloor})$
which is bounded above by $(1-\alpha)^{\lfloor i/h\rfloor}$.
\end{proof}

\begin{proof}(\autoref{prop:coupling-HMM})
For notational brevity we write $\rmd x$ instead of $\nu(\rmd x)$, where $\nu$ is the dominating probability measure
defined in \hypref{asmp:strong-mixing-m}.

Recall that under \hypref{asmp:algs:ordered_blocks}, $J$ is an interval and we can write $J = \crange{\si}{\ui}$.
To compute $W_{i,j}^J$ for $i \in J$ we use a coupling as in \eqref{eq:BCW:WJdefn}:
\begin{align} \label{eq:app:prop2:Wij}
  W_{i,j}^{J} \eqdef
  \!\!
  \sup_{
    {\scriptsize \begin{array}{cc}
        x,z\in \Xset^{\T} \\
        x_{-j}=z_{-j} 
      \end{array}}} 
  \!\!
  \Cpl_{j,x,z}^{J}(X'_i \neq Z'_i),
\end{align}
where $\Cpl_{j,x,z}^{J}$ is a coupling of $\phi_x^J$ and $\phi_z^J$ for $x_{-j} = z_{-j}$. We know from Lemma~\ref{lem:structure-W-general} that we only need to consider the cases $j = \si-1$ and $j = \ui+1$.
Consider first $j = \si-1$ (assuming $\si>1$).

Write the density of $\phi_{x}^{J}$ as $p(x_{s},\ldots,x_{u}|x_{s-1},x_{u+1},y_{s:u})=\prod_{i=s}^{u}p(x_{i}|x_{i-1},x_{u+1},y_{i:u})$,
which is a product of inhomogeneous Markov transition kernels. (A
similar expression follows if written backwards, i.e. $\prod_{i=s}^{u}p(x_{i}|x_{i+1},x_{s-1},y_{s:i})$.)

We show that the composite kernel formed by any $h$ (for $h$ defined
in \hypref{asmp:strong-mixing-m} and \hypref{asmp:strong-mixing-g}) successive kernels of the given inhomogeneous product,
i.e. \( \int\prod_{i=t+1}^{t+h}p(x_{i}|x_{i-1},x_{u+1},y_{i:u}) \rmd x_{t+1:t+h-1}, \)
satisfies a minorisation condition with respect to some probability
measure and the time-uniform constant $\delta^{\frac{1-h}{h}}\frac{\sigma_{-}}{\sigma_{+}}.$
Thus the coupling $\Psi_{j,x,z}^{J}$ can be defined as in \autoref{lem:h-coupling} to complete the proof.

Let $t \leq \ui-\h$ be some time index and consider
\begin{align}
  \label{eq:app:prop2:xth}
  p(x_{t+\h} \mid x_t, x_{\ui+1}, y_{t+1:\ui}) = \frac{ p(x_{\ui+1}, y_{t+\h:\ui} \mid x_{t+\h}) p(x_{t+\h} \mid x_t, y_{t+1:t+\h-1}) }{ \int  p(x_{\ui+1}, y_{t+\h:\ui} \mid x_{t+\h}) p(x_{t+\h} \mid x_t, y_{t+1:t+\h-1}) \rmd x_{t+\h} }.
\end{align}
We have
\begin{align*}
  p(x_{t+\h} \mid x_t, y_{t+1:t+\h-1}) = \frac{ \int\prod_{j=t+1}^{t+\h-1} \left \{ g(x_{j} , y_{j}) m(x_{j-1}, x_{j}) \right\} m(x_{t+\h-1}, x_{t+\h} ) \rmd x_{t+1:t+\h-1} }
    {\int\prod_{j=t+1}^{t+\h-1} \left\{ g(x_{j}, y_{j}) m(x_{j-1}, x_{j}) \right\} m(x_{t+\h-1}, x_{t+\h} ) \rmd x_{t+1:t+\h} },
\end{align*}
which implies, using \hypref{asmp:strong-mixing-m} and \hypref{asmp:strong-mixing-g},
\begin{align*}
  \delta^{\frac{ 1-\h}{\h} }\sigma_{-} \leq p(x_{t+\h} \mid x_t, y_{t+1:t+\h-1}) \leq \sigma_{+}.
\end{align*}
Plugging these two bounds into the numerator and denominator, respectively, of \eqref{eq:app:prop2:xth} we get
\begin{align}
  \label{eq:app:prop2:xth-bound}
  p(x_{t+\h} \mid x_t, x_{\ui+1}, y_{t+1:\ui}) \geq
  \delta^{\frac{ 1-\h}{\h} }\frac{ \sigma_{-}}{\sigma_{+} }
  \frac{ p(x_{\ui+1}, y_{t+\h:\ui} \mid x_{t+\h})  }{ \int  p(x_{\ui+1}, y_{t+\h:\ui} \mid x_{t+\h}) \rmd x_{t+\h}}.
\end{align}
The proof for $j = \ui+1$ follows analogously by considering a backward decomposition of
$p(x_{i} \mid x_{\si-1}, x_{\ui+1}, y_{\si:\ui})$ and minorising $p(x_{t-h} \mid x_t, x_{\si-1}, y_{\si:t-1})$.
\end{proof}

\subsection{Proof of \autoref{lem:pgibbs-ergodicity}}
\subsubsection{Part 1 -- Uniform minorisation of the blocked \pg kernel}
We consider the blocked \pg kernel for block $J = \crange{\si}{\ui}$, $Q_N^J$.
The following proof is a slight modification of the proof of Proposition~5 by \citet{LindstenDM:2015},
which amount to taking the conditioning on the end-point $x_{\ui+1}$ into account under
the $\h$-step forgetting condition \hypref{asmp:strong-mixing-m-minus}.
It is known from \citet[Theorem~1]{LindstenDM:2015} that, for any $x_{\extended{J}} \in \Xset^{\card{J}+2}$
and measurable $A\subset \Xset^{\card{J}}$,
\(
  \kernel{Q_N^J}{x_{\extended{J}}}{A} \geq (1-\epsilon(\Np, \card{J})) \phi_x^J(A),
\)
where
\begin{align*}
  \epsilon(\Np, \card{J}) &= 1-\prod_{t=\si}^\ui \frac{N-1}{2B_t^J + N -2}, \\
  B_t^J &= \left\{ \sup_{x_t} \frac{ p( y_{t:\ui}, x_{\ui+1} \mid x_t)}{p(y_{t:\ui}, x_{\ui+1} \mid x_{\si-1}, y_{\si:t-1})} \right\}
  \vee  \max_{0\leq\ell < \ui-t} \left\{ \sup_{x_t} \frac{ p( y_{t:t+\ell} \mid x_t)}{p(y_{t:t+\ell} \mid  x_{\si-1}, y_{\si:t-1})} \right\}.
\end{align*}
Assume \hypref{asmp:strong-mixing-m},\hypref{asmp:strong-mixing-m}.
From \citet[Proposition~5]{LindstenDM:2015} if follows that the second term in the definition of $B_t^J$
is bounded by $\delta \frac{\sigma_{+}}{\sigma_{-}}$. It remains to bound the first term, incorporating the
dependence on the boundary point $x_{\ui+1}$.

Consider first $t \leq \ui-\h+1$. We have for the numerator
\begin{multline*}
  p( y_{t:\ui}, x_{\ui+1} \mid x_t) 
  = \int \prod_{j=t}^{t+\h-1} \left(g(x_j, y_j) m(x_j, x_{j+1})\right)
  \prod_{j=t+\h}^{\ui} \left(g(x_j, y_j) m(x_j, x_{j+1})\right) \rmd x_{t+1:\ui} \\
  \leq \prod_{j=t}^{t+\h-1} \left\{ \sup_{x} g(x,y_j) \right\}
  \int \prod_{j=t}^{t+\h-1} m(x_j, x_{j+1})
  \prod_{j=t+\h}^{\ui} \left(g(x_j, y_j) m(x_j, x_{j+1})\right) \rmd x_{t+1:\ui} \\
  \leq \sigma_{+} \prod_{j=t}^{t+\h-1} \left\{ \sup_{x} g(x,y_j) \right\}
  \int  \prod_{j=t+\h}^{\ui} \left(g(x_j, y_j) m(x_j, x_{j+1})\right) \rmd x_{t+\h:\ui} \eqsp.
\end{multline*}
Analogously we get for the denominator
\begin{multline*}
  p(y_{t:\ui}, x_{\ui+1} \mid x_{\si-1}, y_{\si:t-1}) 
  = \int p(y_{t:\ui}, x_{\ui+1} \mid x_t) p(x_t \mid x_{\si-1}, y_{\si:t-1}) \rmd x_t \\
  \geq 
  \sigma_{-} \prod_{j=t}^{t+\h-1} \left\{ \inf_{x} g(x,y_j) \right\}
  \int \prod_{j=t+\h}^{\ui} \left(g(x_j, y_j) m(x_j, x_{j+1})\right) \rmd x_{t+\h:\ui} \eqsp.
\end{multline*}
It follows that for $t \leq \ui-\h+1$,
\begin{align*}
  \sup_{x_t} \frac{ p( y_{t:\ui}, x_{\ui+1} \mid x_t) }{ p(y_{t:\ui}, x_{\ui+1} \mid x_{\si-1}, y_{\si:t-1} ) }  \leq \delta \frac{\sigma_{+}}{\sigma_{-}}
\end{align*}
Next, consider the case $t > \ui-\h+1$. We have for the numerator
\begin{multline*}
  p( y_{t:\ui}, x_{\ui+1} \mid x_t) = \int \prod_{j=t}^\ui \left( g(x_j, y_j) m(x_j, x_{j+1}) \right) \rmd x_{t+1:\ui} \\
  \leq \prod_{j=t}^{\ui} \left\{ \sup_{x} g(x,y_j) \right\}
  \int \prod_{j=t}^{\ui} m(x_j, x_{j+1}) \rmd x_{t+1:\ui}
  \leq \sigma_{+} \prod_{j=t}^{\ui} \left\{ \sup_{x} g(x,y_j) \right\} \eqsp.
\end{multline*}
For the denominator we write
\begin{align*}
  p(y_{t:\ui}, x_{\ui+1} \mid x_{\si-1}, y_{\si:t-1}) 
  = \frac{ p(y_{\ui-\h+1:\ui}, x_{\ui+1} \mid x_{\si-1}, y_{\si:\ui-\h}) }{ p(y_{\ui-\h+1:t-1} \mid x_{\si-1}, y_{\si:\ui-\h})  } \eqsp.
\end{align*}
We have
\begin{multline*}
  p(y_{\ui-\h+1:\ui}, x_{\ui+1} \mid  x_{\si-1}, y_{\si:\ui-\h}) \\
  = \int \prod_{j=\ui-\h+1}^\ui \left( g(x_j, y_j) m(x_{j}, x_{j+1}) \right) p(x_{\ui-\h+1} \mid  x_{\si-1},  y_{\si:\ui-\h}) \rmd x_{\ui-\h+1:\ui} \\
  \geq \prod_{j=\ui-\h+1}^{\ui} \left\{ \inf_{x} g(x,y_j) \right\}
  \int \prod_{j=\ui-\h+1}^\ui  m(x_{j}, x_{j+1}) p(x_{\ui-\h+1} \mid  x_{\si-1}, y_{\si:\ui-\h}) \rmd x_{\ui-\h+1:\ui} \\
  \geq \sigma_{-} \prod_{j=\ui-\h+1}^{\ui} \left\{ \inf_{x} g(x,y_j) \right\} \eqsp,
\end{multline*}
and
\begin{multline*}
  p(y_{\ui-\h+1:t-1} \mid x_{\si-1}, y_{\si:\ui-\h}) \\=
  \int \prod_{j=\ui-\h+1}^{t-1} \left( g(x_j, y_j) m(x_{j}, x_{j+1}) \right) p(x_{\ui-\h+1} \mid x_{\si-1}, y_{\si:\ui-\h}) \rmd x_{\ui-\h+1:t} \\
  \leq \sigma_{+} \prod_{j=\ui-\h+1}^{t-1} \left\{ \sup_{x} g(x,y_j) \right\} \eqsp.
\end{multline*}
In summary we get
\begin{align*}
  \sup_{x_t} \frac{ p( y_{t:\ui}, x_{\ui+1} \mid x_t) }{ p(y_{t:\ui}, x_{\ui+1} \mid x_{\si-1}, y_{\si:t-1} ) }  \leq \delta \frac{\sigma_{+}}{\sigma_{-}}
\end{align*}
also for $t > \ui-\h+1$, and thus $B_t^J \leq \delta \frac{\sigma_{+}}{\sigma_{-}}$ for all $t \in J$. The result follows.
\hfill$\qedsymbol$

\subsubsection{Part 2 -- Wasserstein estimate for the blocked \pg kernel}
The statement is a corollary to the following lemma.

\begin{lemma}
  Let $P$ and $Q$ be two Markov kernels on $\Xset^\T$ and assume that there exists a constant $\epsilon \in [0,1]$ such that
  $Q(x,\rmd y) \geq (1-\epsilon) P(x, \rmd y)$ for all $x \in \Xset^\T$. 
  For $x,y\in\Xset^\T$ such that $x_{-j}=y_{-j}$, let $\Cpl_{j,x,y}$ be a coupling of $P(x, \cdot)$ and $P(y, \cdot)$ and let $W$ be the matrix (a Wasserstein matrix for~$P$) 
\[
    W_{i,j} =   \sup_{ \scriptsize \begin{array}{c} x, y \in \Xset^\T \\ x_{-j}=y_{-j}  \end{array}}  \Cpl_{j,x,y}(X'_i \neq Y'_i) 
    =  \sup_{ \scriptsize \begin{array}{c} x, y \in \Xset^\T \\ x_{-j}=y_{-j}  \end{array}} \int \Cpl_{j,x,y}(\rmd x', \rmd y') \I_{[x'_i \neq y'_i]}.
\]
Then, the matrix $\widehat W$ with $\widehat W_{i,j} = W_{i,j} + \epsilon$ is a Wasserstein matrix for $Q$.
\end{lemma}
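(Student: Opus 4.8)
The plan is to build an explicit coupling of $Q(x,\cdot)$ and $Q(y,\cdot)$ out of the given coupling $\Cpl_{j,x,y}$ of $P(x,\cdot)$ and $P(y,\cdot)$, and then invoke the same coupling-to-oscillation argument used in the proof of \autoref{lem:structure-W-general}. The starting point is the minorisation $Q(x,\cdot)\geq(1-\epsilon)P(x,\cdot)$, which lets me define, for $\epsilon\in(0,1]$, the \emph{residual kernel}
\[
R(x,\cdot)\eqdef \epsilon^{-1}\bigl(Q(x,\cdot)-(1-\epsilon)P(x,\cdot)\bigr)\eqsp.
\]
Because $Q$ and $P$ are probability measures and $Q\geq(1-\epsilon)P$, the measure $R(x,\cdot)$ is nonnegative and has total mass one, hence is itself a Markov kernel; this yields the decomposition $Q(x,\cdot)=(1-\epsilon)P(x,\cdot)+\epsilon R(x,\cdot)$. (The degenerate case $\epsilon=0$ forces $Q=P$, so that $\widehat W=W$ works trivially.)

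Next, for any $x,y$ with $x_{-j}=y_{-j}$ I would form the mixture coupling
\[
\widehat\Cpl_{j,x,y}\eqdef (1-\epsilon)\,\Cpl_{j,x,y}+\epsilon\,\bigl(R(x,\cdot)\coupling R(y,\cdot)\bigr)\eqsp,
\]
where the second term is the independent (product) coupling of $R(x,\cdot)$ and $R(y,\cdot)$. A direct check of marginals shows that the first marginal is $(1-\epsilon)P(x,\cdot)+\epsilon R(x,\cdot)=Q(x,\cdot)$ and, symmetrically, the second marginal is $Q(y,\cdot)$, so $\widehat\Cpl_{j,x,y}$ is a genuine coupling of $Q(x,\cdot)$ and $Q(y,\cdot)$.

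The key estimate is on the non-coupling probability of coordinate $i$. Splitting according to the two mixture components and bounding the product-coupling term crudely by $1$,
\begin{align*}
\widehat\Cpl_{j,x,y}(X'_i\neq Y'_i)
&=(1-\epsilon)\,\Cpl_{j,x,y}(X'_i\neq Y'_i)+\epsilon\,\bigl(R(x,\cdot)\coupling R(y,\cdot)\bigr)(X'_i\neq Y'_i)\\
&\leq (1-\epsilon)W_{i,j}+\epsilon\leq W_{i,j}+\epsilon=\widehat W_{i,j}\eqsp,
\end{align*}
where the first inequality uses the defining property $\Cpl_{j,x,y}(X'_i\neq Y'_i)\leq W_{i,j}$. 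It is precisely this crude bound of the product term by $1$, uniform in $i$ and $j$, that produces the uniform additive perturbation $+\epsilon$.

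Finally I would repeat verbatim the coupling argument from the proof of \autoref{lem:structure-W-general}: for $x_{-j}=y_{-j}$, writing $|Qf(x)-Qf(y)|\leq\sum_{i\in I}\osc_i(f)\,\widehat\Cpl_{j,x,y}(X'_i\neq Y'_i)$ via \eqref{eq:bkg:bound-osc}, substituting the estimate above, and taking the supremum over all such pairs gives $\osc_j(Qf)\leq\sum_{i\in I}\osc_i(f)\,\widehat W_{i,j}$, i.e.\ that $\widehat W$ is a Wasserstein matrix for $Q$. The only point requiring genuine care is the verification that $R$ is a bona fide probability kernel (nonnegativity from the minorisation, unit mass because both $P$ and $Q$ are Markov); everything else is a routine mixture/marginal computation, and I expect no real obstacle beyond keeping the product-coupling bound uniform so the perturbation stays a constant $\epsilon$ across all entries.
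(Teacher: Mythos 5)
Your proof is correct and follows essentially the same route as the paper's: the same residual decomposition $Q=(1-\epsilon)P+\epsilon R$, the same mixture coupling, and the same bound $(1-\epsilon)W_{i,j}+\epsilon\leq \widehat W_{i,j}$ (the paper merely uses an arbitrary coupling of the residuals where you take the product coupling, and leaves the final oscillation step implicit where you spell it out).
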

\begin{proof}
  For $\epsilon = 0$ the result is immediate. Hence, consider $\epsilon > 0$.
  For any $x,y \in \Xset^\T$, define $r_{x} = \frac{1}{\epsilon}(Q(x,\cdot) - (1-\epsilon)P(x,\cdot))$
  and let $R_{x,y}$ be a coupling of $r_{x}$ and $r_{y}$. It follows that, for any $x,y \in \Xset^\T$
  with $x_{-j} = y_{-j}$,  $(1-\epsilon) \Psi_{j,x,y} + \epsilon R_{x,y}$ is a coupling
  of $Q(x,\cdot)$ and $Q(y, \cdot)$. We have,
  \begin{align*}
    (1-\epsilon) \Psi_{j,x,y}(X'_i \neq Y'_i) + \epsilon R_{x,y}(X'_i \neq Y'_i) \leq (1-\epsilon) W_{i,j} + \epsilon \leq \widehat W_{i,j} \eqsp.
  \end{align*}
\end{proof}


\small
\bibliographystyle{chicago}
\bibliography{references}

\end{document}